\theoremstyle{plain}
\newtheorem{theorem}{Theorem}[section]
\newtheorem{corollary}{Corollary}[section]
\newtheorem{claimd}{Claim}[section]
\newtheorem{prop}{Proposition}[section]
\theoremstyle{definition}
\newtheorem{assumption}{Assumption}
\def\eps{{\varepsilon}}
\def\R{\mathbb R}
\def\N{\mathbb N}
\def\ds{\displaystyle}
\author{J\'er\^ome Coville\thanks{
UR 546 Biostatistique et Processus  Spatiaux, INRA, Domaine St Paul Site Agroparc, F-84000 Avignon, France, {\itshape email:
}\ttfamily jerome.coville@inra.fr} , Changfeng Gui \thanks{One USA Circle,  Department of Mathematics, University of Texas at San Antonio, USA  {\itshape  email:
}\ttfamily changfeng.gui@utsa.edu} , Mingfeng Zhao  \thanks{Center for Applied Mathematics, Tianjin University, Tianjin, 300072, China,  {\itshape  email:
}\ttfamily mingfeng.zhao@tju.edu.cn}
}
\begin{document}
\title{Propagation acceleration  in  reaction diffusion equations with anomalous diffusions
}

\maketitle


\begin{abstract}
In this paper, we are interested in the properties of solution of the nonlocal equation 
$$\begin{cases}
	u_t+(-\Delta)^su=f(u),\quad t>0, \ x\in\mathbb{R}\\
	u(0,x)=u_0(x),\quad x\in\mathbb{R}
\end{cases}
$$
where $0\le u_0<1$ is a Heaviside type function, $\Delta^s$ stands for the fractional Laplacian with $s\in (0,1)$, and $f\in C([0,1],\R^+)$ is a non negative nonlinearity such that $f(0)=f(1)=0$ and $f'(1)<0$.  In this context, it is known that the solution $u(t,s)$ converges locally uniformly to 1 and our aim here is to understand how fast this invasion process occur. When $f$ is a Fisher-KPP type nonlinearity and $s \in (0,1)$, it is known that the level set of the solution $u(t,x)$ moves at an exponential speed whereas when $f$ is of ignition type and $s\in \left(\frac{1}{2},1\right)$ then the  level set of the solution  moves at a constant  speed. \\
In this article, for general monostable nonlinearities $f$ and any $s\in (0,1)$ we derive generic estimates on the position of the level sets of the solution $u(t,x)$ which then  enable us to describe more precisely the behaviour of this invasion process. In particular, we obtain a algebraic generic  upper bound on the "speed" of level set highlighting the  delicate  interplay of $s$ and $f$ in the existence of an exponential acceleration process.   
When $s\in\left (0,\frac{1}{2}\right]$ and $f$ is of ignition type, we also complete the known description of the behaviour of $u$ and give a precise asymptotic of the speed of the level set in this context. Notably,  we prove that the level sets accelerate when $s\in\left(0,\frac{1}{2}\right)$ and that in the critical case $s=\frac{1}{2}$ although no travelling front can exist, the level sets  still move asymptotically at a constant speed. These new results are in sharp contrast with the bistable situation where no such acceleration  may occur, hightligting therefore the qualitative difference between the two type of nonlinearities.    
\end{abstract}
\tableofcontents

\section{Introduction}

The study of propagation phenomena is a classical topic in  analysis as its provide a robust way to understand some pattern formations that arises in a wide range of context ranging from population dynamics in ecology \cite{Fisher1937,Kolmogorov1937}, to combustion \cite{Kanel1961} and phase transition \cite{Aronson1978}. Concretely, this often leads  to analyse the asymptotic properties of  the solution  $u(t,x)$ of the parabolic problem used to model the phenomenon considered.     
When this model is a reaction diffusion equation, this lead then to the study of the properties of the solutions of   
\begin{equation}\label{rd}
\begin{cases}
\partial_t u(t,x)=\Delta u(t,x) +f(u(t,x)) \quad \text{ for  } t>0, x\in \R^N\\
u(0,x)=u_0(x)
\end{cases}
\end{equation}
with respect to the nonlinearity $f$ and the initial data $u_0$. 
In this particular situation,   when $f$ is a smooth bistable, ignition or monostable nonlinearity, say $f$ Lipschitz such that $f(0)=f(1)=0$ , $f'(1)<0$, it is known that the solution of the equation \eqref{rd} can exhibit some phase transition behaviour. More precisely, for a Heaviside type initial datum $u_0$ i.e. $u_0(x)=\mathds{1}_{ H_e}(x)$ where $H_e$ denotes a half-space $\{x\in \R^N\,|\, x\cdot e<0\}$, then the solution $u(t,x)$ of\eqref{rd} converges locally uniformly as $t\to +\infty$ to $1 $ and the ``invasion process'' resulting of this initial datum can be characterised by  particular solutions of \eqref{rd}  called  planar front $\varphi(x\cdot e-ct)$ \cite{Aronson1978,Fife1977,Kolmogorov1937,Weinberger1982}, where $(\varphi, c)$ solves here the following equations
\begin{equation}
\begin{cases}
\varphi''(z)+c\varphi'(z)+f(\varphi(z))=0 \quad \text{ for  }  z\in \R,\\
\ds{\lim_{z\to-\infty }\varphi(z)=1}, \\
\ds{\lim_{z\to+\infty }\varphi(z)= 0.}
\end{cases}
\end{equation} 
In particular, for any $\lambda \in (0,1)$ the superlevel set  $E_{\lambda}(t):=\{x\in \R^N|u(t,x)\ge \lambda\}$ grows at a constant speed. That is there exists $x^+(\lambda),x^-(\lambda)$ in $\R^N$ and a family of Half-space $H^+(t)$ defined by 
$$
H^+(t):=\{x\in \R^N\,|\, x\cdot e -ct \le 0\}
$$
 such that $E_\lambda$ satisfies  
$$x^-(\lambda)+H^+(t)\subset E_\lambda (t)\subset  x^+(\lambda)+H^+(t).$$

Thanks to the comparison principle satisfied by such semi-linear equations \eqref{rd}, clearly this particular  phase transition behaviour appears also for other type of initial data $u_0\ge \mathds{1}_{H_e}$ that have some decay as $x\cdot e \to -\infty$. For those initial data, we may then wonder if the above description of the behaviour of superlevel set  $E_\lambda$ still holds true and if not how can we characterise it.  
As shown in \cite{Alfaro2017a,Hamel2010,Kay2001,Needham1999,Sherratt2005}, the above characterisation may not hold in general and  in some situation an accelerated transition may occur. 
Indeed when $N=1$  and for a monostable $f$ of KPP type, that is $f\in C([0,1])$ such that $f(0)=f(1)=0$, $f>0$, $f'(0)>0,f'(1)<0$, and such that $f(s)\le f'(0)s$,   then for $u_0(x)>0$   F. Hamel and L. Roques have obtained in \cite{Hamel2010} a sharp description of the speed  of the level line of the  solution of the corresponding Cauchy problem. In particular, they show that when $u_0$ is such that $u_0(x)\sim \frac{1}{x^\alpha} $, as $x\to +\infty$, then 
 the level lines of the  solution  move exponentially fast. That is, for any $\lambda \in (0,1)$ there exists  points $x(t)\in E_\lambda(t)$ such that $x(t)\sim e^{f'(0)t}$. More generally, they prove that
\begin{theorem}
 Let $u_0$ be for regular  ($C^2$) nonincreasing initial data $u_0$ on some semi-infinite interval
$[\xi_0,+\infty)$ and such that $$\partial_{xx}u_0 (x) = o(u_0 (x))\quad \text{ as }\quad x \to +\infty.$$
Then, for any $\lambda \in (0, 1)$, $\eps \in (0, f'(0))$,
$\mu > 0$ and $\nu > 0$, there exists $T_{\lambda,\eps,\mu,\nu}\ge t_\lambda$ such that
 $$ \Gamma_\lambda(t) \subset u_0^{-1}([ \mu e^{-(f'(0)+\eps)t}, \nu e^{-(f'(0)-\eps)t}]), $$
 where $\Gamma_\lambda$ denotes $$\Gamma_\lambda(t):=\{x\in \R\, |\, u(t,x)=\lambda\} $$
\end{theorem}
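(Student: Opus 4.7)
The plan is to sandwich the $\lambda$-level set from both sides by constructing super- and sub-solutions of the separable form $w(t,x) = c\, u_0(x)\, e^{\beta t}$, with the parameters $c,\beta$ chosen differently for the two halves. The engine of the method is a one-line calculation: since $w_t - w_{xx} - \beta w = -c\, e^{\beta t}\, u_0''(x)$, the hypothesis $u_0''(x) = o(u_0(x))$ provides, for any tolerance $\eta > 0$, an $M_\eta \ge \xi_0$ on which $|u_0''| \le \eta u_0$, whence
\[
(\beta - \eta)\, w \;\le\; w_t - w_{xx} \;\le\; (\beta + \eta)\, w \quad\text{for } x \ge M_\eta.
\]
Combined with the KPP sandwich $f(s) \le f'(0)\, s$ (global) and $f(s) \ge (f'(0) - \eta)\, s$ (valid near $s=0$), this identity drives both inclusions.

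For the upper half of the sandwich, I would pick $\beta = f'(0) + \eps$ and $\eta < \eps$. Then $\bar w(t,x) := K\, u_0(x)\, e^{(f'(0)+\eps) t}$ satisfies $\bar w_t - \bar w_{xx} \ge f'(0) \bar w \ge f(\bar w)$ on $[M_\eta,\infty) \times [0,\infty)$; choosing $K := 1/u_0(M_\eta)$ forces $\bar w(0,\cdot) \ge u_0 = u(0,\cdot)$ and $\bar w(t, M_\eta) \ge 1 \ge u(t, M_\eta)$ for every $t\ge 0$, so the parabolic comparison principle on the half-line yields $u(t,x) \le K\, u_0(x)\, e^{(f'(0)+\eps)t}$ there. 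Setting $u(t,x) = \lambda$ gives $u_0(x) \ge (\lambda/K)\, e^{-(f'(0)+\eps) t}$; an arbitrary $\mu > 0$ is then absorbed by first running the argument with $\eps/2$ in place of $\eps$, so that the extra factor $e^{(\eps/2)t}$ swallows any fixed multiplicative constant for $t$ large.

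For the lower half I would proceed in two stages. \emph{Stage A (reaching an ignition level).} Fix $\delta_0 > 0$ small enough that $f(s) \ge (f'(0)-\eps/2)s$ on $[0,\delta_0]$. With $\beta = f'(0)-\eps$ and $\eta = \eps/2$, the truncated candidate
\[
\underline w(t,x) \;:=\; \min\bigl\{\, \kappa\, u_0(x)\, e^{(f'(0)-\eps) t},\ \delta_0\,\bigr\}, \qquad 0 < \kappa \le 1,
\]
is a (viscosity) subsolution of $u_t - u_{xx} = f(u)$ on $[M_\eta,\infty)\times[0,\infty)$: on its linear branch the master calculation gives $\underline w_t - \underline w_{xx} \le (f'(0)-\eps/2)\underline w \le f(\underline w)$, and on its flat branch $\delta_0$ is trivially a subsolution since $f \ge 0$. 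The initial ordering is immediate for $\kappa \le 1$, and at $x = M_\eta$ one takes $\kappa$ so small that $\underline w(t, M_\eta) \le u(t, M_\eta)$ for all $t \ge 0$, exploiting $\inf_{t\ge 0} u(t, M_\eta) > 0$ (strong maximum principle) together with the local uniform convergence $u(t,M_\eta) \to 1$. Comparison then delivers $u(t,x) \ge \delta_0$ whenever $u_0(x) \ge (\delta_0/\kappa)\, e^{-(f'(0)-\eps)t}$ and $t \ge T_2$, which already closes the proof if $\lambda \le \delta_0$.

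\emph{Stage B, the main obstacle.} When $\lambda > \delta_0$, I would push $u$ from $\delta_0$ up to $\lambda$ by comparing $u(t-\tau,\cdot)$ from below with the Heaviside-type datum $\delta_0\,\mathds{1}_{(-\infty,\, X(t-\tau)]}$, where $X(s) := u_0^{-1}\bigl((\delta_0/\kappa)\, e^{-(f'(0)-\eps) s}\bigr)$, and invoking the classical KPP/Aronson--Weinberger spreading result. For $\tau = \tau(\lambda,\delta_0)$ large enough, this gives $u(t,x) \ge \lambda$ on $(-\infty,\, X(t-\tau) + c^\star \tau]$ with $c^\star = 2\sqrt{f'(0)}$. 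The subtle point is that $X$ moves exponentially fast while the KPP shift $c^\star \tau$ is only $O(1)$, so a naive translation of the inclusion would suggest a multiplicative mismatch; the resolution relies crucially on $u_0''(x) = o(u_0(x))$, which forces $u_0$ to be slowly varying in the sense that $u_0(X(s)+c^\star \tau)/u_0(X(s)) \to 1$, hence the bounded shift only perturbs $u_0$ by a factor $1+o(1)$. The $\lambda$-level set therefore satisfies an inequality of the form $u_0(x) \ge C_\tau\, e^{-(f'(0)-\eps)t}$, and an arbitrary $\nu > 0$ is accommodated by repeating the whole lower-bound argument with $\eps/2$ in place of $\eps$: the extra factor $e^{(\eps/2)t}$ then swallows $C_\tau/\nu$ past the final threshold $T_{\lambda,\eps,\mu,\nu}$.
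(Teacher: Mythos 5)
This theorem is not proved in the paper at all: it is quoted verbatim in the Introduction as a result of Hamel and Roques~\cite{Hamel2010} concerning the \emph{local} KPP equation $\partial_t u = \Delta u + f(u)$, and is only used as motivating background for the fractional problem. There is therefore no internal proof to compare your attempt against, so I will only assess your sketch on its own merits, treating it as an attempted reconstruction of the Hamel--Roques argument.

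Your overall architecture (separable super/subsolutions of the form $c\,u_0(x)e^{\beta t}$, with the master identity $w_t-w_{xx}-\beta w=-c\,e^{\beta t}u_0''$ fed by $u_0''=o(u_0)$, then a two-stage sub\-solution: reach an ignition level $\delta_0$ by a truncated $\min\{\kappa u_0 e^{(f'(0)-\eps)t},\delta_0\}$, then push up to $\lambda$ by spreading) is indeed the right skeleton, and the upper bound half is essentially correct once you cap the candidate by $1$ (or extend $f$ by $0$ on $[1,\infty)$), since $\bar w=Ku_0e^{(f'(0)+\eps)t}$ leaves $[0,1]$ and the KPP inequality $f(s)\le f'(0)s$ is only asserted there. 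The $\eps/2$ trick to absorb the constants $\mu,\nu$ is standard and fine. One should also record that for $t\ge t_\lambda$ one already has $\Gamma_\lambda(t)\subset[M_\eta,\infty)$ so that the half-line comparisons actually cover the whole level set; this is the role of the threshold time $t_\lambda$ in the statement.

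The genuine soft spot is Stage~B, where the reasoning is confused even though the conclusion survives. You invoke the Aronson--Weinberger speed $c^\star=2\sqrt{f'(0)}$ and then worry about a ``multiplicative mismatch'' between the exponential motion of $X(s)=u_0^{-1}\bigl((\delta_0/\kappa)e^{-(f'(0)-\eps)s}\bigr)$ and the $O(\tau)$ KPP shift, which you try to repair by claiming $u_0(X(s)+c^\star\tau)/u_0(X(s))\to 1$ from ``slow variation'' of $u_0$. Neither the spreading speed nor the slow-variation claim is needed here. All you need from Stage~B is the fixed-location hair-trigger fact: the solution with datum $\delta_0\,\mathds{1}_{(-\infty,0]}$ exceeds $\lambda$ at $x=0$ after a \emph{fixed} time $\tau_{\lambda,\delta_0}$; by translation and monotonicity this yields $u(t,x)\ge\lambda$ for $x\le X(t-\tau_{\lambda,\delta_0})$, hence $\Gamma_\lambda(t)\subset(X(t-\tau_{\lambda,\delta_0}),\infty)$, and for $x$ there $u_0(x)\le u_0\bigl(X(t-\tau_{\lambda,\delta_0})\bigr)=(\delta_0/\kappa)e^{(f'(0)-\eps)\tau_{\lambda,\delta_0}}\,e^{-(f'(0)-\eps)t}$. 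Any extra rightward shift by $c\tau$ only \emph{decreases} $u_0(x)$ because $u_0$ is nonincreasing, so it helps rather than hurts; the slow-variation digression can be cut entirely. Relatedly, your closing sentence records ``$u_0(x)\ge C_\tau e^{-(f'(0)-\eps)t}$'' for the lower-bound half, but the sign should be $\le$ (this is the $\nu$ side of the bracket); as written, the inequality contradicts the intended conclusion. With those two repairs the sketch is sound.
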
 
 
 From this result, we can see the clear dependence  of the speed of the level sets of the solution $u(t,x)$ with respect to   the decay behaviour of $u_0$.
 Similar sharp descriptions of the speed of the level sets have been obtain for more general monostable type of nonlinearity, see for example \cite{Alfaro2017a,Kay2001,Needham1999,Sherratt2005}. On the other hand, thanks to the work of Fife and Mc Leod \cite{Fife1977}, and Alfaro \cite{Alfaro2017a} we can see that  accelerated transitions will never  occur when the non-linearity considered is bistable or of ignition type.


In this spirit, in this paper we are interested in propagation acceleration phenomena that  are caused  by  anomalous diffusions such as
super diffusions,   which plays important roles in various physical, chemical, biological and geological processes. (See, e.g.,
\cite{Volpert13} for a brief summary and references therein.)
A typical feature of such  anomalous diffusions is related to L\'evy  stochastic processes which may  possesses  discontinuous "jumps"  in their paths  and   have long range dispersal,  while the standard  diffusion is related to the Brownian motion.  Analytically,  certain   L\'evy processes ($\alpha$ stable) may  be modeled by their  infinitesimal generators which are  fractional Laplace operators $(-\Delta )^{s} u $ with $0<s<1$, whose Fourier transformation  $\widehat {(-\Delta
)^{s} u}  $ is $ (2\pi|\xi|)^{2s} \widehat{u}$. (See \cite{Metzler}.)

\noindent More precisely, we consider the following one-dimensional reaction-diffusion equation involving the fractional Laplacian:
\begin{eqnarray}\label{eqn:1-1}
	\left\{
	\begin{array}{l}
	u_t+(-\Delta)^su=f(u),\quad t>0, \ x\in\mathbb{R}\\
	u(0,x)=u_0(x),\quad x\in\mathbb{R}
	\end{array}
	\right.
\end{eqnarray}

\noindent where 
\begin{itemize}
	\item[(a)] $(-\Delta)^s$ ($0<s<1$) denotes the fractional Laplacian operator:
	\[ (-\Delta)^s u(x)=C_{1,s}\textnormal{P.V. }\int_{\mathbb{R}} \frac{u(x)-u(y)}{|x-y|^{1+2s}} dy,   \]
	
	\noindent where $C_{1,s}$ is a positive normalization constant in the sense that $\widehat{ (-\Delta)^s u}(\xi)=|\xi|^{2s}\widehat{u}(\xi)$. For simplicity, in the whole article, let's assume that $C_{1,s}=1$ after a suitable normalization.
	
	\item[(b)] $f$ is a nice function on $[0,1]$.
	
	\item[(c)] $u_0(x)$ is the initial condition.
	
\end{itemize}
The precise assumptions on $f$ and $u_0$ will be given later on.

Along with other types of nonlocal models (integrodifferential or integrodifference)  such nonlocal fractional  reaction diffusion model  \eqref{eqn:1-1} has received a lot of attention lately.  Contrary to the standard reaction diffusion equation \eqref{rd}, accelerated transitions can be observed for Heaviside type initial data \cite{Nec08, Mancinelli, Cabre2013, Engler, Gui2015} in the anomalous reaction diffusion systems. The mechanism that triggers the acceleration in this situation is then intrinsically  different  from that in the classical reaction diffusion and   seems governed  by  subtle interplay between the long range jumps in the diffusion processes and the strength of the pushes and pullings in the reaction part,  mathematically, i.e.,  the tails of the kernel and the properties of nonlinearity $f$ considered.   Namely, when $f$ is of bistable type then planar wave exists for all $s \in (0,1)$ \cite{Gui2015a, Palatucci2013} and the solution to \eqref{eqn:1-1} with a reasonable Heaviside  initial data $u_0$ will converge to a planar front, see \cite{Achleitner2015}. On the other hand, for the same initial data but for a $KPP$ type nonlinearity, the solution will accelerate exponentially fast \cite{Cabre2013, Engler}, that is, for $x(t)\in \Gamma(t)$ we have $x(t)\sim e^{f'(0)t}$.  

For more general monostable nonlinearities $f$, including those of ignition type,  the picture is less clear and only results  on the existence/ non-existence of planar front have been obtained. More precisely, when $f$ is an ignition nonlinearity then a planar front can only exist in the  range $s\in \left(\frac{1}{2},1\right)$ see \cite{Gui2015,Mellet2009}. Whereas for a general monostable nonlinearity $f$(i.e. $f(t)\sim t^p(1-t))$)  the existence of a planar front only occurs when $p>2$ and  in the range $s \in (\frac{p}{2(p-1)},1)$ see \cite{Gui2015}. In the later case, this suggest that  as in the KPP case, an accelerated transition will then occur for any $s \in (0,1)$ when $1<p<2$. A natural question is then, as in the KPP case,  does the level sets move with an exponential speed when $1<p<2$? 

One objective of this paper is to answer to this question and give a more detailed characterisation of the speed of the level set for general monostable nonlinearities $f$.



\subsection{Main Results}
Let us now describe more precisely the assumptions we made and state our main results. 
\begin{assumption}[Degenerate monostable nonlinearity]\label{ass:1-1}
	The nonlinearity $f: [0,1]\longrightarrow [0,\Vert f\Vert_\infty]$ is of class $C^1$, and is of the monostable type, in the sense that
    	\begin{eqnarray}\label{eqn:1-2}
    	f(0)=f(1)=0,\qquad f(u)>0,\quad\textnormal{for all $u\in(0,1)$}.
    	\end{eqnarray}
    	
   The steady state $0$ is degenerate, in the sense that, there exist some constants $r>0$ and $\beta>1$ such that
   \begin{eqnarray}\label{eqn:1-3}
   	f(u)\leq ru^\beta,\quad\textnormal{for all $u\in[0,1]$}.
   \end{eqnarray}
    	
    The steady state $1$ is stable, in the sense that
    \begin{eqnarray}\label{eqn:1-4}
    		f'(1)<0.
    \end{eqnarray}

\end{assumption}

\begin{assumption}[Front like initial datum]\label{ass:1-2}
	The initial data $u_0:\mathbb{R}\longrightarrow[0,1]$ is of class $C^1$ and satisfies
    \begin{itemize}
    	\item[(a)] $0\leq u_0(x)\leq1$ for all $x\in\mathbb{R}$.
    	
    	\item[(b)] $\displaystyle \liminf_{x\rightarrow-\infty} u_0(x)>0$.
    	
    	\item[(c)] $u_0(x)\equiv0$ on $[a,+\infty)$ for some $a$.
    \end{itemize}
	
\end{assumption}

Under this two assumptions, we first prove that
\begin{theorem}\label{thm:1-1}
	For any $0<s<1$, assume that the nonlinearity $f$ satisfies {\bf Assumption \ref{ass:1-1}}, and the initial data $u_0(x)$ satisfies {\bf Assumption \ref{ass:1-2}}. Let $u(t,x)$ be the solution to the problem \eqref{eqn:1-1} with the initial data $u_0(x)$, consider the superlevel set $E_\lambda(t)=\{ x\in\mathbb{R}| u(t,x)>\lambda  \}$ of the solution $u(t,x)$, and define
	\[ x_\lambda(t)=\sup E_\lambda(t).  \]
	
	\noindent If further assume that $\frac{\beta}{2s(\beta-1)}>1$, then for any $\lambda\in(0,1)$, there exist some constants $T_\lambda>0$ and $C(\lambda)>0$ such 
	\begin{eqnarray*}
		E_\lambda(t)\subseteq (-\infty, x_\lambda(t)),\quad \textnormal{and}\quad 	x_\lambda(t)\leq C(\lambda) t^{\frac{\beta}{2s(\beta-1)}  },\qquad \forall t>T_\lambda.
	\end{eqnarray*}

\end{theorem}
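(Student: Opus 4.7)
The strategy is to construct an explicit super-solution $\bar u$ dominating $u$ whose $\lambda$-level set lies at most at position $C(\lambda)t^{\beta/(2s(\beta-1))}$, and then to invoke the parabolic comparison principle valid for \eqref{eqn:1-1}.

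The rate $\mu := \beta/(2s(\beta-1))$ can be guessed by combining two ingredients. First, since $u_0$ vanishes on $[a,+\infty)$, the purely linear fractional heat evolution satisfies $(p_t \ast u_0)(x) \le Ct/(x-a)^{2s}$ for $x-a \ge t^{1/(2s)}$, thanks to the standard kernel bound $p_t(z) \le C\min\{t^{-1/(2s)},\,t|z|^{-1-2s}\}$. Second, the ODE $v' = rv^\beta$ started from a small $v_0$ reaches the level $\lambda$ only after time of order $v_0^{1-\beta}/(r(\beta-1))$. Equating this ODE time with $t$ and setting $v_0 = t/(x-a)^{2s}$ gives $x-a \sim t^{\beta/(2s(\beta-1))}$. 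The hypothesis $\mu > 1$ says precisely that this accelerated rate beats the purely linear rate $t^{1/(2s)}$, so the reaction is the mechanism that ultimately drives the front.

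To rigorize the heuristic, I would look for a super-solution of traveling-wave-like form
$$\bar u(t,x) = \min\{1,\ A\phi(x - \xi(t))\},$$
with moving front $\xi(t) = Ct^\mu$ and a decreasing profile $\phi$ (for instance $\phi(z) = (1+z)^{-q}$ or a refined piecewise construction). On the plateau $\{\bar u = 1\}$ the super-solution inequality reduces to $(-\Delta)^s \bar u \ge 0 = f(1)$ and holds automatically. On $\{\bar u < 1\}$, using $f(u) \le r u^\beta$, it becomes
$$-\phi'(z)\dot\xi(t) + (-\Delta)^s \bar u(x) \ge r A^\beta \phi(z)^\beta, \qquad z = x - \xi(t) > 0.$$
The leading negative contribution to $(-\Delta)^s \bar u(x)$ comes from the jump across $\{z=0\}$ and is of order $-1/(2sz^{2s})$, while the positive term $-\phi'(z)\dot\xi(t)$ grows because $\dot\xi(t) = C\mu t^{\mu-1} \to \infty$ under the hypothesis $\mu > 1$. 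The constants $A$ and $\xi(0)$ are then chosen large enough to ensure $\bar u(0,\cdot) \ge u_0$. Reading off the $\lambda$-level set of $\bar u$ yields $x_\lambda(t) \le \xi(t) + O(1) = O(t^\mu)$, which is the claimed bound.

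The main technical obstacle is the choice of $\phi$ and the uniform verification of the super-solution inequality for all $z > 0$: a pure power profile does not suffice, since the jump contribution to $(-\Delta)^s \bar u$ of order $z^{-2s}$ can dominate the ``reaction-friendly'' term $-\phi'(z)\dot\xi(t) \sim z^{-q-1}\dot\xi(t)$ in the far field (whenever $q+1 > 2s$, which is forced for $s \le 1/2$). One therefore needs a more refined ansatz, splicing $\phi$ to match the linear tail $\sim t/z^{2s}$ for very large $z$ while still dominating the reaction in the intermediate regime $z \lesssim \dot\xi(t)^{\text{something}}$. It is exactly here that the hypothesis $\beta/(2s(\beta-1)) > 1$ is used, as it provides enough room for the growing front speed $\dot\xi(t)$ to absorb both the fractional diffusion loss and the nonlinear reaction term.
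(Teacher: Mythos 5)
The heuristic is sound and you correctly identify the exponent $\mu=\beta/(2s(\beta-1))$ and the two competing mechanisms (fractional-heat tail vs.\ reaction-ODE blow-up time), but the proposed supersolution ansatz is not the one that works, and you leave the central difficulty unresolved rather than overcoming it. You look for a rigidly translating profile
$\bar u(t,x)=\min\{1,A\phi(x-\xi(t))\}$ with $\xi(t)=Ct^{\mu}$. As you observe yourself, with a power-law tail $\phi(z)\sim z^{-q}$ the jump contribution to $(-\Delta)^s\bar u$ is of order $-z^{-2s}$ while the helpful term $-\phi'(z)\dot\xi(t)\sim z^{-q-1}\dot\xi(t)$ decays faster in $z$ (for any $q>2s-1$, hence always when $s\le 1/2$), so for $z$ large the inequality fails no matter how you tune $A,C$. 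You then say ``one therefore needs a more refined ansatz, splicing $\phi$ to match the linear tail,'' but you do not produce it. That splicing is precisely the content of the theorem, so what remains is the whole proof, not a technicality.

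The paper sidesteps the problem by abandoning the travelling-profile form altogether. Its supersolution is
$m(t,x)=\min\bigl\{1,\,w(t,x)\bigr\}$ where $w$ solves the pointwise ODE $\partial_t w=\gamma w^{\beta}$ with initial data $v_0(x)\sim x^{-p}$ and $p=2s/\beta$. Three structural features make this close. First, $\partial_t m=\gamma w^{\beta}$ is \emph{everywhere} comparable to the reaction term $f(m)\le r w^{\beta}$, so there is no far-field region where the time derivative becomes negligible; the constant $\gamma$ can simply be taken large. Second, the exponent $p=2s/\beta$ is chosen so that the jump contribution from the plateau, which scales like $(x-x_0(t))^{-2s}\sim w(t,x)^{2s/p}$, is again of order $w^{\beta}$, i.e.\ the same order as the reaction and the time derivative (this is the paper's estimate of $I_1$). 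Third, the remaining pieces of $(-\Delta)^s m$ ($I_3,I_4,I_5$ in the paper) are bounded by $Cw^{\beta}$ or are nonpositive because $w(t,\cdot)$ is decreasing and convex. Because the profile \emph{deforms} rather than translates ($w(t,x)$ is not a function of $x-\xi(t)$), there is never a regime where the advection term is outmatched; the one condition $\tfrac{\beta}{2s(\beta-1)}>1$ enters only to ensure $q=p(\beta-1)<1$ and $2s-q>0$, which are used to control the principal-value integral. In short, your ansatz chooses the \emph{position} of a fixed profile to move, whereas the paper's ansatz lets the \emph{shape} evolve under the reaction ODE and picks the initial tail exponent $p$ so that all three terms of the supersolution inequality share the common scale $w^{\beta}$. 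If you want to complete your approach you would essentially have to rediscover the ODE-evolved profile (or something equivalent) and then still carry out the careful splitting of $(-\Delta)^s m$ near and away from the plateau boundary.
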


When $\beta>2$ and $\frac{\beta}{2s(\beta-1)}\leq1$, the existence of the traveling wave to the problem \eqref{eqn:1-1} provided $\frac{\beta}{2s(\beta-1)}\leq1$  was proved by Gui and Huan in \cite{Gui2015} meaning that for the solution $u(t,x)$ to \eqref{eqn:1-1} with some front-like data, if we look at the level set of $u(t,x)$, then the spatial variable $x$ may linearly depend on the time variable $t$. In this sense, our condition $\frac{\beta}{2s(\beta-1)}>1$ is sharp. In addition, we can observe from our results that when $\beta>1$ then the  level set of the solution  $u(t,x)$ to the equation \eqref{eqn:1-1}  moves at most at a polynomial rate i.e $\ds{x_\lambda(t)\sim t^\gamma}$ with $\ds{\gamma:=\sup\left\{1;\frac{1}{2s}+\frac{1}{\beta -1}\right\}}$.  These results  are in sharp contrasts with the results of Cabre et al. \cite{Cabre2013} for the KPP case. In particular, they highlight the fact  that the exponential acceleration is strongly thigh to the non-degeneracy of the nonlinearity  $f$ and only occur when $f$ is such that $f'(0)>0$, a situation that allows an exponential growth at low density.    
	

\begin{center}
	\begin{tikzpicture}[scale=0.65]
	\draw[line width=1pt,color=black,->] (-0.1,0)--(21,0) node[right]{$s$};
	
	\draw[line width=1pt,,color=black,->] (0,-0.1)--(0,11) node[right]{$\beta$};
	
	\draw[line width=1pt, color=yellow, domain=11:20, samples=1000,variable=\t] plot ({\t},{(-2*(\t/20)/(1-2*(\t/20))) });
	
	\draw[line width=1pt] (0,1)--(21,1) node[right]{};
	\draw[line width=1pt, color=cyan, style=dashed] (0,2)--(21,2) node[right]{};
	\draw[line width=1pt,style=dashed] (20,0)--(20,11);
	\draw[line width=1pt,style=dashed] (10,0)--(10,11);
	
	\node at (-0.5,-0.5){$0$};
	
	\node at (-0.5,1){$1$};
	\node at (-0.5,2){$2$};
	
	\node at (20,-0.5){$1$};
	\node at (10,-0.5){$\frac{1}{2}$};

	\node at (5,0.5){$x_\lambda(t) \sim e^{\rho t}$};
	\node at (5,1.5){\textcolor{red}{$x_\lambda(t) \le t^{\frac{1}{\beta-1}+\frac{1}{2s}}$}};
	\node at (5,5.5){\textcolor{red}{$x_\lambda(t) \le t^{\frac{1}{\beta-1}+\frac{1}{2s}}$}};
	
    \node at (15,0.5){$x_\lambda(t) \sim e^{\rho t}$};	
	\node at (15,1.5){\textcolor{red}{$x_\lambda(t) \le t^{\frac{1}{\beta-1}+\frac{1}{2s}}$}};
	\node at (12,3.25){\textcolor{red}{$x_\lambda(t) \le t^{\frac{1}{\beta-1}+\frac{1}{2s}}$}};
	\node at (15,7.5){$x_\lambda(t) \sim  t$};
	\end{tikzpicture}
\end{center}

Next, we prove a first lower bound of the speed of the level set. Namely, we show that
\begin{theorem}[A rough lower bound]\label{thm:1-2}
	For any $0<s<1$, assume that the nonlinearity $f$ satisfies $f(u)\geq0$ for all $u\in[0,1]$, and the initial data $u_0(x)$ satisfies {\bf Assumption \ref{ass:1-2}}. Let $u(t,x)$ be the solution to the problem \eqref{eqn:1-1} with the initial data $u_0(x)$, consider the superlevel set $E_\lambda(t)=\{ x\in\mathbb{R}| u(t,x)>\lambda  \}$ of the solution $u(t,x)$, and define
	\[ x_\lambda(t)=\sup E_\lambda(t).  \]
	
	 \noindent Then for any $\lambda\in(0,1)$, there exists some constants $T_\lambda'>0$ and $C'(\lambda)>0$ such 
	\begin{eqnarray*}
		x_\lambda(t)\geq C'(\lambda) t^{\frac{1}{2s}  },\qquad\forall t>T_\lambda'
	\end{eqnarray*}

\end{theorem}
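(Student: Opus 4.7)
The plan is to compare $u$ with pure fractional heat flow and read off the $t^{1/(2s)}$ scaling directly from the kernel. Since $f(u)\ge 0$ on $[0,1]$, the equation \eqref{eqn:1-1} gives $u_t+(-\Delta)^s u\ge 0$, so any $v$ solving the linear equation $v_t+(-\Delta)^s v=0$ with $v(0,\cdot)\le u_0$ satisfies $u(t,x)\ge v(t,x)$ by the parabolic comparison principle. Using Assumption \ref{ass:1-2}(b), fix $\eta\in(0,\liminf_{x\to-\infty}u_0(x))$ and $M>0$ such that $u_0\ge \eta$ on $(-\infty,-M]$, and take $v$ to be the fractional heat solution with initial datum $\eta\,\mathbf{1}_{(-\infty,-M]}$.

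The computation then exploits the self-similarity of the $2s$-stable heat kernel: $p_t(x)=t^{-1/(2s)}p_1(xt^{-1/(2s)})$, where $p_1$ is smooth, positive, even, of unit mass on $\R$, with tails decaying like $|x|^{-1-2s}$. Directly,
\[
v(t,x)=\eta\int_{-\infty}^{-M}p_t(x-y)\,dy=\eta\int_{(x+M)t^{-1/(2s)}}^{\infty}p_1(w)\,dw,
\]
so that evaluating at $x=Kt^{1/(2s)}$ for fixed $K>0$ and sending $t\to\infty$ kills the shift $Mt^{-1/(2s)}$, yielding
\[
\liminf_{t\to\infty}v\bigl(t,Kt^{1/(2s)}\bigr)\;\ge\;\eta\int_K^{\infty}p_1(w)\,dw.
\]

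Since $\int_0^\infty p_1=\tfrac12$, choosing $K=C'(\lambda)>0$ small enough makes the right-hand side exceed $\lambda$ (for $\lambda$ below a threshold determined by $\eta$), whence for all $t\ge T_\lambda'$ we have $u(t,C'(\lambda)t^{1/(2s)})\ge v(t,C'(\lambda)t^{1/(2s)})>\lambda$, giving $x_\lambda(t)\ge C'(\lambda)t^{1/(2s)}$. The only technical input is the classical pointwise scaling and positivity of the $2s$-stable kernel, which is why the bound is a \emph{rough} one. The delicate point is the admissible range of $\lambda$: a single comparison only reaches values up to approximately $\eta/2$, so to cover all $\lambda\in(0,1)$ one iterates the argument by restarting the equation at a time $t_1$ at which the pointwise lower bound has already been lifted above the previous floor on an interval of length $\gtrsim t_1^{1/(2s)}$, progressively raising the effective $\eta$ until the required $\lambda$ is attained. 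This iteration is the main — and really the only — step requiring care.
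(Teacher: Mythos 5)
The first half of your argument is correct and matches the paper's mechanism: you compare $u$ with the fractional heat semigroup started from a small step, and the $t^{1/(2s)}$ scaling is read off the $2s$-stable kernel. (The paper uses the two-sided algebraic bound on $p_s(t,x)$ rather than exact self-similarity, but that is cosmetic.) This gives $x_\lambda(t)\gtrsim t^{1/(2s)}$ for all $\lambda$ up to a fixed threshold $\lambda_0<\eta$, and so far the argument only needs $f\geq 0$.

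The genuine gap is the passage to larger $\lambda$. Your proposed iteration — ``restart at a time $t_1$ at which the pointwise lower bound has already been lifted above the previous floor, progressively raising the effective $\eta$'' — cannot work by linear comparison alone. The fractional heat semigroup strictly decreases the supremum: if $v(0,\cdot)\le\eta\,\mathbf{1}_{(-\infty,-M]}$ then $v(t,\cdot)<\eta$ for every $t>0$, so the ``floor'' supplied by comparison with the linear flow never rises, and with the bare hypothesis $f\geq 0$ (which permits $f\equiv 0$, in which case $u\equiv v$) there is nothing else pushing the solution up. In fact, with $f\equiv 0$ and $\liminf_{x\to-\infty}u_0<1$, the conclusion for $\lambda$ near $1$ is simply false, so the step you describe as ``the main — and really the only — step requiring care'' needs a nonlinear input you have not supplied. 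The paper handles this via a separate invasion result (its Proposition \ref{prop:level-set}): under the full monostable structure of Assumption \ref{ass:1-1} (with $f>0$ on $(0,1)$, $f'(1)<0$) and the front–nonexistence regime $\frac{\beta}{2s(\beta-1)}>1$, it shows $u(t,\cdot)\to 1$ uniformly on half-lines. This is proved by comparing from below with bistable fronts $U_\theta$ for approximations $g_\theta\le f$, invoking their known global asymptotic stability, and showing (Proposition \ref{prop:cinfinie}) that their speeds $c_\theta$ can be made arbitrarily large precisely because no monostable front exists in this regime. Once that invasion is established, the extension from $\lambda\le\lambda_0$ to all $\lambda\in(0,1)$ is a shift-in-time-and-space comparison against a solution started from a $\lambda_0$-step. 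This is the bulk of the proof, not a routine patch, and it is absent from your proposal.
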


  Combining the later with the upper bound obtained in  Theorem \ref{thm:1-2},  as a immediate corollary we then get      

\begin{corollary}\label{cor:1-1}
    For any $0<s<1$, assume that the nonlinearity $f$ satisfies {\bf Assumption \ref{ass:1-1}} and $f(u)\geq0$ for all $u\in[0,1]$, and the initial data $u_0(x)$ satisfies {\bf Assumption \ref{ass:1-2}}. Let $u(t,x)$ be the solution to the problem \eqref{eqn:1-1} with the initial data $u_0(x)$, consider the superlevel set $E_\lambda(t)=\{ x\in\mathbb{R}| u(t,x)>\lambda  \}$ of the solution $u(t,x)$, and define
		\[ x_\lambda(t)=\sup  E_\lambda(t).  \]
		
    \noindent If further assume that $\frac{\beta}{2s(\beta-1)}>1$, then for any $\lambda\in(0,1)$, there exists some constants $T_\lambda>0$, $C(\lambda)>0$ and $C'(\lambda)>0$ such 
		\begin{eqnarray*}
			C'(\lambda) t^{\frac{1}{2s}}\leq 	x_\lambda(t)\leq C(\lambda) t^{\frac{\beta}{2s(\beta-1)}  },\qquad \forall t>T_\lambda.
		\end{eqnarray*}

\end{corollary}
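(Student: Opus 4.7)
The plan is direct: Corollary 1.1 is an immediate consequence of concatenating the upper bound established in Theorem 1.1 with the lower bound established in Theorem 1.2, so the work consists entirely of checking that the hypotheses of both theorems hold under the assumptions of the corollary, and then taking a common threshold time.

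First, I would verify the upper bound. The corollary assumes Assumption 1.1 on $f$, Assumption 1.2 on $u_0$, and the condition $\frac{\beta}{2s(\beta-1)}>1$, which are exactly the hypotheses of Theorem 1.1. Applying that theorem yields constants $T_\lambda^{(1)}>0$ and $C(\lambda)>0$ such that
\[
x_\lambda(t)\leq C(\lambda)\,t^{\frac{\beta}{2s(\beta-1)}},\qquad \forall\, t>T_\lambda^{(1)}.
\]
Next, I would verify the lower bound. Theorem 1.2 only requires $f\geq 0$ on $[0,1]$ and Assumption 1.2, both of which are in force (note that Assumption 1.1 already forces $f\geq 0$, so the extra hypothesis in the corollary is redundant but harmless). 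Applying Theorem 1.2 gives $T_\lambda^{(2)}>0$ and $C'(\lambda)>0$ with
\[
x_\lambda(t)\geq C'(\lambda)\,t^{\frac{1}{2s}},\qquad \forall\, t>T_\lambda^{(2)}.
\]

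Setting $T_\lambda:=\max\{T_\lambda^{(1)},T_\lambda^{(2)}\}$, both inequalities hold for every $t>T_\lambda$, which is precisely the conclusion of the corollary. For consistency, I would note that the two exponents satisfy $\frac{1}{2s}<\frac{\beta}{2s(\beta-1)}$, since $\frac{\beta}{\beta-1}>1$ when $\beta>1$, so the upper and lower bounds do not contradict each other for large $t$. There is no genuine obstacle here — the proof is purely a bookkeeping step assembling the two previously stated theorems — so I would expect the write-up to be a single short paragraph once Theorems 1.1 and 1.2 are in hand.
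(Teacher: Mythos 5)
Your proposal is correct and matches the paper's own (one-line) argument: the paper states Corollary~\ref{cor:1-1} as an immediate consequence of combining the upper bound from Theorem~\ref{thm:1-1} with the lower bound from Theorem~\ref{thm:1-2}, exactly as you do. Your additional remarks about the redundancy of $f\ge 0$ under Assumption~\ref{ass:1-1} and the consistency check $\frac{1}{2s}<\frac{\beta}{2s(\beta-1)}$ are correct but not needed.
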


Although these first estimates on the speed seem rather crude this are still quite interesting, in particular in the case $0<s<\frac{1}{2}$, as they give a very  simple way of showing the non-existence of the traveling wave solution to the problem \eqref{eqn:1-1} with any general non negative function $f$ and in particular for the Fisher-KPP nonlinearity. These results also highlight a fundamental difference between nonlocal model versus local model when considering an ignition type nonlinearity. Indeed, when the nonlinearity $f$ is of  ignition type,  we  can easily  deduce from the work of  Alfaro \cite{Alfaro2017a} that accelerated transitions never occur  in the classical reaction diffusion model \eqref{rd} whereas  they do in the nonlocal reaction diffusion \eqref{eqn:1-1} when $s\in(0,1/2)$. This is also a clear evidence that in the nonlocal setting, unlike in the local setting (\eqref{rd}) the two types of nonlinearities: bistable and ignition type are not alike in the sense that the dynamic obtained are completely different. In this nonlocal setting,  a  condition on the decay of the tail of the kernel appears then of crucial importance  in order to guarantee the existence of traveling front. Namely, from our results we can see that when $f$ is non negative the kernel must satisfy some first moment integrability condition to expect to observe traveling front solutions.
This finite first moment condition suggests that a similar  condition should hold true  as well for convolution type nonlocal models studied in \cite{Coville2007d}  as the these two models shares many similarities. That is,  in such convolution type models, for a traveling front to exist the kernel need to satisfy a first moment condition.

\begin{center}
	\begin{tikzpicture}[scale=0.5]
	\draw[line width=1pt,color=black,->] (-0.1,0)--(21,0) node[right]{$s$};
	
	\draw[line width=1pt,,color=black,->] (0,-0.1)--(0,16) node[right]{$\beta$};
	
	\draw[line width=1pt, color=yellow, domain=10.7:20, samples=1000,variable=\t] plot ({\t},{(-2*(\t/20)/(1-2*(\t/20))) });
	
	\draw[line width=1pt] (0,1)--(21,1) node[right]{};
	\draw[line width=1pt, color=cyan, style=dashed] (0,2)--(21,2) node[right]{};
	\draw[line width=1pt,style=dashed] (20,0)--(20,16);
	\draw[line width=1pt,style=dashed] (10,0)--(10,16);
	
	\node at (-0.5,-0.5){$0$};
	
	\node at (-0.5,1){$1$};
	\node at (-0.5,2){$2$};
	
	\node at (20,-0.5){$1$};
	\node at (10,-0.5){$\frac{1}{2}$};

	\node at (5,0.5){$x_\lambda(t) \sim e^{\rho t}$};
	\node at (5,1.5){\textcolor{red}{$t^{\frac{1}{2s}}\le x_\lambda(t) \le t^{\frac{1}{\beta-1}+\frac{1}{2s}}$}};
	\node at (5,15){\textcolor{red}{$\text{ as }\quad \beta \to \infty, \quad x_\lambda(t)\approx t^{\frac{1}{2s}}$}};
	
    \node at (15,0.5){$x_\lambda(t) \sim e^{\rho t}$};	
	\node at (15,1.5){\textcolor{red}{$t^{\frac{1}{2s}}\le x_\lambda(t) \le t^{\frac{1}{\beta-1}+\frac{1}{2s}}$}};
	\node at (7.7,6.25){\textcolor{red}{$t^{\frac{1}{2s}}\le x_\lambda(t) \le t^{\frac{1}{\beta-1}+\frac{1}{2s}}$}};
	\node at (15,7.5){$x_\lambda(t) \sim  t$};
	\end{tikzpicture}
\end{center}

Let us look now more deeply at the consequences of these first estimates on the speed for the combustion model and for  supercritical fractional Laplacians (that is, $0<s<\frac{1}{2}$). 
In this situation, from the above estimate  we can in fact derive a sharp estimate on the speed of propagation. Namely, we show

\begin{corollary}[Combustion model for supercritical and critical fractional Laplacians]\label{cor:1-2}
	For any $0<s\leq \frac{1}{2}$, assume that the initial data $u_0(x)$ satisfies {\bf Assumption \ref{ass:1-2}}, and the nonlinearity $f$ is a combustion type nonlinearity, in the sense that there exists some $\theta\in(0,1)$ such that
	\begin{eqnarray}\label{eqn:1-5}
		f(1)=0=f(u),\quad\textnormal{for all $u\in[0,\theta]$},\qquad\textnormal{and}\qquad f(u)>0\quad\textnormal{for all $u\in(\theta,1)$}
	\end{eqnarray} 
	
	\noindent Let $u(t,x)$ be the solution to the problem \eqref{eqn:1-1} with the initial data $u_0(x)$, consider the superlevel set $E_\lambda(t)=\{ x\in\mathbb{R}| u(t,x)>\lambda  \}$ of the solution $u(t,x)$, and define
	\[ x_\lambda(t)=\sup E_\lambda(t).  \]
	
%
	\noindent Then for any $\eps>0$, and for any $\lambda\in(0,1)$, there exists some constants $T_{\lambda,\eps}>0$, $C(\lambda,\eps)>0$ and $C'(\lambda)>0$ such 
	\begin{eqnarray*}
		C'(\lambda) t^{\frac{1}{2s}}\leq 	x_\lambda(t)\leq C(\lambda,\eps) t^{\frac{1}{2s}+\eps  },\qquad \forall t>T_{\lambda,\eps}.
	\end{eqnarray*}
	
\end{corollary}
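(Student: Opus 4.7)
The lower bound $x_\lambda(t)\ge C'(\lambda)\,t^{1/(2s)}$ is nothing but Theorem \ref{thm:1-2} applied in the present setting, since a combustion nonlinearity is in particular non-negative. So all the substance lies in the upper bound, and the plan is to extract it from Theorem \ref{thm:1-1} by choosing the degeneracy exponent $\beta$ as large as we wish and then optimizing over it.

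The key observation is that, because $f$ is of combustion type, $f(u)\equiv 0$ on $[0,\theta]$. Consequently, for every $\beta>1$ one has the crude but effective bound
\[
f(u)\le r_\beta\,u^\beta \quad\text{for all } u\in[0,1], \qquad r_\beta:=\|f\|_\infty/\theta^\beta,
\]
since $f\equiv 0$ when $u\le\theta$ and $u^\beta/\theta^\beta\ge1$ when $u\ge\theta$. In other words, in the combustion case the exponent $\beta$ appearing in \eqref{eqn:1-3} is a free parameter. Given $\eps>0$, I would then choose $\beta:=1+\frac{1}{2s\eps}$ so that
\[
\frac{\beta}{2s(\beta-1)}=\frac{1}{2s}+\eps.
\]
For $s\in(0,\tfrac12]$ and $\beta>1$ the sharpness condition $\frac{\beta}{2s(\beta-1)}>1$ of Theorem \ref{thm:1-1} is automatic, since it reduces to $\beta(1-2s)+2s>0$.

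The only mismatch left is that a combustion $f$ is not strictly positive on $(0,1)$, so Assumption \ref{ass:1-1}(a) fails and Theorem \ref{thm:1-1} does not apply verbatim to $f$ itself. I would bridge this by perturbing: set
\[
\tilde f(u):=f(u)+\delta\,u^\beta(1-u)
\]
with a small $\delta>0$ chosen so that $\tilde f'(1)=f'(1)-\delta<0$. Then $\tilde f\ge f$ pointwise, $\tilde f>0$ on $(0,1)$, and $\tilde f(u)\le(r_\beta+\delta)u^\beta$, so $\tilde f$ fulfils Assumption \ref{ass:1-1} with the chosen $\beta$. Let $\tilde u$ be the solution of \eqref{eqn:1-1} with nonlinearity $\tilde f$ and the same initial datum $u_0$; by the parabolic comparison principle for the fractional heat equation, $u\le\tilde u$ on $(0,\infty)\times\mathbb{R}$, hence $x_\lambda(t)\le\tilde x_\lambda(t)$. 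Theorem \ref{thm:1-1} applied to $\tilde u$ then yields
\[
x_\lambda(t)\le\tilde x_\lambda(t)\le \tilde C(\lambda)\,t^{\frac{\beta}{2s(\beta-1)}}=\tilde C(\lambda)\,t^{\frac{1}{2s}+\eps},
\]
for all $t\ge T_{\lambda,\eps}$, which is the desired upper bound.

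The main point of care is really just the perturbation/comparison step, because the statement of the corollary does not presume that $f$ already lies within the scope of Assumption \ref{ass:1-1}; one has to exhibit an explicit dominating $\tilde f$ and invoke the comparison principle for \eqref{eqn:1-1}. There is no genuine analytic obstacle in the proof: the whole corollary amounts to an $\eps$-extraction ``at $\beta=\infty$'' from Theorems \ref{thm:1-1} and \ref{thm:1-2}, made possible by the flatness of combustion-type $f$ near $0$.
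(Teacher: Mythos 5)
Your proof is correct and follows the paper's own strategy: dominate $f$ by a degenerate monostable nonlinearity with a tunable degeneracy exponent $\beta$, invoke the comparison principle together with Theorem \ref{thm:1-1} for the upper bound and Theorem \ref{thm:1-2} for the lower one, and then push $\beta$ large to drive $\frac{\beta}{2s(\beta-1)}$ down to $\frac{1}{2s}+\eps$. You are in fact slightly more careful than the printed argument (which passes directly to $f_\beta(u)=C_0\,u^\beta(1-u)$): you make the $\beta$-dependence of the bounding constant $r_\beta=\|f\|_\infty/\theta^\beta$ explicit, and you repair the failure of strict positivity of the combustion $f$ on $(0,1)$ by the additive perturbation $\delta u^\beta(1-u)$, so that Assumption \ref{ass:1-1} is genuinely satisfied before Theorem \ref{thm:1-1} is applied.
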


The proof of this corollary is quite straightforward. Indeed,  the combustion model can be thought as some limit case of the degenerated monostable situation (i.e., $f(u)$ monostable with $f^{(k)}(0)=0$ for all $k\in \N$). In particular,   for any combustion nonlinearity $f$ we may find a constant $C_0>0$ such that for all $ \beta>1$ we have $$ f(u)\le f_\beta(u):= C_0u^\beta(1-u).$$
Recall that since we assume  that the fractional Laplacian is  either super-critical or critical (i.e.   $s\in (0,\frac{1}{2}]$) then we can check that for all $ \beta>1$ the condition below is satisfied
$$\frac{\beta}{2s(\beta -1)}=\frac{1}{2s}+\frac{1}{2s(\beta-1)}>1 $$ and then 
using a standard comparison argument and  {\bf Corollary \ref{cor:1-1}} we may deduce that for any $\beta>1$ there exists $C(\beta)$ and $T_\beta$ such that for all $t\ge T_\beta$
$$ x_\lambda(t)\le C(\lambda,\beta)t^{\frac{1}{2s}+\frac{1}{2s(\beta-1)}}$$
The results of  {\bf Corollary \ref{cor:1-2}}  follows then by picking $\beta$ so large that  we have $\frac{1}{2s(\beta-1)}\le \eps$.

	Note that  this  estimate is sharp in the sense it gives the right asymptotic for the speed of the level set i.e. we  get $x_\lambda(t)\sim t^{\frac{1}{2s}}$ as $t\to \infty$. It also provides a useful information for the critical case $s=\frac{1}{2}$, where we see that the level set moves asymptotically with a constant speed although there is no  existence of a traveling front in this situation.

	
Lastly, in the spirit of \cite{Alfaro2017}, let us obtain a finer lower bound on the speed for general degenerate monostable nonlinearities $f$, i.e. $\exists \, \beta\in (1,+\infty), \text{ such that } \lim_{u\to 0}\frac{f(u)}{u^\beta}=l>0$.  

\begin{theorem}[A finer lower bound]\label{thm:1-3}
	For any $0<s<1$, assume that the nonlinearity $f$ satisfies {\bf Assumption \ref{ass:1-1}} and $f(u)\geq r_1u^\beta$ as $u\rightarrow 0^+$ for some small $r_1>0$, and the initial data $u_0(x)$ satisfies {\bf Assumption \ref{ass:1-2}}. Let $u(t,x)$ be the solution to the problem \eqref{eqn:1-1} with the initial data $u_0(x)$, consider the superlevel set $E_\lambda(t)=\{ x\in\mathbb{R}| u(t,x)>\lambda  \}$ of the solution $u(t,x)$, and define
	\[ x_\lambda(t)=\sup  E_\lambda(t).  \]
	
	\noindent If further assume that $\frac{1}{2s(\beta-1)}>1$, then for any $\lambda\in(0,1)$, there exists some constants $T_\lambda>0$, $C(\lambda)>0$ and $C'(\lambda)>0$ such 
	\begin{eqnarray*}
		C'(\lambda) t^{\frac{1}{2s(\beta-1)}}\leq 	x_\lambda(t)\leq C(\lambda) t^{\frac{\beta}{2s(\beta-1)}  },\qquad \forall t>T_\lambda.
	\end{eqnarray*}

\end{theorem}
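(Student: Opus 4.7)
The upper bound $x_\lambda(t) \leq C(\lambda)\,t^{\beta/(2s(\beta-1))}$ follows immediately from Theorem~\ref{thm:1-1}, since $\frac{1}{2s(\beta-1)} > 1$ trivially implies $\frac{\beta}{2s(\beta-1)} > 1$. Moreover, the target lower bound $x_\lambda(t) \geq C'(\lambda)\, t^{\gamma}$ with $\gamma := \frac{1}{2s(\beta-1)}$ is substantive only for $\beta < 2$ (for $\beta \geq 2$ one has $\gamma \leq 1/(2s)$ and the conclusion already follows from Theorem~\ref{thm:1-2}). My plan is to construct a self-similar sub-solution of propagation scale $t^\gamma$ and then apply the comparison principle. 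The foothold comes from the linear part of \eqref{eqn:1-1}: by Assumption~\ref{ass:1-2}(b), $u_0 \geq c_0 \mathbf{1}_{(-\infty,-M]}$, and since $f \geq 0$ the comparison principle gives $u(t,\cdot) \geq p_t \ast u_0$ with $p_t$ the kernel of $\partial_t + (-\Delta)^s$. Combined with the two-sided estimate $p_t(z) \geq c\min\{t^{-1/(2s)},\, t/|z|^{1+2s}\}$, this yields $u(t,x) \geq c_1 t/(x+M)^{2s}$ for $x \gg t^{1/(2s)}$, together with the bulk estimate $u(t,x) \geq \delta_0$ on $|x| \leq c_2 t^{1/(2s)}$ from Theorem~\ref{thm:1-2}.

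I look for a sub-solution of the form $\underline u(t,x) = \eta\,\Psi(x/t^\gamma)$, where $\Psi$ is a fixed smooth even profile with $\Psi \equiv 1$ on $[-z_0,z_0]$ and $\Psi(z) \asymp |z|^{-\alpha}$ at infinity, and $\alpha := \frac{2s}{\beta-1}$. The key algebraic identity $\alpha(\beta-1) = 2s$ is chosen precisely to make $(-\Delta)^s\Psi(z) \sim C|z|^{-\alpha-2s}$ and $\Psi(z)^\beta \sim |z|^{-\alpha\beta}$ decay at the same rate on the tail, while its consequence $2s\gamma = \frac{1}{\beta-1}$ makes the diffusion coefficient $t^{-2s\gamma}$ vanish as $t\to\infty$. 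To match the initial data at time $T_0$, the amplitude is taken to depend on $T_0$: $\eta = \eta(T_0) := (c_1/z_0^{2s})\,T_0^{1-2s\gamma} = (c_1/z_0^{2s})\,T_0^{-(2-\beta)/(\beta-1)}$, which tends to $0$ as $T_0\to\infty$ in the relevant regime $\beta<2$. With these choices the sub-solution inequality
\[
-\frac{\gamma\eta}{t}\,z\Psi'(z) + \eta\,t^{-2s\gamma}(-\Delta)^s\Psi(z) \ \leq\ f(\eta\Psi(z))
\]
is to be verified for $t \geq T_0$: using $f(u) \geq r_1 u^\beta$ from Assumption~\ref{ass:1-1}, the two left-hand terms are dominated on the tail by $r_1\eta^\beta \Psi^\beta$ (by factors $1/t$ and $t^{-1/(\beta-1)}$ respectively), while on the plateau the inequality reduces to a constant condition met by taking $T_0$ large enough so that $\eta(T_0)$ is small.

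The initial comparison $\underline u(T_0,\cdot) \leq u(T_0,\cdot)$ is handled zone-by-zone: on the plateau $|x|\leq z_0 T_0^\gamma$ the choice of $\eta(T_0)$ ensures $\eta \leq c_1 T_0/(z_0 T_0^\gamma)^{2s}$, hence $\eta\Psi \leq \eta \leq u(T_0,x)$; on the sub-solution tail $|x|\geq z_0 T_0^\gamma$, the comparison $\eta\Psi(x/T_0^\gamma) \leq C\eta(T_0^\gamma/x)^\alpha \leq c_1 T_0/x^{2s}$ holds precisely because $\alpha > 2s$ (i.e.\ $\beta<2$). The comparison principle for \eqref{eqn:1-1} then gives $u(t,x) \geq \eta(T_0)\,\Psi(x/t^\gamma)$ for $t\geq T_0$, and in particular $u(t,\cdot) \geq \eta(T_0)$ on $|x|\leq z_0 t^\gamma$. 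A boost step on the plateau, comparing $u$ with a spatially homogeneous sub-solution satisfying $\dot v = f(v) \geq r_1 v^\beta$ and starting at $\eta(T_0)$, upgrades this level to any prescribed $\lambda \in (0,1)$ in additional time $\tau(T_0) \sim \eta(T_0)^{1-\beta} \sim T_0^{2-\beta}$. Since $2-\beta < 1$, $\tau(T_0) \ll T_0$, so $T := T_0 + \tau(T_0) \sim T_0$, giving $x_\lambda(T) \geq z_0 T_0^\gamma \sim z_0 T^\gamma$, i.e.\ the desired lower bound $x_\lambda(t) \geq C'(\lambda) t^{1/(2s(\beta-1))}$ for $t$ large.

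The principal obstacle is the uniform verification of the sub-solution inequality in the transition region of $\Psi$, where $(-\Delta)^s\Psi(z)$ may change sign and the reaction-to-diffusion balance is most delicate; handling this requires a carefully designed interpolation in $\Psi$, together with $T_0$ taken large enough to absorb the transition contribution into the reaction term. A secondary subtlety is promoting the pointwise ODE argument of the boost step into a genuine sub-solution statement for \eqref{eqn:1-1}, which introduces a bounded additive delay that must be shown not to affect the propagation exponent; this is expected to be standard once the main sub-solution is in place.
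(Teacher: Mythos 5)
Your plan shares Step one with the paper (a heat-kernel lower bound to gain an algebraic tail at a fixed time), and both the reduction of the upper bound to Theorem~\ref{thm:1-1} and the reduction of the lower bound to the range $1<\beta<2$ via Theorem~\ref{thm:1-2} are correct. But the heart of your argument — the self-similar sub-solution — takes a genuinely different route from the paper, and that route has a gap. The paper does \emph{not} use a self-similar profile: it builds $m(t,x)=g(w(t,x))$ with $g(y)=y(1-By)$ from the ODE $\partial_t w=\gamma w^\beta$, $w(0,\cdot)=v_0$. The structural payoff of that choice is that $\partial_t m=\gamma w^\beta(1-2Bw)^+\lesssim w^\beta$, so the time-derivative automatically has the same spatial decay as the reaction $f(m)\gtrsim r_1 m^\beta\sim w^\beta$ on the tail, and the whole verification reduces to bounding $(-\Delta)^s m$ (done by a three-region PV decomposition, treated separately for $s>1/2$ and $s\le 1/2$). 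Your ansatz lacks this alignment, and that is where it breaks.

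Concretely, in the rescaled inequality
\begin{equation*}
-\frac{\gamma\eta}{t}\,z\Psi'(z)+\eta\,t^{-2s\gamma}(-\Delta)^s\Psi(z)\ \leq\ f(\eta\Psi(z)),
\end{equation*}
you correctly arrange $\alpha=\tfrac{2s}{\beta-1}$ so that $\Psi(z)^\beta$ and $(-\Delta)^s\Psi(z)$ both decay like $|z|^{-\alpha-2s}=|z|^{-\alpha\beta}$. But the transport term decays only like $|z|^{-\alpha}$, which is strictly slower since $\beta>1$. Since $\Psi$ is nonincreasing, $-z\Psi'(z)\ge 0$ on both sides, so the transport term is nonnegative and must genuinely be absorbed. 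With $\Psi(z)\sim c|z|^{-\alpha}$,
\begin{equation*}
\frac{\gamma\eta\,|z\Psi'(z)|/t}{r_1\eta^\beta\Psi(z)^\beta}\ \sim\ \frac{\gamma\alpha}{r_1 c^{\beta-1}\eta^{\beta-1}}\cdot\frac{|z|^{2s}}{t}\ \longrightarrow\ \infty\qquad\text{as }|z|\to\infty
\end{equation*}
for each fixed $t$. So the statement that the transport term is ``dominated on the tail by $r_1\eta^\beta\Psi^\beta$ by a factor $1/t$'' holds only for $|z|\lesssim(\eta^{\beta-1}t)^{1/(2s)}$; beyond that the sub-solution inequality fails, and the comparison principle cannot be invoked. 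The diffusion term cannot rescue this either: even if $(-\Delta)^s\Psi(z)\le 0$ far out, its magnitude $\sim\eta t^{-2s\gamma}|z|^{-\alpha-2s}$ compared to the transport gives a ratio $t^{1-2s\gamma}|z|^{-2s}\to 0$ (since $1-2s\gamma<0$ for $\beta<2$). This is not a presentational issue you can wave away: any ansatz of the form $a(t)\Psi(x/R(t))$ with decreasing $\Psi\sim|z|^{-\alpha}$ and $\alpha(\beta-1)=2s$ runs into the same obstruction, because $|z\Psi'|$ and $\Psi^\beta$ have incompatible tails. Patching it with a time-dependent cutoff of $\Psi$ reintroduces singular edge contributions to $(-\Delta)^s$ that would need their own estimate — which is, in effect, the content of the paper's three-region Claims \ref{subsolcla1}--\ref{subsolcla3}.

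A secondary gap, which you acknowledge, is the boost step: a constant sub-solution for $\dot v=f(v)$ cannot be initially compared with $u(T_0,\cdot)$ because $u(T_0,x)\to 0$ as $x\to+\infty$. The paper handles this by the invasion Proposition~\ref{prop:level-set} together with a translation argument (Section~\ref{sec-l}); you would need to carry out that construction explicitly rather than treat it as a routine delay.
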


	Notice that $\frac{1}{2s}<\frac{1}{2s(\beta-1)}$ if and only if $1<\beta<2$. Hence when $\frac{1}{2}<s<1$ and $1<\beta<2$, the lower bound in {\bf  Theorem \ref{thm:1-3}} is better than the one in {\bf Theorem \ref{thm:1-2}}. From these estimates we can then deduce the following generic estimate:

	\begin{theorem}[generic bound]\label{thm:1-4}
	For any $0<s<1$, assume that the nonlinearity $f$ satisfies {\bf Assumption \ref{ass:1-1}} and $f(u)\geq r_1u^\beta$ as $u\rightarrow 0^+$ for some small $r_1>0$, and the initial data $u_0(x)$ satisfies {\bf Assumption \ref{ass:1-2}}. Let $u(t,x)$ be the solution to the problem \eqref{eqn:1-1} with the initial data $u_0(x)$, consider the superlevel set $E_\lambda(t)=\{ x\in\mathbb{R}| u(t,x)>\lambda  \}$ of the solution $u(t,x)$, and define
	\[ x_\lambda(t)=\sup  E_\lambda(t).  \]
	
	\noindent Then for any $\lambda\in(0,1)$, there exists some constants $T_\lambda>0$, $C(\lambda)>0$ and $C'(\lambda)>0$ such 
	\begin{eqnarray*}
		C'(\lambda) t^{\sup\left\{\frac{1}{2s(\beta-1)};\frac{1}{2s}\right\}}\leq 	x_\lambda(t)\leq C(\lambda) t^{\frac{1}{2s(\beta-1)} +\frac{1}{2s} },\qquad \forall t>T_\lambda.
	\end{eqnarray*}

\end{theorem}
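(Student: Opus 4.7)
The plan is to deduce this generic statement as a direct combination of the three preceding theorems, after a small algebraic simplification of the exponents. The key observation is the identity
$$\frac{\beta}{2s(\beta-1)} \;=\; \frac{1}{2s(\beta-1)} + \frac{1}{2s},$$
which shows that the upper bound exponent appearing in Theorem \ref{thm:1-4} is exactly the one produced by Theorem \ref{thm:1-1}.

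For the upper bound, I will first invoke Theorem \ref{thm:1-1} in the regime $\beta/(2s(\beta-1))>1$, which is automatic as soon as $s\le 1/2$ and covers most of the parameter space when $s>1/2$ as well. In the complementary regime the Gui--Huan planar traveling front from \cite{Gui2015} together with a standard comparison argument for front-like data yields a linear estimate $x_\lambda(t)\le ct+O(1)$, which on that sub-range is not worse than the polynomial bound claimed in Theorem \ref{thm:1-4} and can therefore be absorbed into the constant $C(\lambda)$.

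For the lower bound, I will simply combine Theorems \ref{thm:1-2} and \ref{thm:1-3}. Theorem \ref{thm:1-2} always gives $x_\lambda(t)\ge C_1'(\lambda)\,t^{1/(2s)}$, and Theorem \ref{thm:1-3} gives $x_\lambda(t)\ge C_2'(\lambda)\,t^{1/(2s(\beta-1))}$ whenever $1/(2s(\beta-1))>1$. Taking $C'(\lambda):=\min\{C_1'(\lambda),C_2'(\lambda)\}$ and combining these in each sub-regime where both estimates are available produces the claimed lower bound $t^{\sup\{1/(2s),\,1/(2s(\beta-1))\}}$.

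The main obstacle I anticipate is bookkeeping rather than new analysis: verifying that every pair $(s,\beta)\in(0,1)\times(1,+\infty)$ is indeed covered by the patchwork above. The delicate strip is $s>1/2$ with $1+1/(2s)\le \beta<2$, where the sup equals $1/(2s(\beta-1))\le 1$ yet Theorem \ref{thm:1-3} is not literally available; on this strip the desired lower bound is at most linear, and I expect it to follow either from the traveling-front construction of \cite{Gui2015} (when in addition $\beta\ge 2s/(2s-1)$) or from a mild refinement of the self-similar sub-solution used in the proof of Theorem \ref{thm:1-3}, applied on a suitably truncated time window so as to restore $1/(2s(\beta-1))>1$ after replacing $\beta$ by some $\beta'<\beta$ arbitrarily close to it.
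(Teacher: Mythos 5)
Your core idea is exactly the paper's: Theorem \ref{thm:1-4} is presented there with no separate proof, only the remark ``From these estimates we can then deduce the following generic estimate,'' i.e.\ the upper bound is read off from Theorem \ref{thm:1-1} using the identity $\frac{\beta}{2s(\beta-1)}=\frac{1}{2s}+\frac{1}{2s(\beta-1)}$, and the lower bound is the better of the two given by Theorems \ref{thm:1-2} and \ref{thm:1-3} (the paper itself points out that $\frac{1}{2s(\beta-1)}>\frac{1}{2s}$ iff $\beta<2$). So your proposal coincides with the authors' reasoning. You also go further and correctly notice that the naive combination leaves uncovered parameter regions; in particular the strip $s>1/2$, $1+\frac{1}{2s}\le\beta<2$ (equivalently $\beta\in(3/2,2)$ and $s\ge\frac{1}{2(\beta-1)}$), where the claimed exponent $\sup\{\frac{1}{2s(\beta-1)},\frac{1}{2s}\}$ equals $\frac{1}{2s(\beta-1)}\in(\frac{1}{2s},1]$ yet the hypothesis $\frac{1}{2s(\beta-1)}>1$ of Theorem \ref{thm:1-3} fails, and Theorem \ref{thm:1-2} only yields the strictly weaker exponent $\frac{1}{2s}$. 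The paper does not address this strip at all, so your bookkeeping is actually more scrupulous than theirs; the theorem is most naturally read as implicitly carrying the standing hypotheses of the preceding theorems.

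Your two proposed patches for the strip do not, however, close the gap. The traveling-front alternative is vacuous there: the condition $\beta\ge\frac{2s}{2s-1}$ forces $\beta\ge 2$ (since $\frac{2s}{2s-1}\ge 2$ for all $s\in(1/2,1)$), which is disjoint from $\beta<2$, and in any case \cite{Gui2015} only produces fronts for $\beta>2$. The replacement of $\beta$ by a nearby $\beta'<\beta$ also fails, because $f(u)\ge r_1u^{\beta}$ near $0$ does \emph{not} imply $f(u)\ge r_1'u^{\beta'}$ for $\beta'<\beta$ (the inequality goes the wrong way as $u\to 0^+$), so the subsolution of Theorem \ref{thm:1-3} cannot be run with a smaller exponent. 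Finally, one small separate slip: in the complementary regime $\frac{\beta}{2s(\beta-1)}\le 1$ you say that the linear estimate $x_\lambda(t)\le ct+O(1)$ is ``not worse than'' the polynomial bound and can be absorbed into the constant, but a linear upper bound cannot be absorbed into a sublinear one ($ct\not\le Ct^{\gamma}$ for $\gamma<1$ and $t$ large); in that regime the stated upper bound of Theorem \ref{thm:1-4} is simply not implied, which is one more reason to read the theorem with the implicit restriction $\frac{\beta}{2s(\beta-1)}>1$.
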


This last results clearly indicate that the speed of the level sets is the result of a fine interplay between the diffusion process intimately linked  to the quantity $t^1/2s$ and the reaction term $f$ which, as we will see in the proof, is strongly linked to the quantity $t^{\frac{1}{2s(\beta-1)}}$.

\begin{center}
	\begin{tikzpicture}[scale=0.45]
	\draw[line width=1pt,color=black,->] (-0.1,0)--(21,0) node[right]{$s$};
	
	\draw[line width=1pt,,color=black,->] (0,-0.1)--(0,16) node[right]{$\beta$};
	
	\draw[line width=1pt, color=yellow, domain=13.1:20, samples=1000,variable=\t] plot ({\t},{4*(1- (1/(1-2*(\t/20)))) });
	\draw[line width=1pt, color=yellow, style=dashed, domain=3.1:10, samples=1000,variable=\t] plot ({\t},{4*(1+1/(2*(\t/20))) });
	\draw[line width=1pt, color=yellow, domain=10:20, samples=1000,variable=\t] plot ({\t},{4*(1+1/(2*(\t/20))) });
	\draw[line width=1pt] (0,4)--(21,4) node[right]{};
	\draw[line width=1pt, color=yellow] (0,8)--(10,8) node[right]{};
	\draw[line width=1pt, color=cyan, style=dashed] (10,8)--(21,8) node[right]{};
	\draw[line width=1pt,style=dashed] (20,0)--(20,16);
	\draw[line width=1pt,style=dashed] (10,0)--(10,16);
	
	\node at (-0.5,-0.5){$0$};
	
	\node at (-0.5,4){$1$};
	\node at (-0.5,8){$2$};
	
	\node at (20,-0.5){$1$};
	\node at (10,-0.5){$\frac{1}{2}$};

	\node at (5,2){$x_\lambda(t) \sim e^{\rho t}$};
	\node at (5,5.5){\textcolor{red}{$t^{\frac{1}{2s(\beta-1)}}\le x_\lambda(t) \le t^{\frac{1}{\beta-1}+\frac{1}{2s}}$}};
	
    \node at (15,2){$x_\lambda(t) \sim e^{\rho t}$};	
	\node at (15,5.5){\textcolor{red}{$t^{\frac{1}{2s(\beta -1)}}\le x_\lambda(t) \le t^{\frac{1}{\beta-1}+\frac{1}{2s}}$}};
	\node at (9.1,11.25){\textcolor{red}{$t^{\frac{1}{2s}}\le x_\lambda(t) \le t^{\frac{1}{\beta-1}+\frac{1}{2s}}$}};
	\node at (15,16){$x_\lambda(t) \sim  t$};
	\end{tikzpicture}
\end{center}

\subsection{Further comments}
Before going to the proofs of our results, we would like to make some further comments.
First, we would like to emphasize that similar results were previously obtained  in \cite{Alfaro2017}  in the context of integrodifferential equation 
\begin{equation}\label{noloc}
\begin{cases}
\partial_tu(t,x)=J\star u(t,x) -u(t,x) + f(u(t,x)) \quad \text{ for }\quad t>0, x\in \R\\
u(0,x)=u_0(x)
\end{cases}
\end{equation}
where $J\star u$ stands for the standard  convolution and $J$ is a positive probability density with a finite first moment i.e $J\in L^1(\R)$ such that $J\ge 0,\int_{\R}J(z)\,dz=1,\int_{\R}J(z)|z|\,dz<+\infty$. The two equations \eqref{noloc}
 and \eqref{eqn:1-1} shares some similarities,  and in particular the equation \eqref{eqn:1-1} may be viewed as a reformulation of the equation \eqref{noloc}  but with a non integrable  singular kernel. However,  the treatment of the singularity is of crucial importance here and induces some tricky technical difficulties,  which the  ideas developed to analyse \eqref{noloc} seem not able to overcome.   Indeed, the challenge of singularity here is  intrinsic and related to the physical nature of the fractional  Laplacian.  The approach  here   is  hence  not just an adaptations of the proofs given in \cite{Alfaro2017},  and  we have to deal with the singularity carefully.   In particular, we go a step further in our understanding of the mechanism triggering acceleration by  describing the situation for $s \in (0,\frac{1}{2})$,  a situation which is not treated in \cite{Alfaro2017}
 at all.  We believe that some of the techniques developed here will  be  also useful to apprehend propagation phenomena in the equation \eqref{noloc} for kernels that do not satisfy  this first moment condition. In particular, the analysis presented here should provide the ground for a deeper understanding of nonlocal combustion problems modeled by the equation \eqref{noloc} studied in \cite{Coville2007d},    by ensuring that the  existence of traveling front is conditioned to a first moment property satisfied by the kernel.  Works in this direction are currently underway.

We also want to stress  that although our results gives some good insights on the speed of the level sets, apart from situations involving combustion nonlinearities  where a precise asymptotic is known,  there is  still a gap in our estimates and the right behaviour,that we believe is $t^{\frac{\beta}{2s(\beta-1)}}$ has not been capture yet.  Using new approaches, recent progress have been made  on the understanding of  acceleration phenomena in various situations , namely for  semilinear equation like \eqref{rd}  with a  nonlinear diffusion instead of the classical diffusion \cite{Alfaro2017b,Alfaro2019,Garnier2017}  as well as for the equation \eqref{noloc} with Fisher -KPP type nonlinearity \cite{Bouin2018,Garnier2011}.  
The different approaches developed in these works may be of some help in this task.  Works in this direction are also under consideration.

The paper is organised as follows.  In Section \ref{sec1}, we  prove  Theorem \ref{thm:1-1} and obtain the upper bound on the speed of the level set.Then in Section \ref{sec-l}, we obtain the generic lower bound on this speed, Theorem \ref{thm:1-2}. Finally, in the last section, Section \ref{sec3}, we prove the a refine estimate of this speed  when  a  degenerate monostable nonlinearity $f$ is considered, Theorem \ref{thm:1-3}.  
\section{Upper bound on the speed of the super level sets}\label{sec1}

\noindent \emph{Construction of a supersolution}: For some constant $p>0$ which will be determined later, let's define
\begin{eqnarray*}
	v_0(x)=\left\{
	\begin{array}{ll}
		1, &\textnormal{if $x\leq1$},\\
		\displaystyle \frac{1}{x^p}, &\textnormal{if $x>1$}.
	\end{array}
	\right.
\end{eqnarray*}

\begin{center}
	\begin{tikzpicture}[scale=2]
	\draw[line width=1pt,color=black,->] (-1.5,0)--(3,0) node[right]{$x$};
	
	\draw[line width=1pt,,color=black,->] (0,-0.2)--(0,1.5) node[right]{$y$};
	
	\draw[line width=1pt, color=black, domain=1:2.8, samples=100,variable=\t] plot ({\t},{1/(\t*\t*\t  ) });
	
	\draw[line width=1pt] (-1.5,1)--(1,1) node[right]{};
	
	\draw[line width=1pt,style=dashed] (1,0)--(1,1);
	
	\node at (-0.1,-0.1){$0$};
	
	\node at (-0.1,1.1){$1$};
	
	\node at (1,-0.1){$1$};
	
	\node at (2,0.4){$v_0(x)$};
	
	\end{tikzpicture}
\end{center}

For any $\gamma>0$, let $w(t,x)$ be the solution to the following initial-value problem:
\begin{eqnarray*}
	\left\{
	\begin{array}{l}
		\displaystyle \frac{dw(t,x)}{dt}=\gamma[w(t,x)]^\beta,\\
		w(0,x)=v_0(x)
	\end{array}
	\right.
\end{eqnarray*}

Since $\beta>1$, it's easy to solve the above problem and obtain
\begin{eqnarray*}
	w(t,x)&=&\frac{1}{\left[[v_0(x)]^{1-\beta}-\gamma(\beta-1)t  \right]^{\frac{1}{\beta-1}}}.  
\end{eqnarray*}

By the definition of $v_0(x)$, it's easy to see that $w(t,x)$ is well defined for $t\in\left[0,\frac{1}{\gamma(\beta-1)}\right)$ if $x\leq1$; and $w(t,x)$ is well defined for $t\in\left[0, \frac{x^{p(\beta-1)}}{\gamma(\beta-1)}\right)$ if $x>1$. When $t$ is fixed, the function $w(t,x)$ is decreasing with respect to $x$.

Let $x_0(t)=\left[ 1+\gamma(\beta-1)t  \right]^{\frac{1}{p(\beta-1)}}$ for all $t>0$, it's easy to know that $x_0(t)>1$, $w(t,x_0(t))=1$, $w(t,x)>1$ for all $x<x_0(t)$, and $w(t,x)<1$ for all $x>x_0(t)$. Let's consider the function 
\begin{eqnarray*}
	m(t,x)=\left\{
	\begin{array}{ll}
		1, &\textnormal{if $x\leq x_0(t)$},\\
		w(t,x),& \textnormal{if $x>x_0(t)$}.
	\end{array}
	\right.
\end{eqnarray*}

It's easy to see that $m(t,x)$ is well defined for all $t\geq0$ and all $x\in\mathbb{R}$, and $0<m(t,x)\leq 1$ for all $t\geq0$ and all $x\in\mathbb{R}$.

\begin{claimd}\label{claim:2-1}
	If $p+1\geq p\beta$, then there exists some constant $C(p,\beta)>0$ such that
	\[ |\partial_x m(t,x)|+|\partial^2_{xx} m(t,x)|\leq C(p,\beta),\qquad\forall t>0,\ \forall x\in\mathbb{R}.  \]	
	
\end{claimd}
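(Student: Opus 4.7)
\bigskip

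\noindent\textbf{Proof plan.} My approach is to split the analysis at the free boundary $x=x_0(t)$ and treat each piece separately. For $x\le x_0(t)$, the function $m(t,x)\equiv 1$ is constant, so $\partial_x m=\partial_{xx}m=0$ trivially. The real work is on the region $x>x_0(t)$, where $m(t,x)=w(t,x)$; here I would introduce the shorthand $\alpha:=\beta-1>0$ and write
\[
	w(t,x)=g(t,x)^{-1/\alpha},\qquad g(t,x):=x^{p\alpha}-\gamma\alpha t,
\]
so that the definition of $x_0(t)$ becomes precisely $g(t,x_0(t))=1$, and consequently $g(t,x)\ge 1$ whenever $x\ge x_0(t)$ (since $g$ is increasing in $x$). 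Note also that $x_0(t)\ge 1$ for all $t\ge 0$.

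Next I would differentiate directly. A one-line chain rule gives
\[
	\partial_x w=-p\,x^{p\alpha-1}\,g^{-\beta/\alpha},
\]
and differentiating once more yields
\[
	\partial_{xx}w=-p(p\alpha-1)\,x^{p\alpha-2}\,g^{-\beta/\alpha}+p^2\beta\,x^{2p\alpha-2}\,g^{-(\beta+\alpha)/\alpha}.
\]
The structural hypothesis $p+1\ge p\beta$ is exactly $p\alpha\le 1$, which provides two crucial facts at once: (i) the exponents $p\alpha-1$, $p\alpha-2$, and $2p\alpha-2$ are all nonpositive, so each power of $x$ in the above expressions is bounded by $1$ on $[x_0(t),\infty)\subset[1,\infty)$; and (ii) the constant $|p\alpha-1|$ is at most $1$. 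Combined with $g^{-\beta/\alpha}\le 1$ and $g^{-(\beta+\alpha)/\alpha}\le 1$ (from $g\ge 1$), this yields the uniform bounds
\[
	|\partial_x w(t,x)|\le p,\qquad |\partial_{xx}w(t,x)|\le p+p^2\beta,
\]
for every $t>0$ and every $x>x_0(t)$, with constants depending only on $p$ and $\beta$.

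Putting the two regions together proves the claim with $C(p,\beta):=p+p+p^2\beta$, the bound being understood pointwise on $\{x\ne x_0(t)\}$. The main (minor) subtlety is that at the matching point $x=x_0(t)$ the derivative $\partial_x m$ has a jump from $0$ to $-p\,x_0(t)^{p\alpha-1}$, so $m$ is Lipschitz but not $C^1$ across the interface; this however is harmless for the sequel since the fractional Laplacian only requires pointwise $C^{1,1}$ control away from a measure-zero set together with global boundedness. I expect the only real obstacle to be bookkeeping the exponents to make sure the inequality $p\alpha\le 1$ is used in the right place for each term, but once the expression $w=g^{-1/\alpha}$ is set up the rest is a routine application of the chain rule and monotonicity of $g$.
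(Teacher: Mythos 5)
Your proof is correct and follows essentially the same route as the paper: split at the interface $x=x_0(t)$, use $m\equiv 1$ on the left, and on the right compute $\partial_x w$ and $\partial_{xx}w$ by the chain rule (your $w=g^{-1/\alpha}$ notation is equivalent to the paper's expressing everything through $[m]^\beta=[v_0^{1-\beta}-\gamma(\beta-1)t]^{-\beta/(\beta-1)}$), then bound each factor using $p+1\ge p\beta$ to make the exponents of $x$ nonpositive and $g\ge 1$ (i.e.\ $0<m\le 1$) to kill the remaining powers. Your observation that $m$ is only Lipschitz, not $C^1$, across $x=x_0(t)$ is a small but accurate refinement that the paper glosses over (and your coefficient $p(p+1-p\beta)$ in $\partial_{xx}w$ corrects a stray $\beta$ in the paper's displayed bound, though this changes nothing in the conclusion).
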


\begin{proof}
	It's easy to see that $\partial_xm(t,x)=\partial^2_{xx}m(t,x)=0$ for all $(t,x)$ such that $x\leq x_0(t)$. Now for any $x>x_0(t)$, then
	\begin{eqnarray*}
		m(t,x)=w(t,x)=[ [v_0(x)]^{1-\beta}-\gamma(\beta-1)t  ]^{\frac{1}{1-\beta} }
	\end{eqnarray*}
	
	Since $x_0(t)>1$, then $v_0(x)=\frac{1}{x^p}$. A direct computation shows that
	\begin{eqnarray*}
		\partial_xm(t,x)&=&\partial_xw(t,x)\\
		&=&\frac{1}{1-\beta}\cdot [ [v_0(x)]^{1-\beta}-\gamma(\beta-1)t  ]^{\frac{1}{1-\beta} -1}\cdot (1-\beta)[v_0(x)]^{-\beta}\cdot v_0'(x)\\
		&=&-p[m(t,x)]^{\beta}\cdot x^{p\beta-p-1}.
	\end{eqnarray*}

    Since $\beta>1$, $p+1\geq p\beta$, $0<m(t,x)\leq 1$, and $x>x_0(t)>1$, then we have
    \[  |\partial_xm(t,x)|\leq p.  \]
    
    On the other hand, we have 
    \begin{eqnarray*}
    	\partial^2_{xx}m(t,x)&=&-p\partial_x\left[ [m(t,x)]^{\beta}\cdot x^{p\beta-p-1}  \right]\\
    	&=&-p\left[\beta[m(t,x)]^{\beta-1}\cdot \partial_xw(t,x)\cdot x^{p\beta-p-1}+[m(t,x)]^\beta\cdot (p\beta-p-1)\cdot x^{p\beta-p-2}    \right]\\
    	&=&-p\left[\beta[m(t,x)]^{\beta-1}\cdot \left[ -p[m(t,x)]^{\beta}\cdot x^{p\beta-p-1} \right]\cdot x^{p\beta-p-1}+[m(t,x)]^\beta\cdot (p\beta-p-1)\cdot x^{p\beta-p-2}    \right]\\
    	&=&p^2\beta[m(t,x)]^{2\beta-1}x^{2(p\beta-p-1)}+p\beta(p+1-p\beta)[m(t,x)]^{\beta}x^{p\beta-p-2}.
    \end{eqnarray*}

   Since $\beta>1$, $p+2>p+1\geq p\beta$, $0<m(t,x)\leq 1$, and $x>x_0(t)>1$, then we have
   \[  |\partial^2_{xx}m(t,x)|\leq p^2\beta+p\beta(p+1-p\beta).  \]

\end{proof}

\begin{claimd}\label{claim:2-2}
	If $p+1\geq p\beta$, then there exists some constant $C(s,p,\beta)>1$ such that
	\[  | (-\Delta)^sm(t,x)|\leq C(s,p,\beta),\qquad\forall t>0,\ \forall x\in\mathbb{R}.  \]
	
\end{claimd}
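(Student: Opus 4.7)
The plan is to bound $|(-\Delta)^s m(t,x)|$ by splitting the defining singular integral into a near-diagonal piece and a tail piece, estimating them with the uniform second-derivative bound from Claim \ref{claim:2-1} and the global $L^\infty$ control $0<m\leq 1$, respectively. Using the symmetrized representation
\[
(-\Delta)^s m(t,x)=\frac{1}{2}\int_{\mathbb{R}}\frac{2m(t,x)-m(t,x+z)-m(t,x-z)}{|z|^{1+2s}}\,dz,
\]
I would cut the integral at $|z|=1$. On the tail $|z|\geq 1$, the crude pointwise bound $|2m(t,x)-m(t,x+z)-m(t,x-z)|\leq 4$ combined with $\int_{|z|\geq 1}|z|^{-1-2s}\,dz=1/s$ yields a constant depending only on $s$. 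On the near piece $|z|<1$, the uniform estimate $\|\partial_{xx}^2 m\|_\infty\leq C(p,\beta)$ from Claim \ref{claim:2-1} together with a second-order Taylor expansion give $|2m(t,x)-m(t,x+z)-m(t,x-z)|\leq C(p,\beta)\,z^2$, and since $\int_{|z|<1}|z|^{1-2s}\,dz$ converges for $s<1$, this piece is bounded by a constant depending on $s,p,\beta$. Adding the two contributions yields the desired constant.

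The main technical obstacle is that $m(t,\cdot)$ is only piecewise $C^{1,1}$: at $x=x_0(t)$ the left derivative is $0$ while the right derivative equals $-p\,x_0(t)^{p\beta-p-1}$, so the Taylor bound above is valid only away from this corner. To cover all $x$ uniformly, I would separate the cases $|x-x_0(t)|\geq 1$ and $|x-x_0(t)|<1$. In the first case the near-diagonal interval $(x-1,x+1)$ avoids $x_0(t)$, so $m$ is smooth there and the Taylor argument applies directly. In the second case, I would further split $\{|z|<1\}$ according to the position of $x\pm z$ relative to $x_0(t)$ and use one-sided second-derivative bounds on each subinterval; the fact that $m\equiv 1$ on $(-\infty,x_0(t)]$ kills the contribution whenever both $x$ and $x\pm z$ lie in this flat region, which is what lets the one-sided estimates suffice.

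The hypothesis $p+1\geq p\beta$ plays a double role: it is precisely the hypothesis of Claim \ref{claim:2-1}, providing the bounds on $\partial_x m$ and $\partial_{xx}^2 m$; and, since $x_0(t)\geq 1$, it guarantees that the explicit factors $x^{p\beta-p-1}$ and $x^{p\beta-p-2}$ appearing in those derivatives are both bounded by $1$ on $[x_0(t),\infty)$, so that none of the constants leak a dependence on $t$ or on the reference point $x$. Assembling the near/tail split together with the careful treatment around the corner then produces the claimed uniform bound $|(-\Delta)^s m(t,x)|\leq C(s,p,\beta)$ for all $t>0$ and $x\in\mathbb{R}$.
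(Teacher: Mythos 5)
Your split at $|h|=1$ with the global $L^\infty$ bound on the tail and a second-order Taylor estimate on the near-diagonal part is exactly the paper's argument, and you correctly notice a point that the paper's proof leaves implicit: $m(t,\cdot)$ has a genuine corner at $x_0(t)$, where the left derivative is $0$ while the right derivative equals $-p\,x_0(t)^{p\beta-p-1}\neq 0$, so $m$ is only Lipschitz, not $C^{1,1}$, across that point.

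Your proposed fix does not close this gap, though. When the interval $(x-|z|,x+|z|)$ straddles $x_0(t)$, the jump in $\partial_x m$ means one-sided second-derivative bounds on the two pieces cannot be combined into a quadratic estimate: all that is available there is the Lipschitz bound $|m(t,x+z)+m(t,x-z)-2m(t,x)|\lesssim |z|$, and $\int_{|z|<1}|z|^{-2s}\,dz$ diverges once $s\geq \tfrac12$. Quantitatively, writing $a=|x-x_0(t)|$, the near-diagonal contribution is of order $a^{1-2s}$, which is unbounded as $a\to 0$ when $s\geq\tfrac12$; one can check directly that $(-\Delta)^s m(t,x_0(t))=+\infty$ in that range. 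So the two-sided inequality stated in the claim actually fails for $s\geq\tfrac12$, and neither your argument nor the paper's establishes it. What does hold uniformly, and what is the only thing used downstream in Claim~\ref{claim:2-4}, is the one-sided bound $(-\Delta)^s m(t,x)\geq -C(s,p,\beta)$: the diverging contribution at the corner has a definite sign (it makes $(-\Delta)^s m$ large and positive), so it can only help, not hurt, the supersolution inequality. If you want a correct statement, replace the absolute value by this one-sided estimate, or restrict $x$ to a region where $\mathrm{dist}(x,x_0(t))$ is bounded below, and the rest of your near/tail decomposition goes through unchanged.
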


\begin{proof}
	By the definition of $(-\Delta)^sm(t,x)$, it's easy to see that
	\begin{eqnarray*}
		(-\Delta)^sm(t,x)&=&\frac{1}{2}\int_{\mathbb{R}} \frac{ m(t,x+h)+m(t,x-h)-2m(t,x)  }{|h|^{1+2s}} dh\\
		&=&\frac{1}{2}\int_{|h|\geq1 } \frac{ m(t,x+h)+m(t,x-h)-2m(t,x)  }{|h|^{1+2s}} dh+\frac{1}{2}\int_{|h|<1} \frac{ m(t,x+h)+m(t,x-h)-2m(t,x)  }{|h|^{1+2s}} dh
	\end{eqnarray*}
	
	Since $0<m(t',x')\leq1$ for all $t'>0$ and all $x'\in\mathbb{R}$, by {\bf Claim \ref{claim:2-1}}, then there exists some $C_1(s,p,\beta)>0$ such that
	\begin{eqnarray*}
		|(-\Delta)^sm(t,x)|&\leq& \frac{1}{2}\int_{|h|\geq1} \frac{4}{|h|^{1+2s}} dh+\frac{1}{2}\int_{|h|<1} \frac{ C_1|h|^{2}   }{|h|^{1+2s}} dh\\
		&=&C(s,p,\beta).
	\end{eqnarray*}
	
\end{proof}

\begin{claimd}\label{claim:2-3}
	For any $(t,x)$ such that $x\leq x_0(t)$, we have
	\[ \partial_tm(t,x)+(-\Delta)^sm(t,x)-f(m(t,x))>0.  \]
	
\end{claimd}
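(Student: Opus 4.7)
The plan is to exploit the fact that on the region $\{x \le x_0(t)\}$ the function $m$ is identically equal to its maximum value $1$, which trivializes both the time derivative and the reaction term and leaves only the fractional Laplacian to estimate from below.

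First, I would observe directly from the definition of $m$ that for $(t,x)$ with $x \le x_0(t)$ we have $m(t,x) = 1$, so $\partial_t m(t,x) = 0$. Combined with $f(1) = 0$ from Assumption \ref{ass:1-1}, the inequality to be proved reduces to the strict positivity
\[
(-\Delta)^s m(t,x) > 0 \qquad \text{for all } x \le x_0(t).
\]

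Next I would use the symmetric principal-value representation
\[
(-\Delta)^s m(t,x) = \frac{1}{2}\int_{\mathbb{R}} \frac{2m(t,x) - m(t,x+h) - m(t,x-h)}{|h|^{1+2s}}\, dh
\]
(valid since Claim \ref{claim:2-1} gives the $C^2$ regularity needed to handle the singularity at $h=0$ and the $L^\infty$ bound controls the tail, cf.\ Claim \ref{claim:2-2}). Since $0 < m(t,y) \le 1 = m(t,x)$ for every $y \in \mathbb{R}$, the integrand is pointwise nonnegative. To upgrade nonnegativity to strict positivity, I would pick any $h_0 > x_0(t) - x \ge 0$, so that $x + h_0 > x_0(t)$, which by the definition of $m$ gives $m(t,x+h_0) = w(t,x+h_0) < 1 = m(t,x)$. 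By continuity of $m(t,\cdot)$ this strict inequality persists on an open neighborhood of $h_0$, making the nonnegative integrand strictly positive on a set of positive measure. Therefore the integral is strictly positive, which yields the desired inequality.

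There is essentially no obstacle here beyond verifying that the integral representation is rigorously applicable: the $C^2$-bounds of Claim \ref{claim:2-1} guarantee integrability near $h=0$, and the boundedness of $m$ handles $|h|\to\infty$. The entire content of the claim is the monotone/extremal character of $m$ on the plateau $\{m=1\}$, exactly as one expects for a supersolution being constructed by gluing the constant state $1$ on the left to a decreasing profile on the right.
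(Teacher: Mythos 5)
Your proof is correct and takes essentially the same route as the paper: reduce the supersolution inequality to $(-\Delta)^s m > 0$ by using $m \equiv 1$ (hence $\partial_t m = 0$ and $f(1)=0$), then exploit that $m$ attains its global maximum on the plateau while dropping strictly below $1$ on an open set. The paper simply asserts the strict positivity of $(-\Delta)^s m$ as "easy to obtain"; you supply the missing detail (strict inequality on a set of positive measure via $m(t,\cdot+h_0) = w < 1$ for $h_0 > x_0(t)-x$), which is a welcome clarification of the same argument.
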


\begin{proof}
	In fact, since $x\leq x_0(t)$, then $m(t,x)=1$. By {\bf Assumption \ref{ass:1-1}}, then   $f(m(t,x))=f(1)=0$. By the definition of $m(t',x')$, it's easy to see that $\partial_tm(t,x)=0$. Since $m(t,x)=1$, $0<m(t',x')\leq1$ for all $t'>0$ and all $x'\in\mathbb{R}$, by the definition of $(-\Delta)^sm(t,x)$, it's easy to obtain that $(-\Delta)^sm(t,x)>0$. In summary, we obtain that 
	\[ \partial_tm(t,x)+(-\Delta)^sm(t,x)-f(m(t,x))>0.  \]
	
\end{proof}

\begin{claimd}\label{claim:2-4}
	If $p+1\geq p\beta$, let $C_1$ be the positive constant defined in {\bf Claim \ref{claim:2-2}}, for any fixed $\gamma>\gamma_0:=r+2C_1$, let's define
	\[ x_\gamma(t)=\left[\left(\frac{\gamma-r}{C_1}\right)^{\frac{\beta-1}{\beta} }+\gamma(\beta-1)t   \right]^{\frac{1}{p(\beta-1)}}.  \]
	
	\noindent Then $x_0(t)<x_\gamma(t)$ for all $t>0$, and for any $(t,x)$ such that $x_0(t)<x\leq x_\gamma(t)$, we have
	\[ \partial_tm(t,x)+(-\Delta)^sm(t,x)-f(m(t,x))\geq0.  \]
	
\end{claimd}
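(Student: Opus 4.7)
The plan is to observe that on the open region $\{x>x_0(t)\}$ the function $m$ coincides with $w$, so $\partial_t m=\gamma w^\beta=\gamma m^\beta$. Combining with Assumption~\ref{ass:1-1}, namely $f(m)\le r m^\beta$, and with the bound $|(-\Delta)^s m|\le C_1$ from Claim~\ref{claim:2-2}, I get
\begin{equation*}
\partial_t m(t,x)+(-\Delta)^s m(t,x)-f(m(t,x))\ge (\gamma-r)\,m(t,x)^\beta - C_1.
\end{equation*}
Thus the whole statement reduces to showing $m(t,x)^\beta\ge C_1/(\gamma-r)$ on the strip $x_0(t)<x\le x_\gamma(t)$.

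Since $m(t,\cdot)=w(t,\cdot)$ is decreasing in $x$ on $(x_0(t),+\infty)$, the minimum on the strip is attained at $x=x_\gamma(t)$, so it suffices to evaluate $w$ at the right endpoint. For $x>1$ one has $v_0(x)^{1-\beta}=x^{p(\beta-1)}$, hence
\begin{equation*}
w(t,x)=\bigl[x^{p(\beta-1)}-\gamma(\beta-1)t\bigr]^{-1/(\beta-1)}.
\end{equation*}
Plugging in $x=x_\gamma(t)$, the definition of $x_\gamma$ is precisely tailored so that the bracket equals $\bigl(\frac{\gamma-r}{C_1}\bigr)^{(\beta-1)/\beta}$, giving $w(t,x_\gamma(t))=\bigl(\frac{C_1}{\gamma-r}\bigr)^{1/\beta}$ and therefore $w(t,x_\gamma(t))^\beta=C_1/(\gamma-r)$, exactly the required lower bound.

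It remains to verify the strict inequality $x_0(t)<x_\gamma(t)$ for every $t>0$. Comparing the two expressions (both are of the form $[A+\gamma(\beta-1)t]^{1/(p(\beta-1))}$) this amounts to $\bigl(\frac{\gamma-r}{C_1}\bigr)^{(\beta-1)/\beta}>1$, i.e.\ $\gamma-r>C_1$; the choice $\gamma>\gamma_0=r+2C_1$ gives this with room to spare (the extra $C_1$ will presumably be useful in the next claim, when continuing the supersolution beyond $x_\gamma$). I do not anticipate a genuine obstacle: once Claim~\ref{claim:2-2} is in hand, the argument is essentially a one-line comparison, and the only subtlety is tracking the exponents in $x_\gamma(t)$ carefully so that the endpoint value of $w^\beta$ matches $C_1/(\gamma-r)$ on the nose.
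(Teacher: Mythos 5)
Your proposal is correct and follows essentially the same line of argument as the paper: reduce to $(\gamma-r)w(t,x)^\beta - C_1 \ge 0$ via $\partial_t m=\gamma w^\beta$, $f(m)\le r m^\beta$, and the uniform bound $|(-\Delta)^s m|\le C_1$, then use the monotonicity of $w(t,\cdot)$ and the explicit value $w(t,x_\gamma(t))^\beta = C_1/(\gamma-r)$, together with $\gamma-r>2C_1$ to get $x_0(t)<x_\gamma(t)$.
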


\begin{proof}
	Since $\gamma-r\geq 2C_1>2$, since $\beta>1$, then $x_\gamma(t)>[1+\gamma(\beta-1)t]^{\frac{1}{p(\beta-1)}}=x_0(t)$. Now for any $(t,x)$ such that $x_0(t)<x\leq x_\gamma(t)$, by {\bf Assumption \ref{ass:1-1}} and {\bf Claim \ref{claim:2-2}}, then we have
	\begin{eqnarray*}
		\partial_tm(t,x)+(-\Delta)^sm(t,x)-f(m(t,x))&=&\partial_tw(t,x)+(-\Delta)^sm(t,x)-f(w(t,x))\\
		&=&\gamma[w(t,x)]^\beta-f(w(t,x))+(-\Delta)^sm(t,x)\\
		&\geq&\gamma[w(t,x)]^\beta- r[w(t,x)]^\beta-C_1\\
		&=& (\gamma-r)[w(t,x)]^\beta-C_1.
	\end{eqnarray*}

    Since $w(t,x')$ is decreasing with respect to $x'$, since $x_0(t)<x\leq x_\gamma(t)$, then we obtain
    \begin{eqnarray*}
    	\partial_tm(t,x)+(-\Delta)^sm(t,x)-f(m(t,x))\geq (\gamma-r)[w(t,x_\gamma(t))]^\beta-C_1=0.
    \end{eqnarray*}

\end{proof}

In the following, let's verify the supersolution inequality for $(t,x)$ with $x>x_\gamma(t)$. First, we introduce notations $q:=p(\beta-1)$ and $\sigma:=\gamma(\beta-1)t$. For some constant $K>2$ which will be determined later, let's write
\begin{eqnarray*}
	-(-\Delta)^sm(t,x)&=&{\int_{-\infty}^{\frac{x_0(t)-x}{K}  } \frac{m(t,x+z)-m(t,x)}{|z|^{1+2s}} dz     }+{\textnormal{P.V. }\int^{+\infty}_{\frac{x_0(t)-x}{K}  } \frac{m(t,x+z)-m(t,x)}{|z|^{1+2s}} dz     }\\
	&=:&I_1+I_2.
\end{eqnarray*}

Since $\beta>1$, by the definitions of $x_\gamma(t)$ and $x_0(t)$, we can find some large $\gamma_1$ which may depend on $K$ such that $\gamma_1>\gamma_0$ (where $\gamma_0$ is defined in {\bf Claim \ref{claim:2-4}}), and $x_0(t)-x<-K$ for all $\gamma\geq\gamma_1$ and all $x>x_\gamma(t)$. Since $0<m(t',x')\leq1$ for all $t'>0$ and $x'\in\mathbb{R}$, then
\begin{eqnarray*}
	I_1 \leq \int_{-\infty}^{\frac{x_0(t)-x}{K}  } \frac{1}{|z|^{1+2s}} dz=\frac{1}{2s}\cdot \left[ \frac{K}{x-x_0(t)}\right]^{2s}.
\end{eqnarray*}

By choosing $q<1$ (that is, $p(\beta-1)<1$), then $x^q\leq [x_0(t)]^q+(x-x_0(t))^q$, that is, $[x^q-[x_0(t)]^q  ]^{\frac{1}{q}}\leq x-x_0(t)$, which implies that
\begin{eqnarray*}
	\frac{1}{[x-x_0(t)]^{2s }}\leq \frac{1}{[x^q-[x_0(t)]^q]^{\frac{2s}{q}} }.
\end{eqnarray*}

Since $x>x_0(t)+K>2$, then $(x^q-1)^{\frac{1}{q}}>1$. So we get $\left[ v_0((x^q-1)^{\frac{1}{q}})  \right]^{1-\beta}=x^q-1$, which implies that 
\begin{eqnarray*}
    w(t,(x^q-1)^{\frac{1}{q}})   &=&\frac{ 1 }{\left[ \left[ v_0((x^q-1)^{\frac{1}{q}})  \right]^{1-\beta}-\gamma(\beta-1)t   \right]^{\frac{1}{\beta-1} }}\\
	&=&\frac{1}{\left[ x^q-1-\gamma(\beta-1)t   \right]^{\frac{1}{\beta-1}  }}\\
	&=&\frac{1}{\left[ x^q-[x_0(t)]^q   \right]^{\frac{1}{\beta-1}  }}.
\end{eqnarray*}

Since $\frac{2s}{q}\cdot (\beta-1)=\frac{2s}{p}$ and $x-x_0(t)>K$, then we can obtain
\begin{eqnarray*}
	I_1\leq \frac{1}{2s}\cdot K^{2s}\cdot \left[ w(t,(x^q-1)^{\frac{1}{q}})  \right]^{\frac{2s}{p}}.
\end{eqnarray*}

Since $x>1$ and $q=p(\beta-1)$, then we have
\begin{eqnarray*}
	\frac{w(t,(x^q-1)^{\frac{1}{q}}) }{w(t,x)}&=&\frac{1}{w(t,x)}\cdot \frac{1}{\left[ x^q-1-\gamma(\beta-1)t   \right]^{\frac{1}{\beta-1}  }}\\
	&=&\frac{1}{w(t,x)}\cdot\frac{1}{\left[ x^q-1- [v_0(x)]^{1-\beta}+[w(t,x)]^{1-\beta})   \right]^{\frac{1}{\beta-1}  }}\\
	&=&\frac{1}{w(t,x)}\cdot\frac{1}{\left[ x^q-1- x^{-p(1-\beta)}+[w(t,x)]^{1-\beta})   \right]^{\frac{1}{\beta-1}  }}\\
	&=&\frac{1}{w(t,x)}\cdot\frac{1}{\left[ -1+[w(t,x)]^{1-\beta})   \right]^{\frac{1}{\beta-1}  }}\\
	&=&\frac{ 1  }{\left[ 1-[w(t,x)  ]^{\beta-1}    \right]^{ \frac{1}{\beta-1 }}}.
\end{eqnarray*}

Since $x>x_\gamma(t)$, and $w(t',x')$ is decreasing with respect to $x'$, then
\begin{eqnarray*}
	w(t,x)<w(t,x_\gamma(t))=\left[ \frac{C_1}{\gamma-r}  \right]^{\frac{1}{\beta}}.
\end{eqnarray*}

So we can find some large $\gamma_2>\gamma_1$ such that for all $\gamma>\gamma_2$, we have $\left[ \frac{C_1}{\gamma-r}  \right]^{\frac{1}{\beta}}\leq 1$, which implies that
\begin{eqnarray*}
	I_1\leq C_2 K^{2s}\cdot [w(t,x)]^{\frac{2s}{p}}.
\end{eqnarray*}

For $I_2$, since $x_0(t)-x<-K$, then we have
\begin{eqnarray*}
	I_2&=&{\int_{\frac{x_0(t)-x}{K}  }^{-1} \frac{m(t,x+z)-m(t,x)}{|z|^{1+2s}} dz  }  +{\textnormal{P.V. }\int_{-1}^1 \frac{m(t,x+z)-m(t,x)}{|z|^{1+2s}} dz  }+{  \int_1^{+\infty} \frac{m(t,x+z)-m(t,x)}{|z|^{1+2s}} dz }\\
	&=:&I_3+I_4+I_5.
\end{eqnarray*}

For $I_3$, for all $\frac{x_0(t)-x}{K}\leq z\leq -1$, since $x_0(t)>1$ and $K>2$, then we have $x+z>\frac{x}{2}>1$. By changing variables, $z=xu$, then we have
\begin{eqnarray*}
	I_3&=&\int_{\frac{x_0(t)-x}{K}  }^{-1} \frac{w(t,x+z)-w(t,x)}{|z|^{1+2s}} dz\\
	&=&\int_{\frac{x_0(t)-x}{Kx}  }^{-\frac{1}{x}} \frac{w(t,x+xu)-w(t,x)}{|xu|^{1+2s}} \cdot xdu\\
	&=&xw(t,x)\int_{\frac{x_0(t)-x}{Kx}  }^{-\frac{1}{x}} \frac{1}{|xu|^{1+2s}}\cdot\left[\frac{w(t,x+xu)}{w(t,x)}-1   \right]du\\
	&=&xw(t,x)\int_{\frac{x_0(t)-x}{Kx}  }^{-\frac{1}{x}} \frac{1}{|xu|^{1+2s}}\cdot\left[ \frac{[(x+xu)^{p(\beta-1)}-(\beta-1)\gamma t]^{\frac{1}{1-\beta}}  }{[x^{p(\beta-1)}-(\beta-1)\gamma t]^{\frac{1}{1-\beta}}}   -1   \right]du\\
	&=&xw(t,x)\int_{\frac{x_0(t)-x}{Kx}  }^{-\frac{1}{x}}\frac{1}{|xu|^{1+2s}}\cdot\left[ \frac{1}{ \left[ \frac{(1+u)^q-1}{1-\frac{\sigma}{x^q}} +1  \right]^{\frac{p}{q}}  }    -1\right] du.
\end{eqnarray*}

\begin{claimd}\label{claim:2-5}
	For $q<1$, there exists some $K(q)>0$ such that for all $t>0$, all $K\geq K(q)$, all $x>x_0(t)$, and all $u\in\left[\frac{x_0(t)-x}{Kx} ,0  \right]$, we have
	\[ \frac{(1+u)^q-1}{1-\frac{\sigma}{x^q}}-1\geq -\frac{1}{2}.  \]
	
\end{claimd}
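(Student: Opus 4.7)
The plan is to reduce the target inequality to an upper bound on $1-(1+u)^q$ and a lower bound on $x^q-\sigma$, then show that the constant $K$ only needs to be chosen in terms of $q$ (I expect $K(q) = 2/q$ to suffice). Throughout, I use $q\in(0,1)$ and the identity $x_0(t)^q = 1+\sigma$, so that $1-\sigma/x^q = (x^q-x_0(t)^q+1)/x^q > 0$ whenever $x>x_0(t)$. Multiplying through, the claim becomes
\[
2x^q\bigl(1-(1+u)^q\bigr) \;\le\; x^q-\sigma.
\]

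\textbf{Step 1 (controlling the numerator).} For $u\in[(x_0(t)-x)/(Kx),0]$ one has $1+u\in(0,1]$ as soon as $K\ge 1$, and $|u|\le (x-x_0(t))/(Kx)$. The elementary inequality $t^q\ge t$ for $t\in[0,1]$, $q\in(0,1)$, applied with $t=1+u$, yields $(1+u)^q\ge 1+u$, hence
\[
1-(1+u)^q \;\le\; -u \;\le\; \frac{x-x_0(t)}{Kx}.
\]

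\textbf{Step 2 (controlling the denominator).} Since $y\mapsto y^q$ is concave with decreasing derivative, the mean value theorem applied on $[x_0(t),x]$ gives
\[
x^q - x_0(t)^q \;\ge\; q\, x^{q-1}\bigl(x-x_0(t)\bigr),
\]
so that $x^q - \sigma = x^q - x_0(t)^q + 1 \ge q\,x^{q-1}(x-x_0(t))$.

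\textbf{Step 3 (choosing $K$).} Combining Steps 1 and 2, it suffices to verify
\[
2x^q \cdot \frac{x-x_0(t)}{Kx} \;\le\; q\, x^{q-1}\bigl(x-x_0(t)\bigr),
\]
which simplifies to $2/K \le q$. Thus $K(q):=2/q$ works, and for any $K\ge K(q)$ the desired inequality holds on the full range of $u$ and $x$ stated in the claim. The only genuinely delicate point is Step 1, where one must use concavity of $t\mapsto t^q$ in the correct direction; the rest is bookkeeping using $x_0(t)^q = 1+\sigma$.
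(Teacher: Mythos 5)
Your proof is correct, self-contained, and actually more useful than the paper's treatment, since the paper only defers to \cite{Alfaro2017} without giving the argument. Two things are worth noting. First, you silently repaired a typo: as literally printed, the claim reads $\frac{(1+u)^q-1}{1-\sigma/x^q}-1\ge-\tfrac{1}{2}$, which is impossible (the fraction is nonpositive, so the left side is $\le -1$). The statement that is actually needed, and the one you prove, is $\frac{(1+u)^q-1}{1-\sigma/x^q}\ge-\tfrac{1}{2}$, equivalently that the bracketed base $\frac{(1+u)^q-1}{1-\sigma/x^q}+1$ appearing in the subsequent display stays $\ge\tfrac{1}{2}$; your reformulation $2x^q\bigl(1-(1+u)^q\bigr)\le x^q-\sigma$ is exactly this. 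Second, the argument itself is the natural one: use the identity $x_0(t)^q=1+\sigma$, bound the numerator by $1-(1+u)^q\le -u\le \frac{x-x_0(t)}{Kx}$ via $t^q\ge t$ on $[0,1]$ (a consequence of concavity of $t\mapsto t^q$), and bound the denominator from below by $x^q-\sigma\ge q\,x^{q-1}\bigl(x-x_0(t)\bigr)$ via the mean value theorem together with the extra $+1$; the two bounds match up to the factor $2/(Kq)$, giving the clean explicit threshold $K(q)=2/q$, which is automatically $>2$ since $q<1$, consistent with the ambient assumption $K>2$. The only minor point to flag is that you need $q>0$ as well as $q<1$ for the statement to make sense (e.g.\ for $x_0(t)^q=1+\sigma$ and for $t^q\ge t$), but this is guaranteed in context since $q=p(\beta-1)$ with $p>0$ and $\beta>1$, and you do state $q\in(0,1)$.
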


\begin{proof}
	The proof goes identically with the one in \cite{Alfaro2017}. 
\end{proof}

By {\bf Claim \ref{claim:2-5}} and Lagrange's Mean Value Theorem, then there exists some constant $C_3>0$ such that
\begin{eqnarray*}
	\frac{1}{ \left[ \frac{(1+u)^q-1}{1-\frac{\sigma}{x^q}} +1  \right]^{\frac{p}{q}}  }    -1&\leq&-\frac{p}{q}[1+C_3]\cdot \frac{(1+u)^q-1}{1-\frac{\sigma}{x^q}}\\
	&=&\frac{p}{q}[1+C_3]\cdot x^q\cdot \frac{1-(1+u)^q}{x^q-\sigma}.
\end{eqnarray*}

    Since $[w(t,x)]^{1-\beta}=x^q-\sigma$ and $0<q<1$, then
    \begin{eqnarray*}
    	\frac{1}{ \left[ \frac{(1+u)^q-1}{1-\frac{\sigma}{x^q}} +1  \right]^{\frac{p}{q}}  }    -1&\leq&\frac{p}{q}[1+C_3]\cdot x^q [w(t,x)]^{\beta-1}[1-(1+u)^q] \\
    	&\leq&\frac{p}{q}[1+C_3]\cdot x^q [w(t,x)]^{\beta-1}\cdot |u|^q\\
    	&=&C_4\cdot x^q [w(t,x)]^{\beta-1}\cdot |u|^q.
    \end{eqnarray*}

So we have
\begin{eqnarray*}
	I_3&\leq& xw(t,x)\int_{\frac{x_0(t)-x}{Kx}  }^{-\frac{1}{x}}\frac{1}{|xu|^{1+2s}}\cdot C_4\cdot x^q [w(t,x)]^{\beta-1}\cdot |u|^qdu\\
	&=&C_4[w(t,x)]^\beta \int_{\frac{x_0(t)-x}{Kx}  }^{-\frac{1}{x}}\frac{1}{|xu|^{1+2s}}\cdot |xu|^q\cdot x du\\
	&=&C_4[w(t,x)]^\beta\int_{\frac{x_0(t)-x}{K}}^{-1} \frac{1}{|z|^{1+2s-q}} dz.
\end{eqnarray*}

Let's take $q$ such that $2s-q>0$, that is, $2s>q=p(\beta-1)$, then we have
\begin{eqnarray*}
	I_3\leq C_4[w(t,x)]^\beta\int_{-\infty}^{-1} \frac{1}{|z|^{1+2s-q}} dz=C_5[w(t,x)]^\beta.
\end{eqnarray*}

For $I_4$, it's easy to see that
\[ I_4=\frac{1}{2}\int_{-1}^1 \frac{m(t,x+z)+m(t,x-z)-2m(t,x) }{|z|^{1+2s}} dz.  \]

By the proof of {\bf Claim \ref{claim:2-1}}, since $p+1\geqq p\beta$ and $x>1$, then  we know that
\begin{eqnarray*}
	|\partial^2_{xx}m(t,x)|&\leq& p^2\beta[m(t,x)]^{2\beta-1}+(p+1-p\beta)[m(t,x)]^{\beta}.
\end{eqnarray*}

For any $z\in [-1,1]$, since $x>2$, then $x+z>1$, which implies that 
\begin{eqnarray*}
	|\partial^2_{xx}m(t,x)|&\leq& p^2\beta[w(t,x)]^{2\beta-1}+(p+1-p\beta)[w(t,x)]^{\beta}.
\end{eqnarray*}

Since $\beta>1$ and $0<w(t,x)\leq 1$, then there exists some $C_6>0$ such that
\begin{eqnarray*}
	|\partial^2_{xx}m(t,x)|&\leq& C_6[w(t,x)]^{\beta}.
\end{eqnarray*}

So we know that
\begin{eqnarray*}
	I_4\leq\frac{1}{2}\int_{-1}^1 \frac{C_6[w(t,x)]^\beta |z|^2   }{|z|^{1+2s}} dz=C_7[w(t,x)]^\beta.
\end{eqnarray*}

For $I_5$, for all $z\geq0$, since $m(t',x')$ is decreasing with respect to $x'$, then
\[I_5\leq 0.  \]

So we have
	\begin{eqnarray*}
	\partial_tm(t,x)+(-\Delta)^sm(t,x)-f(m(t,x))&=&\partial_tw(t,x)+(-\Delta)^sm(t,x)-f(w(t,x))\\
	&=&\gamma[w(t,x)]^\beta-f(w(t,x))+(-\Delta)^sm(t,x)\\
	&\geq&\gamma[w(t,x)]^\beta- r[w(t,x)]^\beta-I_1-I_2\\
	&=& (\gamma-r)[w(t,x)]^\beta-I_1-I_3-I_4-I_5\\
	&\geq&[w(t,x)]^\beta[\gamma-r  C_2 K^{2s}\cdot [w(t,x)]^{\frac{2s}{p}-\beta}- C_5-C_7 ].
\end{eqnarray*}

Let's take $p=\frac{2s}{\beta}$ (in this case, we have $p+1\geq p\beta$ and $2s>p(\beta-1 )$), when $\gamma$ is large enough, we have
\[ \partial_tm(t,x)+(-\Delta)^sm(t,x)-f(m(t,x))\geq0.  \]

In summary, we can conclude that $m(t,x)$ is a supersolution to the problem \eqref{eqn:1-1}. By the comparision principle, then
\[u(t,x)\leq m(t,x),\qquad\forall t>0,\ \forall x\in\mathbb{R}.  \]

So for any $\lambda\in(0,1)$ and any $x\in\Gamma_\lambda(t)$, then 
\[ \lambda\leq u(t,x) \leq m(t,x)=\frac{1}{\left[x^{\frac{2s(\beta-1)}{\beta} }-\gamma(\beta-1)t  \right]^{\frac{1}{\beta-1}}}. \]

Since $\beta>1$, hence we get
\[ x\leq \left[ \left(\frac{1}{\lambda}  \right)^{\beta-1}+\gamma(\beta-1)t    \right]^{\frac{\beta}{2s(\beta-1)}  }.   \]

So when $T_\lambda\gg1$, we have
\begin{eqnarray*}
	x_\lambda(t)\leq C(\lambda)\cdot t^{\frac{\beta}{2s(\beta-1)}}.
\end{eqnarray*}

\section{Lower bound on the speed of the super level sets}\label{sec-l}

\begin{proof}[Proof of {\bf Theorem \ref{thm:1-2}}]
	By {\bf Assumption \ref{ass:1-2}} on the initial data $u_0(x)$, we can construct a non-increasing function $\widetilde{u}_0(x)$ such that $\widetilde{u}_0(x)\leq u_0(x)$ for all $x\in\mathbb{R}$ and
	\begin{eqnarray*}
		\widehat{u}_0(x)=\left\{
		\begin{array}{ll}
			c_0, &\textnormal{if $x\leq-R_0-1$},\\
			0, &\textnormal{if $x\geq -R_0$},
		\end{array}
		\right.
	\end{eqnarray*}
	
	\noindent for some small $0<c_0\ll1$ and some large $R_0\gg1$.
	
	Let $v(t,x)$ be the solution of the following problem:
	\begin{eqnarray*}
		\left\{
		\begin{array}{l}
			v_t+(-\Delta)^sv=0,\quad t>0, \ x\in\mathbb{R},\\
			v(0,x)=\widehat{u}_0(x),\quad x\in\mathbb{R}.
		\end{array}
		\right.
	\end{eqnarray*}
	
	Since $f(u)\geq0$ for all $u\in[0,1]$, it's easy to see that $v(t,x)$ is a subsolution to the problem \eqref{eqn:1-1}. By the comparison principle, we have
	\[ v(t,x)\leq u(t,x),\qquad\forall t>0,\ x\in\mathbb{R}. \]
	
	Let $p_s(t,x)$ be the heat kernel for $(-\Delta)^s$, then we have
	\begin{eqnarray*}
		v(t,x)&=&\int_{\mathbb{R}} \widehat{u}_0(x-y)p_s(t,y) dy,\qquad\forall t>0,\ x\in\mathbb{R}.
	\end{eqnarray*}
	
	For the heat kernel for $(-\Delta)^s$, it's well known that there exists some constant $1>C_1>0$ such that
	\begin{eqnarray*}
		\frac{C_1}{t^{\frac{1}{2s}}[1+|t^{-\frac{1}{2s}} x|^{1+2s} ] }\leq p_s(t,x)\leq \frac{C_1^{-1}}{t^{\frac{1}{2s}}[1+|t^{-\frac{1}{2s}} x|^{1+2s} ] },\qquad\forall t>0,\ x\in\mathbb{R}.
	\end{eqnarray*}
	
	So we get
	\begin{eqnarray}
		v(t,x)&\geq&\int_{\mathbb{R}} \widehat{u}_0(x-y)\cdot 	\frac{C_1}{t^{\frac{1}{2s}}[1+|t^{-\frac{1}{2s}} y|^{1+2s} ] } dy \nonumber \\
		&\geq&\int_{x+R_0+1 }^{+\infty} \frac{ c_0\cdot C_1  }{t^{\frac{1}{2s}}[1+|t^{-\frac{1}{2s}} y|^{1+2s} ] }  dy\nonumber \\
		&=&\int_{t^{-\frac{1}{2s}}(x+R_0+1) }^{+\infty} \frac{ c_0\cdot C_1  }{1+|z|^{1+2s} }  dz \label{esti-diff}
	\end{eqnarray}
	
	In particular, we have
	$$ u(1,x)\ge \int_{x+R_0+1}^{+\infty} \frac{ c_0\cdot C_1  }{1+|z|^{1+2s}  }  dz.$$

	Thus for $x>1$, we may find a constant $C_2>0$ such that 
	$$ u(1,x)\ge \frac{C_2}{x^{2s}}.$$
	
	As a result, we can find a small enough $d>0$ such that
	\begin{equation}\label{ac:def-v0-1}
	u(1,x)\geq v(1,x) \ge \tilde{u}_0(x):= 
	\begin{cases}
	d &\text{ for }  x \le 1\\
	\frac{d}{x^{2s}} &\text{ for } x \ge 1.
	\end{cases}
	\end{equation}
	
	Hence, from the comparison principle and up to a shift in time, we only need to get the lower estimate for the case that $u(t,x)$ is the solution to the problem \eqref{eqn:1-1} with the initial data $ \tilde{u}_0$. Since $\tilde{u}_0(x)$ is decreasing, it's easy to see that $u(t,x)$ is decreasing with respect to $x$. Let $\displaystyle \lambda_0:=\int_{\frac{1}{2} }^{+\infty} \frac{ c_0\cdot C_1  }{1+|z|^{1+2s} }  dz$, $\displaystyle x_B(t):=\frac{t^{\frac{1}{2s} }}{4}$, and $x_{\lambda_0}(t)$ be such that $u(t,x_{\lambda_0}(t))=\lambda_0$. From \eqref{esti-diff}, then there exist $T_{\lambda_0}\gg1$ such that for all $t\ge T_{\lambda_0}$ we have, 
$$v(t,x_B(t))\ge \int_{\frac{1}{4}+t^{-\frac{1}{2s}}(R_0+1) }^{+\infty} \frac{ c_0\cdot C_1  }{1+|z|^{1+2s}  }  dz\ge \int_{\frac{1}{2} }^{+\infty} \frac{ c_0\cdot C_1  }{1+|z|^{1+2s}  }  dz =\lambda_0. $$

Since $\widehat{u}_0(x)$ is decreasing, it's easy to see that $v(t,x)$ is decreasing with respect to $x$. Since $u(t,x)\geq v(t,x)$ for all $t\geq0$ and all $x\geq0$, and $u(t,x)$ and $v(t,x)$ are decreasing with respect to $x$, then we can get $\frac{t^{\frac{1}{2s} }}{4}=x_B(t)\le x_{\lambda_0}(t)$. The above argument holds as well for any $0<\lambda \leq \lambda_0$
which provides the lower estimate.

It  remains to obtain a similar bound for a given $\lambda_0<\lambda <1$. To obtain the bound we can argue as in \cite{Alfaro2017}. 
So first let us prove an invasion lemma on the solution of the Cauchy problem \eqref{eqn:1-1}. Namely, 
\begin{prop}\label{prop:level-set}
For any $0<s<1$, assume that the nonlinearity $f$ satisfies {\bf Assumption \ref{ass:1-1}}, and the initial data $u_0(x)$ satisfies {\bf Assumption \ref{ass:1-2}}. Assume $\beta>1$ and $\frac{\beta}{2s(\beta-1)}>1$, and let $u(t,x)$ be the solution to the problem \eqref{eqn:1-1} with the initial data $u_0(x)$.
Then, for any $A \in \R$, 
\begin{equation}\label{invasion}
\lim_{t\to \infty}u(t,x)=1 \quad \text{ uniformly in  } (-\infty,A],
\end{equation}
and, for any $\lambda \in (0,1)$,  
\begin{equation}\label{acc-prop}
\lim_{t\to\infty}\frac{x_{\lambda}(t)}{t}=+\infty.
\end{equation} 

\end{prop}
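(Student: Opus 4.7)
My plan is to establish the two claims in sequence, starting from the crude lower bound obtained in the proof of Theorem \ref{thm:1-2}: namely $u(t,x) \ge \lambda_0$ on $(-\infty, t^{1/(2s)}/4]$ for all $t \ge T_{\lambda_0}$ (compare estimate \eqref{esti-diff}).

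For the invasion claim, fix $A \in \mathbb{R}$ and $\lambda \in (0,1)$. By the comparison principle, $u(T+\cdot,\cdot)$ dominates the solution $\underline u$ of \eqref{eqn:1-1} starting at time $T$ from the step function $\lambda_0 \mathbf{1}_{(-\infty,M]}$ with $M := T^{1/(2s)}/4$. Since this initial datum is non-increasing, so is $\underline u(t,\cdot)$, and its left limit $a(t) := \lim_{x \to -\infty} \underline u(t,x)$ formally satisfies the ODE $\dot a = f(a)$ with $a(0) = \lambda_0$, hence $a(t) \uparrow 1$ by monostability. To upgrade this pointwise limit into uniform convergence on $(-\infty, A]$, I would build a subsolution of the form $\psi(t,x) = a(t)\, \eta((M-x)/R) - \delta e^{\mu t}$, with $\eta$ a smooth monotone cutoff equal to $1$ on $[1,+\infty)$ and vanishing on $(-\infty,0]$, and $R := M/2$. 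On the plateau $\{M - x \ge R\}$ the nonlocal term $(-\Delta)^s \psi$ is of order $a(t)\, R^{-2s}$, which is dominated by $f(\psi) - \psi_t$ once $M$ (equivalently $T$) is large enough; comparison then yields $u(T+\tau_\lambda, x) \ge \lambda$ on $(-\infty, A]$ with $\tau_\lambda$ the (finite) ODE time needed to reach a level slightly above $\lambda$.

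For the superlinear spread, the case $s<1/2$ is immediate since Theorem \ref{thm:1-2} already gives $x_\lambda(t) \ge C' t^{1/(2s)}$ with $1/(2s) > 1$. For $s \ge 1/2$, I would argue by contradiction. Suppose there exist $\lambda^* \in (0,1)$, $M^* > 0$, and $t_n \to \infty$ with $x_{\lambda^*}(t_n) \le M^* t_n$. The recentered solutions $u_n(t,x) := u(t_n + t,\, x_{\lambda^*}(t_n) + x)$ are equibounded and locally equi-continuous by parabolic regularity for the fractional heat semigroup, so a subsequence converges locally uniformly to an entire solution $u_\infty$ of \eqref{eqn:1-1} with $u_\infty(0,0) = \lambda^*$ and monotone in $x$. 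The invasion part applied at times $t_n$, combined with $x_{\lambda^*}(t_n) \to +\infty$ (which holds by Theorem \ref{thm:1-2}), gives $\lim_{x \to -\infty} u_\infty(t,x) = 1$, while Theorem \ref{thm:1-1} yields $\lim_{x \to +\infty} u_\infty(t,x) = 0$. A sliding/compactness argument then reduces $u_\infty$ to a traveling wave with speed $c \le M^*$, contradicting the non-existence of traveling fronts for parameters satisfying $\beta/(2s(\beta-1)) > 1$ (see \cite{Gui2015}).

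The principal obstacle is the subsolution construction in the invasion step: the long-range tails of $(-\Delta)^s$ create a nontrivial coupling between the plateau of $\underline u_0$ and its zero region, generating a nonlocal force inside the plateau that decays only as $M^{-2s}$. Since $a(0) = \lambda_0$ may be small and $f$ is degenerate at $0$, one must verify with care that $f(a(t))$ dominates this force for all $t \in [0,\tau_\lambda]$, perhaps by first waiting a small time so that $a(t)$ is bounded below by a universal constant. A secondary difficulty is the traveling-wave extraction in the superlinear step for $s \ge 1/2$, which requires a Liouville-type rigidity for entire monotone-in-$x$ solutions of the fractional equation in order to conclude that the limit $u_\infty$ is genuinely a planar front and not a more general entire object.
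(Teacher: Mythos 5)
Your route differs substantially from the paper's: the paper obtains both claims by approximating $f$ from below by bistable nonlinearities $g_n \le f$ whose traveling-front speeds $c_n$ diverge (this is the paper's Proposition~\ref{prop:cinfinie}), then invokes the global asymptotic stability of fractional bistable fronts from \cite[Theorem 3.1]{Achleitner2015} to conclude. Both halves of your plan contain gaps that the paper's approach avoids. In the invasion step, your subsolution $\psi(t,x)=a(t)\eta((M-x)/R)-\delta e^{\mu t}$ is verified only on the plateau $\{M-x\ge R\}$, but the comparison principle also requires the subsolution inequality wherever $\psi>0$, in particular on the transition zone $\{0<\eta<1\}$. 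There $\psi$ ranges through the degenerate interval, $f(\psi)$ collapses like $\psi^\beta$, and $\psi_t=f(a)\eta-\delta\mu e^{\mu t}$ leaves an excess $f(a)\eta-f(a\eta)$ that is generically positive and of order $f(a)$ at intermediate values of $\eta$; this excess does not shrink as $M\to\infty$, while the available budget $(\mu-L)\delta e^{\mu t}$ is squeezed to be small by the competing constraint $\delta e^{\mu\tau_\lambda}<1-\lambda$. Waiting so that $a(t)$ is bounded below does not repair this, since the obstruction sits where $a\eta$, not $a$, is small. The paper sidesteps the difficulty entirely by comparing with an exact bistable front, for which there is nothing to verify.

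In the superlinear-spread step for $s\ge \tfrac12$, your recentering argument needs (i) a Liouville-type classification of entire, monotone-in-$x$ solutions of the monostable fractional equation as traveling waves, which is not available here (and which you flag yourself), and (ii) an upgrade of the invasion estimate from fixed sets $(-\infty,A]$ to the diverging recentered sets $(-\infty,x_{\lambda^*}(t_n)+A]$ in order to conclude $u_\infty(t,-\infty)=1$; the invasion statement alone does not deliver that uniformity. The paper's Proposition~\ref{prop:cinfinie} is engineered to avoid both difficulties: the $U_n$ are exact fronts for bistable approximations, so no rigidity is needed, and a Helly/compactness argument on the $U_n$ themselves forces $c_n\to\infty$, which combined with the bistable-front subsolution gives $\liminf_{t\to\infty}x_\lambda(t)/t\ge c_n$ for every $n$. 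I recommend adopting that bistable-approximation mechanism rather than trying to close the Liouville gap directly.
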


Let us postpone for a moment the proof of {\bf Proposition \ref{prop:level-set}} and finish the proof of {\bf Theorem \ref{thm:1-2}}. Let us denote by $w(t,x)$ the solution of \eqref{eqn:1-1} starting from a nonincreasing $w_0$ such that
 \begin{equation}\label{initial-data-w}
w_0(x)= \begin{cases} \lambda_0 &\text{
if } x\leq -1
\\
0 &\text{ if  }x\geq 0.
\end{cases}
\end{equation}

It follows from  {\bf Proposition \ref{prop:level-set}}  that there is a time $\tau_{\lambda}>0$  such that
\begin{equation}
\label{w-grand}
 w(\tau_{\lambda},x)>\lambda, \quad \forall x\leq 0.
 \end{equation}
 
 On the other hand, since $\widehat{u}_0(x)$ is decreasing, it's easy to see that $v(t,x)$ is also decreasing with respect to $x$. Since $u(t,x_B(t))\geq\lambda_0$, then
 \begin{eqnarray*}
 	u(t,x)\geq v(t,x)\geq \lambda_0,\qquad\forall x\leq x_B(t).
 \end{eqnarray*}
 
  So it follows from  \eqref{initial-data-w} that
 $$
 u(T,x)\ge w_0(x-x_{\lambda_0}(T)),\quad \forall T\geq 0, \forall x\in \R.
 $$
 
So the comparison principle yields
 $$
 u(T+\tau,x)\geq w(\tau,x-x_{\lambda_0}(T)),\quad \forall T\geq 0, \forall \tau \geq 0, \forall x\in \R.
 $$
 
  In view of \eqref{w-grand}, this implies that
 $$
 u(T+\tau_{\lambda},x)>\lambda,\quad \forall T\geq 0, \forall x\leq x_{\lambda_0} (T).
 $$
 
 Hence, for $t\geq \tau _\lambda$,   the above implies  
 $$
 x_\lambda(t)\geq x_{\lambda_0}(t-\tau _\lambda)=\frac{(t-\tau _\lambda)^{\frac{1}{2s} }}{4} \geq \underline C t^{\frac{1}{2s}},
 $$
provided $t\geq T_\lambda '$, with $T_\lambda '>\tau _\lambda$ large enough. This concludes the proof of the lower estimate.

	In summary, we can conclude that for any $\lambda\in(0,1)$,  then there exist some constants $T_\lambda'>0$ and $C'(\lambda)>0$ such 
	\begin{eqnarray*}
		x_\lambda(t)\geq C'(\lambda) t^{\frac{1}{2s}  },\qquad\forall t>T_\lambda'.
	\end{eqnarray*}
	
\end{proof}

In order to prove {\bf Proposition \ref{prop:level-set}}, let us first establish the following result.
\begin{prop}[Speeds of a sequence of  bistable traveling waves]\label{prop:cinfinie}
For any $0<s<1$, assume that the nonlinearity $f$ satisfies {\bf Assumption \ref{ass:1-1}}, and the initial data $u_0(x)$ satisfies {\bf Assumption \ref{ass:1-2}}. Assume $\beta>1$ and $\frac{\beta}{2s(\beta-1)}>1$. Let $(g_{n})=(g_{\theta _n })$ be a sequence of bistable nonlinearities such that  $g_n\le  g_{n+1}\le f$ and $g_n\to f$. Let $(c_n,U_n)$ be the associated sequence of traveling waves. Then
$$
\lim_{n\to \infty }c_n=+\infty.
$$
\end{prop}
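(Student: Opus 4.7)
The plan is to argue by contradiction, exploiting the fact that no traveling front exists for the monostable $f$ under Assumption \ref{ass:1-1} in the regime $\frac{\beta}{2s(\beta-1)}>1$, as established by Gui-Huan \cite{Gui2015}.

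First I would show that the sequence $(c_n)$ is non-decreasing. Since $g_n\le g_{n+1}$ are both bistable and the speed of a bistable front depends monotonically on the nonlinearity, a standard comparison argument---noting that $U_n(x-c_n t)$, which solves the equation with $g_n\le g_{n+1}$, is a subsolution for the equation with $g_{n+1}$, and sliding against the front of $g_{n+1}$---yields $c_n\le c_{n+1}$. Hence $c_n\to c_\infty\in(0,+\infty]$, and it remains to exclude the case $c_\infty<+\infty$.

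Suppose by contradiction that $c_\infty<+\infty$. After a horizontal translation, normalize each profile by $U_n(0)=\tfrac12$; this is possible since $U_n$ is strictly monotone from $1$ at $-\infty$ to $0$ at $+\infty$. The profiles satisfy
$$(-\Delta)^s U_n(x)-c_n U_n'(x)=g_n(U_n(x)),\qquad x\in\R,$$
are uniformly bounded in $[0,1]$, and monotone. Standard interior regularity for stationary fractional equations with bounded right-hand side (via the Caffarelli-Silvestre extension or direct fractional Schauder estimates) provides a uniform $C^{1,\alpha}_{\mathrm{loc}}$ bound on $(U_n)$. Combined with Helly's selection theorem, one extracts a subsequence converging in $C^1_{\mathrm{loc}}$ to a monotone nonincreasing $U_\infty$ with $U_\infty(0)=\tfrac12$. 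Using $g_n\to f$ uniformly on $[0,1]$ together with dominated convergence for the nonlocal integral, I would pass to the limit and obtain
$$(-\Delta)^s U_\infty(x)-c_\infty U_\infty'(x)=f(U_\infty(x)),\qquad x\in\R.$$
Monotonicity forces $U_\infty(\pm\infty)$ to be zeros of $f$, and the normalization together with the monostable structure of $f$ gives $U_\infty(-\infty)=1$ and $U_\infty(+\infty)=0$. Thus $(c_\infty,U_\infty)$ would be a traveling front for the monostable problem with nonlinearity $f$, contradicting the non-existence of such fronts under our standing assumption. Therefore $c_n\to+\infty$.

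The main obstacle will be the passage to the limit in the nonlocal term $(-\Delta)^sU_n$: the kernel $|y|^{-1-2s}$ is simultaneously singular at the origin and slowly decaying at infinity, so one must combine a uniform local $C^{1,\alpha}$ bound (to handle the principal-value singularity) with the global $[0,1]$ bound (to dominate the tails) in order to justify dominated convergence. A secondary but essential subtlety is confirming non-triviality of the limit: the normalization $U_n(0)=\tfrac12$ ensures $U_\infty$ takes the value $\tfrac12$ at the origin, and together with monotonicity this rules out the degenerate cases $U_\infty\equiv 0$ or $U_\infty\equiv 1$ and forces $U_\infty$ to be a genuine front connecting the two equilibria.
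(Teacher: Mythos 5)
Your proposal follows essentially the same route as the paper's own proof: monotonicity of $c_n$ via sliding, contradiction hypothesis $c_\infty<\infty$, normalization $U_n(0)=\tfrac12$, compactness (Helly plus local regularity), passage to the limit to obtain a monostable traveling front for $f$, and contradiction with the non-existence result of Gui--Huan in the regime $\frac{\beta}{2s(\beta-1)}>1$. The only cosmetic difference is that you invoke fractional Schauder / Caffarelli--Silvestre extension estimates for the uniform $C^{1,\alpha}_{\mathrm{loc}}$ bound, whereas the paper extracts the uniform bounds on $U_n',U_n''$ directly from the equation using the lower bound $c_n\ge c_0>0$; both are valid and serve the same purpose.
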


\begin{proof} Since $g_{n+1}\geq g_n$ it follows from standard sliding techniques  \cite{Berestycki1991a, Chen1997,Coville2005,Coville2006,Coville2007} that $c_{n+1}\geq c_n$.  Assume now  by contradiction that $c_n \nearrow \bar c$ for some $\bar c\in \R$.  Observe that  since $g_n\to f$ and $\int_0^1f(s)\,ds>0$, we have  $c_n\ge c_0>0$ for $n$ large enough, says, $n\ge n_0$. As a consequence, for all $n\ge n_0$, $U_n$ is smooth and since any translation of $U_n$ is a still a solution, without loss of generality, we can assume  the normalization $U_n(0)=1/2$. Now,  thanks to Helly's Theorem \cite{Brunk1956} and up to extraction, $U_n$ converges to a monotone function $\bar U$ such that $\bar U(0)=\frac{1}{2}$. Also, since $c_n<\bar c$, from the equation we get an uniform bound on $U^{'}_n, U^{''}_n$ and up to extraction, $U_n$ also  converges in  $C^2_{loc}(\R)$, and the limit has to be $\bar U'$. As a result, $\bar U$ is monotone and solves 
$$\begin{cases}
(-\Delta^{s})\bar U+\bar c\bar U'+f(\bar U)=0 \quad \text{ on } \R,\\
\bar U(-\infty)=1,\quad \bar U(0)=\frac{1}{2}, \quad \bar U(\infty)=0.
\end{cases}
$$ 

In other words, we have constructed a monostable traveling wave under assumption that  $\beta>1$ and $\frac{\beta}{2s(\beta-1)}>1$ , which is  a contradiction with the result in \cite{Gui2015}. 
\end{proof}

Equipped with this technical result we can establish {\bf Proposition \ref{prop:level-set}}.
\begin{proof} 
First, we prove \eqref{invasion} for the particular case where the initial datum $u_0$ is a smooth decreasing function such that
\begin{equation}\label{d0}
u_0(x)=\begin{cases}
d_0 &\text{ for }  x \le -1\\
0 &\text{ for }  x \ge 0,
\end{cases}
\end{equation}
for an arbitrary $0<d_0<1$. Since $u_0$ is  nonincreasing, we deduce from the comparison principle that, for all $t>0$, the function $u(t,x)$ is still decreasing in $x$.

Let us now extend $f$  by $0$ outside  the interval $[0,1]$. From \cite{Achleitner2015} and {\bf Proposition \ref{prop:cinfinie}}, there exists $0<\theta<d_0$ and  a Lipschitz bistable function $g\le f$ --- i.e. $g(0)=g(\theta)=g(1)=0$,  $g(s)<0$ in $(0,\theta)$, $g(s)>0$ in $(\theta,1)$, and $g'(0)<0$, $g'(1)<0$, $g'(\theta)>0$---  such that there exists a smooth decreasing  function $U_\theta$ and  $c_\theta>0$ verifying
\begin{align*}
&(-\Delta)^{s}U_\theta +c_{\theta} U_\theta'+g(U_\theta)=0 \quad \text{ on } \R,    \\
&U_\theta(-\infty)=1, \qquad U_\theta(\infty)=0.
\end{align*}

Let us now consider $v(t,x)$ the solution of the Cauchy problem
\begin{align*}
&\partial_t v(t,x) =-(-\Delta)^s v(t,x) + g(v(t,x)) \quad \text{ for  } t>0, x\in \R,    \\
&v(0,x)=u_0(x).
\end{align*}
Since $g\le f$, $v$ is a subsolution of the Cauchy problem \eqref{eqn:1-1} and by the comparison principle, $v(t,x)\le u(t,x)$ for all $t>0$ and $x \in \R$.

Now, thanks to the global asymptotic stability result \cite[Theorem 3.1]{Achleitner2015}, since $d_0>\theta$, then we know that there exists $\xi\in\R, C_0>0$  and $\kappa>0$ such that for all $t\ge 0$
$$
\|v(t,\cdot)-U_{\theta}(\cdot-c_\theta t+\xi)\|_{L^\infty} \le C_0 e^{-\kappa t}.
$$

Therefore,  for all $t>0$ and $x \in \R$, we have
$$
u(t,x)\ge v(t,x)\ge U_{\theta}(x-c_\theta t+\xi)-C_0e^{-\kappa t}.
$$

Since $c_\theta>0$,  by sending $t \to \infty$, we get $1\ge \liminf_{t\to \infty}u(t,x)\ge \lim_{t\to \infty}[U_{\theta}(x-c_\theta t+\xi)-C_0e^{-\kappa t}]= 1$. As a result, for all $x\in\mathbb{R}$, we have $1\ge \limsup_{t\to \infty} u(t,x)\ge \liminf_{t\to \infty} u(t,x)= 1$, which implies that $u(t,x)\to 1$  as $t \to \infty$.  Since $u(t,x)$ is decreasing in $x$, then the convergence is uniform on any set $(-\infty,A]$. This concludes the proof of \eqref{invasion} for our particular initial datum.

For a generic initial data satisfying {\bf Assumption \ref{ass:1-2}}, we can always, up to a shift in space, construct a smooth decreasing $\tilde{u_0}$ satisfying \eqref{d0} and $\tilde{u_0}\leq u_0$. Since the solution $\tilde u(t,x)$ of the Cauchy problem starting from $\tilde{u_0}$ satisfies \eqref{invasion}, so does $u(t,x)$ thanks to the comparison principle. 
\end{proof}

\section{Another better lower bound on the speed of the super level sets}\label{sec3}

Here we prove another lower bound on the speed of $x_\lambda(t)$ when $1<\beta<2$ and$\frac{1}{2s(\beta-1)}>1$ (notice that $\frac{1}{2s(\beta-1)}>\frac{1}{2s}$ if and only if $1<\beta<2$). In the whole of this section, let's assume the conditions in {\bf Theorem \ref{thm:1-3}} hold. As above to measure the  acceleration, we use a subsolution that fills the space with a superlinear speed. The construction of this subsolution is an adaptation of  the one proposed by Alfaro and Coville  \cite{Alfaro2017} for a  nonlocal diffusion with an integrable kernel. It essentially contains three steps.

\medskip

\noindent {\bf Step one.} It consists in using the diffusion to gain an algebraic tail at time $t=1$.

 From the proof of {\bf Theorem \ref{thm:1-2}}, we have known that we can find a small enough $d>0$ such that
\begin{equation}\label{ac:def-v0}
u(1,x)\geq v(1,x) \ge v_0(x):= 
\begin{cases}
d &\text{ for }  x \le 1\\
\frac{d}{x^{2s}} &\text{ for } x \ge 1.
\end{cases}
\end{equation}
Hence, from the comparison principle and up to a shift in time, it is enough to 
prove the lower estimate for $u(t,x)$ which is the solution starting from the initial data $v_0$, which we do below.

\medskip

\noindent {\bf Step two.} Here we construct explicitly the subsolution that we are considering.

Following Alfaro-Coville \cite{Alfaro2017} let us consider the  function $g(y):=y(1-By)$, with $B>\frac{1}{2d}$, it's easy to see that  $g(y)\le 0$ if and only if $0\leq y\leq \frac{1}{B} $, and  $g(y)\le g(\frac{1}{2B})=\frac{1}{4B}<d$ for all $y\in\mathbb{R}$.

As in the previous subsection,  for any $\gamma>0$, let  $w(\cdot,x)$ denote the solution to the Cauchy problem
\begin{eqnarray*}
	\left\{
	\begin{array}{l}
		\displaystyle \frac{dw}{dt}(t,x)=\gamma [w(t,x)]^{\beta},\\
		w(0,x)=v_0(x).
	\end{array}
	\right.
\end{eqnarray*}

That is
 $$
 w(t,x)=\frac{1}{\left[ [v_0(x)]^{1-\beta} -\gamma(\beta -1) t\right]^{\frac{1}{\beta -1}}}, 
 $$
where $v_0$ is defined in  \eqref{ac:def-v0}. 

Notice that $w(t,x)$ is not defined for all times. When $x\le 1$, $w(t,x)$ is defined for $t\in[0,\frac{1}{d^{\beta -1}\gamma(\beta-1)})$, whereas for $x>1$,   $w(t,x)$ is defined for $t\in\left[0,T(x):=\frac {x^{2s(\beta -1)}}{d^{\beta -1}\gamma(\beta-1)}\right)$.  Let us define
 \begin{equation}\label{ac-def-xB}
 x_{B}(t):=d^{\frac{1}{2s}} \left[\left(2B \right)^{\beta-1}+\gamma (\beta -1)t \right]^{\frac{1}{2s(\beta-1)}}.
 \end{equation}
 
Since $B>\frac{1}{2d}$ and $\beta>1$, then $x_B(t)>1$ and $w(t,x_B(t))=\frac{1}{2B}$. For $x<1$ and $0<t<\frac{1}{d^{\beta-1}\gamma(\beta-1)}$, since $v_0(x')=d$ for all $x'\leq 1$, then we have $\partial_xw(t,x)=\partial_{xx} w(t,x)=0$. For  $x>1$ and $0< t< T(x)$, we compute
	\begin{eqnarray*}
		\partial_xw(t,x)&=&\frac{1}{1-\beta}\cdot [ [v_0(x)]^{1-\beta}-\gamma(\beta-1)t  ]^{\frac{1}{1-\beta} -1}\cdot (1-\beta)[v_0(x)]^{-\beta}\cdot v_0'(x)\\
		&=&-2d^{1-\beta}s[w(t,x)]^\beta\cdot x^{2s\beta-2s-1}\\
		&<&0\\
		\partial_{xx} w(t,x)&=&-2d^{1-\beta}s\cdot[ \beta [w(t,x)]^{\beta-1}\cdot \partial_x w(t,x)  \cdot x^{2s\beta-2s-1}+[w(t,x)]^\beta\cdot (2s\beta-2s-1)x^{2s\beta-2s-2}  ]\\
		&=&-2d^{1-\beta}s\cdot\left[ \beta [w(t,x)]^{\beta-1}\cdot \left(-2d^{1-\beta}s[w(t,x)]^{\beta}\cdot x^{2s\beta-2s-1}\right)  \cdot x^{2s\beta-2s-1}\right.\\
		&&\left.+[w(t,x)]^{\beta}\cdot (2s\beta-2s-1)x^{2s\beta-2s-2}  \right]\\
		&=&2d^{1-\beta}s [w(t,x)]^{\beta}\cdot x^{2s\beta-2s-2}\cdot[ 2d^{1-\beta}s\beta\cdot [w(t,x)]^{\beta-1}x^{2s\beta-2s}+2s+1-2s\beta   ]\\
		&=&2d^{1-\beta}s [w(t,x)]^{\beta}\cdot x^{2s\beta-2s-2}\cdot[ 2d^{1-\beta}s\beta\cdot [[v_0(x)]^{1-\beta}-\gamma(\beta-1)t]^{-1} x^{2s\beta-2s}+2s+1-2s\beta   ]\\
		&>&2d^{1-\beta}s [w(t,x)]^{\beta}\cdot x^{2s\beta-2s-2}\cdot[ 2d^{1-\beta}s\beta\cdot [v_0(x)]^{\beta-1} x^{2s\beta-2s}+2s+1-2s\beta   ]\\
		&=&2d^{1-\beta}s [w(t,x)]^{\beta}\cdot x^{2s\beta-2s-2}\cdot\left[ 2d^{1-\beta}s\beta\cdot \left(\frac{d}{x^{2s}} \right)^{\beta-1} x^{2s\beta-2s}+2s+1-2s\beta   \right]\\
		&=&2d^{1-\beta}s [w(t,x)]^{\beta}\cdot x^{2s\beta-2s-2}\cdot(2s+1)\\
		&>&0.
\end{eqnarray*}

In the first inequality of the computation of $\partial_{xx}w(t,x)$, we used the condition $\beta>1$ and $\gamma>0$. Hence, for any $t>0$, the function $w(t,\cdot)$ is decreasing and convex with respect to the variable $x$.

Let us now define the continuous function
 $$
 m(t,x):= 
 \begin{cases}
\frac{1}{4B} &\text{ for }  x \le x_{B}(t)\\
g(w(t,x)) &\text{ for } x_B(t)<x.
\end{cases}
$$

Note that by the construction of $m(t,x)$ for all $t>0$, it's easy to see that the function $m(t,x)$ is $C^{1,1}(\R)$ in $x$, and $$\partial_xm(t,x)=\partial_xw(t,x)(1-2Bw(t,x))^+,$$ which is a Lipschitz function.

Observe that: when $x>x_B(0)=(2d B)^{\frac{1}{2s}}>1$, we have $m(0,x)=g(w(0,x))=g(v_0(x))\leq v_0(x)$; when $x<1$, we have $m(0,x)=\frac{1}{4B}< d=v_0(x)$; when $1\leq x\leq x_B(0)=(2dB)^{\frac{1}{2s}}$, we have $v_0(x)=\frac{d}{x^{2s}} \geq\frac{d}{2dB}=\frac{1}{2B}>\frac{1}{4B}=m(0,x)$. Hence $m(0,x)\leq v_0(x)$ for all $x\in \R$. Let us now show that $m(t,x)$ is a subsolution  for some appropriate choices of $\gamma$ and $B$. 
 
By the definition of $m(t,x)$, we have  $\partial_tm(t,x)=\gamma w^{\beta}(t,x)(1-2Bw(t,x))^+$, therefore we get  
\begin{equation}\label{eq-dtm}
\partial _t m(t,x)\le 
 \begin{cases}
0  &\text{ for }  x \le x_B(t)-1\\
\gamma w^\beta(t,x)&\text{ for }  x_B(t)-1<x\\
\end{cases}
\end{equation}

Since $f$ satisfies $f(u)\geq r_1u^{\beta}$ as $u\rightarrow0^+$, then there exists a small $r_2>0$ such that $f(u)\geq r_2 u^\beta (1-u)$ for all $0\leq u \leq 1$. When $x\leq x_B(t)$, then $m(t,x)=\frac{1}{4B}$. Since $w(t,x_B(t))=\frac{1}{2B}$, then $f(m(t,x))\geq r_2[m(t,x)]^{\beta}[1-m(t,x)]=r_2\left[ \frac{1}{2}w(t,x_B(t))  \right]^{\beta}\left(1-\frac{1}{4B}\right)=\frac{r_2}{2^{\beta}}\left( 1-\frac{1}{4B} \right) [w(t,x_B(t))]^{\beta}$. When $x>x_B(t)$, since $0\leq g(y)\leq \frac{1}{4B}$ and $w(t,x)\leq \frac{1}{2B}$, then $f(m(t,x))\geq r_2[w(t,x)(1-Bw(t,x))  ]^{\beta}[1-g(w(t,x))]\geq r[w(t,x)(1-B\cdot \frac{1}{2B})  ]^{\beta}[1-\frac{1}{4B}]=\frac{r}{2^{\beta}}\left( 1-\frac{1}{4B} \right) [w(t,x)]^{\beta}$. In summary, we have
\begin{equation}\label{ac-eq-f}
f(m(t,x))\geq 
 \begin{cases}
C_0[w(t,x_B(t))]^\beta  &\text{ for }  x \le x_B(t)\\
C_0[w(t,x)]^{\beta} &\text{ for } x > x_B(t),
\end{cases}
\end{equation}
where  $C_0:=\frac{r}{2^\beta}\left(1- \frac{1}{4B}\right)$.

Let us now derive some estimate on the fractional diffusion term $ (-\Delta)^{s}m(t,x)$ on the three regions $x\leq x_B(t)-1$, $x_B(t)-1<x<x_B(t)+1$ and $x>x_B(t)+1$. For simplicity of the presentation, we dedicated a subsection to each region and let us start with the region $x\le x_B(t)-1$.

\subsubsection*{\underline{$\bullet$ When $x\le x_B(t)-1$}:}~\\

In this region of space,  we  claim that:
\begin{claimd} \label{subsolcla1}
	\begin{enumerate}
		\item[(a)] If $\frac{1}{2}< s<1$, then there exists $C_3>0$ such that for all $x\le x_B(t)-1$, we have
		$$ (-\Delta)^{s}m(t,x)\le -C_3 v_0'(x_B(t)) [v_0(x_B(t))]^{-\beta} [w(t,x_B(t)) ]^{\beta}.$$
		
		\item[(b)] If $0<s\le \frac{1}{2}$, for large enough $B\gg1$, then there exists $C_3>0$ such that for all $x\le x_B(t)-1$, we have
		$$ (-\Delta)^{s}m(t,x)\le \frac{C_3}{B^2}. $$ 
		
		    
	\end{enumerate}
	  
\end{claimd}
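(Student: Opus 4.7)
The key structural observation is that $m(t,\cdot)$ is constant equal to $1/(4B)$ on $(-\infty,x_B(t)]$, so for $x\le x_B(t)-1$ the kernel has no singularity at $y=x$ and the fractional Laplacian collapses to the one-sided integral
$$
(-\Delta)^{s} m(t,x) \;=\; \int_{x_B(t)}^{+\infty} \frac{\tfrac{1}{4B}-g(w(t,\xi))}{(\xi-x)^{1+2s}}\,d\xi.
$$
I would then exploit the elementary identity $\tfrac{1}{4B}-g(y)=B\bigl(y-\tfrac{1}{2B}\bigr)^{2}$, reflecting the fact that $y=1/(2B)$ is the maximum of $g$, and bound $\xi-x\ge \xi-x_B(t)+1$. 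Both parts of the claim are then reduced to an $x$-uniform estimate on
$$
I(t)\;:=\;B\int_0^{+\infty}\frac{\bigl(\tfrac{1}{2B}-w(t,x_B(t)+\tau)\bigr)^{2}}{(\tau+1)^{1+2s}}\,d\tau.
$$

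For the quantity $\delta(\tau):=\tfrac{1}{2B}-w(t,x_B(t)+\tau)$, I would combine two bounds: the trivial one $0\le \delta\le \tfrac{1}{2B}$, coming from $0\le w\le 1/(2B)$ on $[x_B(t),\infty)$, and the tangent line estimate $\delta(\tau)\le |\partial_\xi w(t,x_B(t))|\,\tau$, coming from the convexity of $w(t,\cdot)$ proved earlier in this subsection (together with the monotonicity of $\partial_\xi w$ that convexity also provides). The strategy is to split the integral at the crossover point $\tau^{*}:=1/\bigl(2B|\partial_\xi w(t,x_B(t))|\bigr)$, using the tangent bound for $\tau\le \tau^{*}$ and the trivial bound for $\tau>\tau^{*}$.

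For case (a) with $s>1/2$, multiplying the two bounds directly yields the single estimate $\delta^{2}\le |\partial_\xi w(t,x_B(t))|\,\tau/(2B)$, which turns the integrand into $\tau/(\tau+1)^{1+2s}$; this is integrable on $[0,\infty)$ precisely because $s>1/2$, giving $I(t)\le C(s)\,|\partial_\xi w(t,x_B(t))|$. Recalling the earlier identity $\partial_\xi w = v_0^{-\beta}v_0'\,w^{\beta}$ and evaluating at $x_B(t)$ recovers exactly the right-hand side of (a). Case (b) with $s\le 1/2$ is the principal obstacle, since the straightforward uniform bounds only deliver $C/B$ rather than $C/B^{2}$; the missing factor of $B$ must come from the divergence of $\tau^{*}$ in $B$. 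Using the explicit formula $|\partial_\xi w(t,x_B(t))| = 2s(2B)^{-\beta}d^{1-\beta}x_B(t)^{2s(\beta-1)-1}$ together with the lower bound $x_B(t)^{2s(\beta-1)}\ge d^{\beta-1}(2B)^{\beta-1}$ gives $\tau^{*}\gtrsim c(s,d)(2B)^{1/(2s)}$, which grows polynomially in $B$. Inserting this into the tail integral produces $(4B)^{-1}(\tau^{*})^{-2s}\lesssim 1/B^{2}$, and a matching estimate for the near piece $B|\partial_\xi w|^{2}(\tau^{*})^{2-2s}\lesssim 1/B^{2}$ (valid once $\tau^{*}\ge 1$, ensured by taking $B$ large) closes the argument. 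The delicate step throughout case (b) is tracking these $B$-dependencies with enough uniformity to capture the gain coming from $\tau^*\to\infty$.
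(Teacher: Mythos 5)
Your proposal is correct and follows a genuinely different route from the paper. For Case II ($s\le 1/2$) the paper factors $m(t,x_B(t))-m(t,y)$ as $\bigl[w(t,x_B(t))-w(t,y)\bigr]\bigl[1-B(w(t,x_B(t))+w(t,y))\bigr]$, drops the second bracket (which lies in $[0,1]$), and works with the \emph{linear} difference in $w$; it then splits the integral at a free parameter $R$, estimates the near piece via the Fundamental Theorem of Calculus and convexity and the far piece via the ceiling $w\le\tfrac{1}{2B}$, and optimizes $R\sim B^{1/(2s)}$. You instead retain the exact \emph{quadratic} structure through the identity $\tfrac{1}{4B}-g(y)=B\bigl(y-\tfrac{1}{2B}\bigr)^{2}$, bound $\delta(\tau)=\tfrac{1}{2B}-w(t,x_B(t)+\tau)$ by both the tangent $|\partial_\xi w(t,x_B(t))|\,\tau$ and the ceiling $\tfrac{1}{2B}$, and split at their crossover $\tau^{*}$, which plays the role of the paper's $R$ and has the same magnitude $\sim B^{1/(2s)}$. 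Your version is tidier for $s>\tfrac{1}{2}$ (multiplying the two bounds collapses everything to a single convergent integral) and, for $s\le\tfrac{1}{2}$, the quadratic bookkeeping makes the two halves balance automatically to $B^{-2}$. Worth noting: the paper's own written computation weakens the kernel exponent ($z^{1+2s}\mapsto z^{1+s}$ in $I_2$ and $y^{2s}\mapsto y^{s}$ in $I_1$), and after optimizing $R$ that only yields $O(B^{-3/2})$, which is not enough to dominate the $O(B^{-\beta})$ reaction term downstream when $\tfrac{3}{2}\le\beta<2$; your scheme, by tracking exponents exactly, recovers the $B^{-2}$ bound the claim actually asserts. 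One point to make explicit in a polished write-up: $\tau^{*}\gtrsim(2B)^{1/(2s)}$ holds uniformly in $t$ because $x_B(t)\ge d^{1/(2s)}(2B)^{1/(2s)}$ uniformly in $t$, so the threshold $B_0$ is $t$-independent, as the claim requires.
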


Note that the singularity here play a major role and the estimate strongly depends on the value of $s$. 
\begin{proof}
For $x\le x_B(t)-1$, since $m(t,y)=\frac{1}{4B}=m(t,x_B(t))$ for all $y\leq x_B(t)$, then we have 
\begin{eqnarray*}
(-\Delta)^{s}m(t,x)= \int_{x_B(t)}^{+\infty}\frac{m(t,x_{B}(t))-m(t,y)}{|x-y|^{1+2s}}dy.
\end{eqnarray*}

We now treat separately the following two situations: $\frac{1}{2}<s<1$, $0<s\le \frac{1}{2}$. 

{\bf Case I}: $\frac{1}{2}<s<1$. By using the Fundamental Theorem of Calculus, then we have
\begin{align*}
-(-\Delta)^{s}m(t,x)&=   \int_{x_B(t)}^{+\infty}\int_0^1\frac{(y-x_B(t))}{|x-y|^{1+2s}}\partial_xm(t,x_B(t)+\tau(y-x_B(t)))\,d\tau dy,\\
&=\int_{0}^{\infty}\int_0^1\frac{z}{|x-x_B(t)-z|^{1+2s}} \partial_xm(t,x_B(t)+\tau z) \,d\tau dz.
\end{align*} 

 Now, since $w(t,\cdot)$ is a positive, decreasing and convex function, for any $z,\tau>0$, then we have
\begin{eqnarray*}
	\partial_xm(t,x_B(t)+\tau z)&=&\partial_xw(t,x_B(t)+\tau z)\left(1 -2Bw(t,x_B(t)+\tau z)\right)\\
	&\geq&\partial_xw(t,x_B(t)).
\end{eqnarray*}

So we can obtain that
$$
(-\Delta)^{s}m(t,x)\ge \partial_xw(t,x_B(t)) \int_{0}^{\infty}\frac{z}{|x-x_B(t)-z|^{1+2s}}dz.
$$

For any $z>0$, since $x<x_B(t)-1 $, then $x-x_B(t)-z<-1-z<0$, which implies that $|x-x_B(t)-z|\geq |1+z|>0$. Since $\frac{1}{2}<s<1$, then we have  
$$\int_{0}^{\infty}\frac{z}{|x-x_B(t)-z|^{1+2s}}dz<C_3:=\int_{0}^{\infty}\frac{z}{|1+z|^{1+2s}}dz<+\infty.$$

As a result 
 \begin{equation*}
-(-\Delta)^{s}m(t,x)\ge C_3 \partial_x w(t,x_B(t))=-C_3 v_0'(x_B(t)) [v_0(x_B(t))]^{-\beta} [w(t,x_B(t)) ]^{\beta},\quad \forall x\leq x_B(t)-1, 
 \end{equation*}
 
 {\bf Case II}: $0<s\le \frac{1}{2} $. In this  situation, the previous argumentation fails and we argue as follows.
  Since $w(t,y)\geq0$ for all $t$ and all $y$, for any constant $R>1$ which will be determined later, we have
 \begin{eqnarray*}
 	(-\Delta)^{s}m(t,x)&=& \int_{x_B(t)}^{+\infty}\frac{m(t,x_{B}(t))-m(t,y)}{|x-y|^{1+2s}}dy \\
 	&=&\int_{x_B(t)}^{+\infty}\frac{w(t,x_B(t))-B[w(t,x_B(t))]^2-w(t,y)+B[w(t,y)]^2  }{|x-y|^{1+2s}}dy \\
 	&=&\int_{x_B(t)}^{+\infty} \frac{[w(t,x_B(t)) -w(t,y)  ][ 1-B[w(t,x_B(t))+w(t,y)   ] ]  }{|x-y|^{1+2s}} dy\\
 	&\leq&\int_{x_B(t)}^{+\infty} \frac{w(t,x_B(t)) -w(t,y)   }{|x-y|^{1+2s}} dy\qquad\textnormal{Since $w(t,y)\geq0$ }\\
 	&\leq&\int_{x_B(t)}^{+\infty} \frac{w(t,x_B(t)) -w(t,y)   }{|y-x_B(t)+1|^{1+2s}} dy\\
 	&=&\int_{0}^{+\infty} \frac{ w(t,x_B(t))-w(t, x_B(t)+z) }{|1+z|^{1+2s}} dz\\
 	&=&\int_{0}^{R} \frac{ w(t,x_B(t))-w(t, x_B(t)+z) }{|1+z|^{1+2s}} dz+\int_{R}^{+\infty} \frac{ w(t,x_B(t))-w(t, x_B(t)+z) }{|1+z|^{1+2s}} dz\\
 	&=&I_1+I_2.
 \end{eqnarray*}
 
Let us now estimate $I_1$ and $I_2$. Since $w(t,y)\geq0$ for all $t$ and all $y$, for $I_2$ we have 
 \begin{eqnarray}
 	I_2&\leq&\int_{R}^{+\infty} \frac{ w(t,x_B(t)) }{|1+z|^{1+2s}} dz \nonumber \\
 	&\leq &\frac{1}{2B}\cdot \int_R^{+\infty} \frac{1}{z^{1+s}  } dz \nonumber \\
 	&=&\frac{1}{2B}\cdot \frac{1}{s}\cdot \frac{1}{R^{s}}.\label{com1-1}
 \end{eqnarray}
 
 On the other hand, by using the Fundamental Theorem of Calculus and the convexity of $w(t,y)$ is convex with respect to $y$, we get for $I_1$
 \begin{align*}
 	I_1=\int_{0}^{R}\int_0^1 \frac{ -\partial_x w(t,x_B(t)+\tau z )\cdot z }{|1+z|^{1+2s}} d\tau dz, &\leq\int_{0}^{R}\int_0^1 \frac{ -\partial_x w(t,x_B(t) )\cdot z }{|1+z|^{1+2s}} d\tau dz  \\
 	&\le -\partial_x w(t,x_B(t)) \int_0^R \frac{z}{ |1+z|^{1+2s} } dz.  \\
 \end{align*}
Thus, by using the definition of $\partial_xw(t,x_B(t))$, $R>1$ and since $|y|^{2s}>|y|^s$ in $(1,R)$ we get 
%
%
\begin{eqnarray*}
	I_1&\leq& -\partial_x w(t,x_B(t)) \int_0^R \frac{z}{ |1+z|^{1+2s} } dz.  \\
	&\leq& -\partial_x w(t,x_B(t))\int_1^{2R} \frac{1}{y^{2s}} dy\\
	&\leq& -\partial_x w(t,x_B(t))\int_1^{2R} \frac{1}{y^{s}} dy\\
	I_1&\le &2d^{1-\beta}s\left(\frac{1}{2B}\right)^\beta\cdot [x_B(t)]^{2s\beta-2s-1} \cdot \frac{1}{1-s}\cdot (2R)^{1-s},
\end{eqnarray*}

%
%

 which  using that $x_B(t)\geq d^{\frac{1}{2s}} (2B)^{\frac{1}{2s}}$ and  $2s\beta-2s-1<0$  enforces that
 \begin{eqnarray*}
 	I_1&\leq& 2d^{1-\beta}s\left(\frac{1}{2B}\right)^\beta\cdot [d^{\frac{1}{2s}} (2B)^{\frac{1}{2s}}]^{2s\beta-2s-1} \cdot \frac{1}{1-s}\cdot (2R)^{1-s}\\
 	&=&C_{3,1} B^{-\left( 1+\frac{1}{2s} \right)} R^{1-s}.
 \end{eqnarray*}
 
 Combining the latter estimate with \eqref{com1-1}, then we have
 \begin{eqnarray*}
 	(-\Delta)^{s}m(t,x)\leq I_1+I_2\leq C_{3,1}B^{-\left( 1+\frac{1}{2s} \right)} R^{1-s}+\frac{1}{2B}\cdot \frac{1}{s}\cdot \frac{1}{R^{s}}.
 \end{eqnarray*}
 
 By taking $R$ such that $C_{3,1} B^{-\left( 1+\frac{1}{2s} \right)} R^{1-s}=\frac{1}{2B}\cdot \frac{1}{s}\cdot \frac{1}{R^{s}}$, that is, $R=\frac{1}{2s C_{3,1}} B^{\frac{1}{2s}}$,  we then achieve 
 \begin{eqnarray*}
 (-\Delta)^{s}m(t,x)\leq \frac{C_3}{B^2}.
 \end{eqnarray*}

%
%

\end{proof}

Let us now obtain some estimate in the region $x\ge x_B(t)+1$.

\subsubsection*{\underline{$\bullet$ When $x\ge x_B(t)+1$}:}~\\

In this region,  we  claim that 

\begin{claimd}\label{subsolcla2}
	\begin{enumerate}
	    \item[(a)] If $\frac{1}{2}<s<1$,  then there exists positive constant $C_4$ such that for all $x\ge x_B(t)+1$, we have
	    $$ (-\Delta)^{s}m(t,x)\le -C_4 v_0'(x) [v_0(x)]^{-\beta}[w(t,x)]^{\beta}.$$ 
	    
	    \item[(b)] If $0<s\le\frac{1}{2}$, for large enough $B\gg1$, then there exists positive constant $C_4$ such that for all $x\ge x_B(t)+1$ , we have
	    $$ (-\Delta)^{s}m(t,x)\le -C_4\partial_xw(t,x)+C_4[w(t,x)]^{1-2s+2s\beta} x^{2s(2s\beta-2s-1)}.$$ 
	    
%
	\end{enumerate}
	
\end{claimd}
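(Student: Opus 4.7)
The plan is to adapt the strategy of Claim \ref{subsolcla1}, splitting $(-\Delta)^s m(t,x)$ into pieces matched to the geometry of $m(t,\cdot)$. The new feature is that for $x \geq x_B(t)+1$ the base point $x$ sits in the region where $m$ is strictly decreasing, so the singularity at $y=x$ must be handled directly. I will write
$$
(-\Delta)^s m(t,x) = J_{\text{flat}} + J_{\text{loc}} + J_{\text{far}},
$$
where $J_{\text{flat}}$ collects $y \leq x_B(t)$ (on which $m(t,y)=1/(4B)>m(t,x)$, contributing negatively), $J_{\text{loc}}$ collects $|y-x|\leq 1$ (handled by Taylor expansion, with the first-order piece cancelling by antisymmetry in the P.V.\ sense), and $J_{\text{far}}$ collects the remainder. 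The regularity input is that $m$ is $C^{1,1}$ in $x$ with $\partial_x m = (1-2Bw)\partial_x w$ and $|\partial_{xx} m|$ controlled by a computation parallel to Claim \ref{claim:2-1}, giving a bound in terms of $[w(t,x)]^\beta$, $[w(t,x)]^{2\beta-1}$ and explicit powers of $x$.

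For part (a), $s > \tfrac{1}{2}$: on $J_{\text{far}}$ with $y>x$, I will apply the Fundamental Theorem of Calculus to $m(t,x)-m(t,y)$ and use a one-sided comparison $\partial_x m(t,x+\tau z) \geq c\,\partial_x m(t,x)$ derived from the convexity of $w$ together with the monotonicity of $g'(w) = 1-2Bw$ in $w$, in the spirit of Claim \ref{claim:2-5}. This yields
$$
J_{\text{far}} \leq -C\,\partial_x w(t,x)\int_0^\infty \frac{z}{(1+z)^{1+2s}}\,dz,
$$
and the last integral converges precisely because $s > \tfrac{1}{2}$. Combining with the straightforward estimates on $J_{\text{flat}}$ and $J_{\text{loc}}$ (both dominated by $|\partial_x w(t,x)|$ via the second-derivative control) and substituting $\partial_x w(t,x) = v_0'(x)[v_0(x)]^{-\beta}[w(t,x)]^\beta$ produces the claimed bound.

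For part (b), $s \leq \tfrac{1}{2}$: the integral $\int_0^\infty z/(1+z)^{1+2s}\,dz$ diverges, so following Claim \ref{subsolcla1}(b) I will introduce an adjustable cutoff $R>1$ and split $J_{\text{far}}$ into $|y-x|\leq R$ and $|y-x|>R$. The near piece is controlled by $-C\,\partial_x w(t,x)\,R^{1-2s}$ through the convexity argument, while the far piece is controlled by $C/(B R^{2s})$ using $w\leq 1/(2B)$. Optimising $R$ and taking $B$ large absorbs the constant residue coming from $J_{\text{flat}}$ and produces the $-C_4\partial_x w(t,x)$ term in the statement. Meanwhile $J_{\text{loc}}$ carries the truly singular contribution: inserting the dominant term of the $|\partial_{xx} m|$ bound and using the sharp inequality $w(t,x) \leq v_0(x) = d/x^{2s}$ to trade powers of $w$ for powers of $x$, it collapses to $C_4[w(t,x)]^{1-2s+2s\beta} x^{2s(2s\beta-2s-1)}$.

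The main obstacle lies in part (b): reconciling the raw $|\partial_{xx} m|$ bound, whose factors $[w]^{2\beta-1}$ and $[w]^\beta$ do not match the stated $[w]^{1+2s(\beta-1)}$, requires repeated use of the sharp inequality $[w(t,x)]^{1-\beta} \geq [v_0(x)]^{1-\beta}$ together with careful book-keeping of exponents through the cutoff optimisation. Furthermore, since $g$ is concave while $w$ is convex, $\partial_{xx} m$ is not sign-definite, so only absolute-value estimates can enter the Taylor expansion, removing a simplification that was available in the flat-region case treated in Claim \ref{subsolcla1}.
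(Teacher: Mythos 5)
Your decomposition ($J_{\text{flat}}$, $J_{\text{loc}}$, $J_{\text{far}}$) matches the paper's ($I_1$; $I_4$; $I_3$ and $I_5$), and your treatment of part (a) is essentially the paper's: $J_{\text{flat}}\leq 0$ and the negative-$z$ part of $J_{\text{far}}$ are discarded by monotonicity, the positive-$z$ part is handled by the Fundamental Theorem of Calculus plus convexity of $w$ to pull out $-\partial_x w(t,x)$, and $J_{\text{loc}}$ is absorbed via the $|\partial_{xx}m|\lesssim -\partial_x w$ estimate. That part is fine.

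In part (b), however, you misassign the two terms in the target bound, and as written the estimate would not close. First, the term $C_4[w(t,x)]^{1-2s+2s\beta}x^{2s(2s\beta-2s-1)}$ does \emph{not} come from $J_{\text{loc}}$. The local piece $I_4$ is a genuine second-difference integral over $|z|\le 1$ and is bounded, exactly as in part (a), by $C|\partial_{xx}m(t,x)|\lesssim -\partial_x w(t,x)$; it contributes only to the first term $-C_4\partial_x w(t,x)$. Trying to extract the $[w]^{1-2s+2s\beta}x^{\cdots}$ form from the $|\partial_{xx}m|$ bound via $w\le v_0$ does not work --- if you carry the exponents through you will find they do not reduce to $1-2s+2s\beta$ and $2s(2s\beta-2s-1)$. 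Second, and conversely, the extra algebraic term \emph{does} arise from the far-field cutoff optimisation, and here your estimate for the tail is too coarse: you bound the $|z|>R$ piece by $C/(BR^{2s})$ using $w\le 1/(2B)$, which throws away the $x$-dependence. The paper instead keeps $m(t,x)\le w(t,x)$, giving a tail bound $w(t,x)/(2sR^{2s})$; balancing this against the $(1,R)$ contribution $-\partial_x w(t,x)\,R^{1-2s}\sim [w]^\beta x^{2s\beta-2s-1}R^{1-2s}$ and solving for $R$ is precisely what produces $[w(t,x)]^{1+2s(\beta-1)}x^{2s(2s\beta-2s-1)}$. With your uniform $1/(2B)$ bound the optimisation produces a different, $x$-independent power that cannot be dominated by the stated right-hand side. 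Finally, there is no ``constant residue coming from $J_{\text{flat}}$'' to absorb in this region: since $m(t,y)=\frac{1}{4B}\ge m(t,x)$ for $y\le x_B(t)$ and $x\ge x_B(t)+1$, one simply has $J_{\text{flat}}=I_1\le 0$. The $O(B^{-2})$ residue you are thinking of belongs to Claim \ref{subsolcla1}(b), where the base point sits in the flat region and the whole structure of the estimate is different.
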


\begin{proof}
	First, we have
	\begin{eqnarray*}
		(-\Delta)^{s}m(t,x)&=&\textnormal{P.V. }\int_{\mathbb{R}} \frac{m(t,x)-m(t,y)}{|x-y|^{1+2s}} dy\\
		&=&\int_{-\infty}^{x_B(t)}\frac{m(t,x)-m(t,y)}{|x-y|^{1+2s}} dy+\textnormal{P.V. }\int_{x_B(t)}^{+\infty} \frac{m(t,x)-m(t,y)}{|x-y|^{1+2s}} dy\\
		&=:&I_1+I_2
	\end{eqnarray*}
	
	For  $I_1$, since $\partial_xm(t,x)=\partial_xw(t,x)(1-2Bw(t,x))^+$, then $m(t,\cdot)$ is decreasing. Since $x\geq x_B(t)+1$, then
	\begin{eqnarray}
		I_1\leq0.\label{com2-1}
	\end{eqnarray}
	
	For $I_2$, we have
	\begin{align*}
	I_2& =  \textnormal{P.V. } \int_{x_B(t)-x}^{\infty}\frac{m(t,x)-m(t,x+z) }{|z|^{1+2s}}dz \\    
	& =   \int_{x_B(t)-x}^{-\frac{1}{2}}\frac{m(t,x)-m(t,x+z)}{|z|^{1+2s}}dz +\textnormal{P.V. }  \int_{-\frac{1}{2}}^{\frac{1}{2}}\frac{m(t,x)-m(t,x+z) }{|z|^{1+2s}}dz+\int_{\frac{1}{2}}^{+\infty}\frac{m(t,x)-m(t,x+z) }{|z|^{1+2s}}dz\\
	&=: I_3+I_4+I_5.  
	\end{align*}
	
	Since $m(t,\cdot)$ is decreasing and $x\ge x_B(t)+1$,  it's easy to see that
	\begin{eqnarray}
	I_3\leq0.\label{com2-2}
	\end{eqnarray}
	
	For $I_4$, since the function $m(t,\cdot)$ is smooth (at least $C^2$) on  $(x-\frac{1}{2},x+\frac{1}{2})$, then we have 
	$$I_4=\frac{1}{2}  \int_{-\frac{1}{2}}^{\frac{1}{2}}\frac{m(t,x+z)+m(t,x-z)-2m(t,x)}{|z|^{1+2s}}dz \leq C_{4,1}|\partial_{xx}m(t,x)|$$
	
	A direct computation of $\partial_{xx}m(t,x)$ shows that $|\partial_{xx}m(t,x)|\le -C_{4,2}|\partial_xw(t,x)|$, which implies that 
	\begin{eqnarray}
		I_4\leq -C_{4,3}\partial_{x}w(t,x). \label{com2-3}
	\end{eqnarray}
	To complete our proof we need to estimate $I_5$ and  to do so we treat separately the following two situations: $\frac{1}{2}<s<1$, $0<s\le \frac{1}{2}$.
	
	{\bf Case I}: $\frac{1}{2}<s<1$. In this situation, by using the Fundamental Theorem of Calculus, we obtain 
$$I_5=-\int_{\frac{1}{2}}^{+\infty}\int_0^1\frac{z\partial_x w(t,x+\tau z)\left(1 -2Bw(t,x+\tau z)\right)}{|z|^{1+2s}} \,dzd\tau ,$$
which by the convexity and the monotonicity of $w(t,\cdot)$ and since $\frac{1}{2}<s<1$, then yields 
\begin{align*}
I_5&\leq -\int_{\frac{1}{2}}^{\infty}\int_0^1\frac{z}{|z|^{1+2s}}  \partial_x w(t,x+\tau z)\left(1 -2Bw(t,x+\tau z)\right) \,dzd\tau,  \\
&\le  -\partial_x w(t,x)\int_{\frac{1}{2}}^\infty\frac{z}{|z|^{1+2s}}\,dz,\\
&=  -\partial_x w(t,x)\frac{1}{2^{1-2s}(2s-1)}.
\end{align*}

By combining the latter with \eqref{com2-1}, \eqref{com2-2}, and \eqref{com2-3}, we can therefore  find  a  constant $C_4>0$ such that 
\[ (-\Delta)^{s}m(t,x)\le- C_4\partial_xw(t,x)=-C_4 v_0'(x) v_0^{-\beta}(x)w^{\beta}(t,x). \]

{\bf Case II :} $ 0<s\le\frac{1}{2} $. In this situation, again the previous argumentation fails and we argue as follows. For any $R>1$, let us rewrite $I_5$  as follows
\begin{eqnarray*}
	I_5&=&\int_{\frac{1}{2}}^{R}\frac{m(t,x)-m(t,x+z) }{|z|^{1+2s}}dz+\int_{R}^{+\infty}\frac{m(t,x)-m(t,x+z) }{|z|^{1+2s}}dz\\
	&=:&I_6+I_7.
\end{eqnarray*}

An easy computation show that since  $m(t,x)\leq w(t,x)$, and $R>1$ we get 
\begin{eqnarray}
	I_7\le  \int_{R}^{+\infty}\frac{m(t,x) }{|z|^{1+2s}}dz&\leq&  w(t,x)\cdot \int_R^{+\infty} \frac{1}{z^{1+2s}} dz\nonumber\\
	&\le& w(t,x)\cdot \int_R^{+\infty} \frac{1}{z^{1+s}} dz= \frac{1}{s} w(t,x)\cdot  \frac{1}{R^{s}}.\label{com2-4}
\end{eqnarray}

On the other hand by using the Fundamental Theorem of Calculus, we obtain the following for $I_6$
\begin{eqnarray*}
	I_6&=&-\int_{\frac{1}{2}}^{R}\int_0^1\frac{z\partial_x w(t,x+\tau z)\left(1 -2Bw(t,x+\tau z)\right)}{|z|^{1+2s}} \,dzd\tau,
\end{eqnarray*}
and  by using the convexity and the monotonicity of $w(t,\cdot)$,   we then achieve 
\begin{eqnarray}
	I_6&\leq & -\int_{\frac{1}{2}}^{R}\int_0^1\frac{z}{|z|^{1+2s}}  \partial_x w(t,x+\tau z)\left(1 -2Bw(t,x+\tau z)\right) \,dzd\tau,  \nonumber\\
	   &\le  & -\partial_x w(t,x)\int_{\frac{1}{2}}^{R}\frac{z}{|z|^{1+2s}}\,dz, \nonumber\\
	&\le & -\partial_x w(t,x)\left(\int_{\frac{1}{2}}^{1}\frac{1}{z^{2s}}\,dz+\int_{1}^{R}\frac{1}{z^{2s}}\,dz\right),\nonumber\\
	&\le & -\partial_x w(t,x)\left(\int_{\frac{1}{2}}^{1}\frac{1}{z^{2s}}\,dz+\int_{1}^{R}\frac{1}{z^{s}}\,dz\right)\le  -\partial_x w(t,x)\left(C(s)+\frac{1}{s}R^{1-s}\right).\label{com2-5}
\end{eqnarray}


%
%
Combining \eqref{com2-5}  with \eqref{com2-4} and using the definition of $\partial_xw(t,x)$ enforce that
$$
	I_5\leq \frac{1}{s} w(t,x)\cdot  \frac{1}{R^{s}}+C_{6,2}[w(t,x)]^\beta\cdot x^{2s\beta-2s-1}R^{1-s} +C(s)\partial_xw(t,x).
$$

By taking $R$ such that $C_{6,2}[w(t,x)]^\beta\cdot x^{2s\beta-2s-1}R^{1-s}=\frac{1}{s} w(t,x)\cdot  \frac{1}{R^{s}}$, that is,  $R=\frac{1}{s C_{6,2} }\cdot [w(t,x)]^{1-\beta} \cdot x^{-2s\beta+2s+1}$, we then  get 
$$
	I_5 \leq C_{7,2} [w(t,x)]^{1-2s+2s\beta}x^{2s(2s\beta-2s-1)}+ C(s)\partial_xw(t,x),
$$
which combined with  \eqref{com2-1}, \eqref{com2-2} and \eqref{com2-3} then yields
$$
	 (-\Delta)^{s}m(t,x)\le -C_4\partial_xw(t,x)+C_4[w(t,x)]^{1-2s+2s\beta} x^{2s(2s\beta-2s-1)}.
$$

%
%
%

\end{proof}

Lastly, let us estimate $-(-\Delta)^{s}m(t,x)$ in the region  $x_B(t)-1<x<x_B(t)+1$.

\subsubsection*{\underline{$\bullet$ When $x_B(t)-1\le x\le x_B(t)+1$}:}~\\ 
In this last region, we claim that:
\begin{claimd}\label{subsolcla3}
    \begin{enumerate}
    	\item[(a)] If $\frac{1}{2}<s<1$,  then there exists positive constant $C_5$ such that for all $ x_B(t)-1\le x\le x_B(t)+1$, we have
    	$$ (-\Delta)^{s}m(t,x)\le -C_5 v_0'(x) [v_0(x)]^{-\beta}[w(t,x)]^{\beta}.$$ 
    	
    	\item[(b)] If $0<s\le \frac{1}{2}$, for large enough $B\gg1$, then there exists positive constant $C_5$ such that for all $ x_B(t)-1\le x\le x_B(t)+1$, , we have
    	$$ (-\Delta)^{s}m(t,x)\le -C_5\partial_xw(t,x)+C_5[w(t,x)]^{1-2s+2s\beta} x^{2s(2s\beta-2s-1)}.$$ 
    	
    	
    \end{enumerate}

\end{claimd}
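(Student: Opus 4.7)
My plan is to adapt the Claim 2.6 decomposition to the window $[x_B(t)-1,x_B(t)+1]$, leveraging two features: (i) the global $C^{1,1}$ regularity of $m(t,\cdot)$ established in Step two, together with $\partial_x m(t,x_B(t))=\partial_x w(t,x_B(t))(1-2Bw(t,x_B(t)))=0$; and (ii) the comparability of $w(t,x)$, $v_0(x)$, $x$ and $\partial_x w(t,x)$ to their values at $x_B(t)$ up to fixed multiplicative constants, since $|x-x_B(t)|\le 1$ and all these quantities vary at most polynomially. I split
\begin{equation*}
(-\Delta)^s m(t,x) = \int_{-\infty}^{x-1}\!\! \frac{m(t,x)-m(t,y)}{|x-y|^{1+2s}}\,dy \;+\; \mathrm{P.V.}\!\int_{x-1}^{x+1}\!\! \frac{m(t,x)-m(t,y)}{|x-y|^{1+2s}}\,dy \;+\; \int_{x+1}^{+\infty}\!\! \frac{m(t,x)-m(t,y)}{|x-y|^{1+2s}}\,dy =: J_1+J_2+J_3.
\end{equation*}
The left tail $J_1$ is non-positive because $m(t,\cdot)$ is nonincreasing, and the right tail $J_3$ is bounded exactly as $I_5$ (case (a)) or $I_6+I_7$ (case (b)) of Claim 2.6, with $x_B(t)+1$ replaced by $x$ at the cost of the harmless constants from (ii).

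The genuinely new piece is the near-field $J_2$. Although $[x-1,x+1]$ may contain the kink $x_B(t)$ in its interior, the symmetrization
\begin{equation*}
J_2 = -\frac{1}{2}\int_{-1}^{1}\frac{m(t,x+z)+m(t,x-z)-2m(t,x)}{|z|^{1+2s}}\,dz
\end{equation*}
together with the $C^{1,1}$ second-difference bound $|m(t,x+z)+m(t,x-z)-2m(t,x)|\le L(t,x)\,z^2$ for $|z|\le 1$ reduces $J_2$ to $\lesssim L(t,x)\int_0^1 |z|^{1-2s}\,dz$, which is finite for $s<1$. The Lipschitz constant $L(t,x)$ of $\partial_x m(t,\cdot)$ on $[x-1,x+1]$ is controlled by $\sup_{y\in[x_B(t),x_B(t)+1]}|\partial_{xx} m(t,y)|$, which by the formulas of Step two and the regime $\frac{1}{2s(\beta-1)}>1$ is of order $|\partial_x w(t,x)|$. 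This yields the case-(a) bound $(-\Delta)^s m(t,x)\le -C_5\partial_x w(t,x) = -C_5 v_0'(x)[v_0(x)]^{-\beta}[w(t,x)]^\beta$.

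For case (b), $0<s\le \tfrac{1}{2}$, the near-field bound and the right tail $J_3$ are no longer separately sharp enough and must be balanced against each other, exactly as in Claim 2.6(b): I split $J_3$ at a cutoff radius $R=R(B)$ and optimize the choice of $R$, which produces the extra term $C_5[w(t,x)]^{1-2s+2s\beta} x^{2s(2s\beta-2s-1)}$ appearing in the target estimate. Here the comparability of $x$ with $x_B(t)$ is what allows the choice of $R$ made in Claim 2.6(b) to be reused verbatim.

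The principal obstacle is justifying the second-difference bound across the kink at $x_B(t)$. On the left of $x_B(t)$, $\partial_{xx} m\equiv 0$, whereas on the right $\partial_{xx} m(t,x_B(t)^+)=-2B\bigl(\partial_x w(t,x_B(t))\bigr)^2$, which is bounded but non-zero; this jump would spoil a naive $C^2$ Taylor expansion. The rescue is that $\partial_x m(t,x_B(t))=0$, so writing $m(t,x\pm z)-m(t,x)=\int_0^{\pm z}\partial_x m(t,x+\sigma)\,d\sigma$ and using the one-sided bounds on $\partial_{xx} m$ on each component of $(x-1,x+1)\setminus\{x_B(t)\}$ delivers the required quadratic estimate with $L(t,x)\lesssim |\partial_x w(t,x)|$. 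Once this is recorded, the remainder of the argument is a direct transcription of the corresponding steps in Claims 2.5 and 2.6, with $x$ replaced throughout by a point within distance $1$ of $x_B(t)$.
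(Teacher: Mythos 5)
Your decomposition matches the paper's exactly ($J_1\leq 0$ by monotonicity, $J_3$ exactly as in Claim \ref{subsolcla2}), and the overall plan is sound; where you genuinely diverge is the near-field term. The paper writes the symmetrized integrand using $\partial_x m = \partial_x w\cdot(1-2Bw)^+$ and applies the product-difference identity: the piece carrying $\partial_x w(t,x+\tau z)-\partial_x w(t,x-\tau z)$ is non-positive by the convexity of $w$ (so it can be discarded with no estimate at all), and the remaining piece --- the one that actually sees the kink --- is controlled by the Lipschitz continuity of $(1-2Bw)^+$ together with $\partial_x w(t,x-\tau z)\geq \partial_x w(t,x-1)\geq C\partial_x w(t,x)$, yielding the bound $-C\partial_x w(t,x)$ directly. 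You instead take absolute values from the start and invoke the global $C^{1,1}$ second-difference estimate $|m(t,x+z)+m(t,x-z)-2m(t,x)|\leq L z^2$, then verify by explicit computation that $L\lesssim |\partial_x w(t,x)|$. That verification is where the extra work lies: you need both $\partial_{xx}w(t,y)\lesssim |\partial_x w(t,y)|$ and $B[\partial_x w(t,y)]^2\lesssim |\partial_x w(t,y)|$ uniformly over $y$ near $x_B(t)$, $t>0$, and $B$ large; both hold (using $w\lesssim 1/B$ there and $2s\beta-2s-1<0$), but they do rely quantitatively on the standing hypothesis $2s(\beta-1)<1$, whereas the paper's sign argument discards the $\partial_{xx}w$ contribution for free. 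Two small remarks: the $C^{1,1}$ second-difference bound holds for any function with Lipschitz gradient, so the observation $\partial_x m(t,x_B(t))=0$ is not actually needed to cross the kink; and in case (b) the balancing/cutoff is only in $J_3$, while the near-field bound is the same in both cases --- your write-up suggests a coupled balancing, but your implementation is nonetheless the correct one.
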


\begin{proof}
	Again let us rewrite the fractional Laplacian in the following way : 
	\begin{eqnarray*}
		(-\Delta)^{s}m(t,x)&=&\textnormal{P.V. }\int_{\mathbb{R}} \frac{m(t,x)-m(t,y)}{|x-y|^{1+2s}} dy\\
		&=&\int_{-\infty}^{x-1} \frac{m(t,x)-m(t,y)}{|x-y|^{1+2s}} dy+\textnormal{P.V. }\int_{x-1}^{+\infty} \frac{m(t,x)-m(t,y)}{|x-y|^{1+2s}} dy\\
		&=:&I_1+I_2
	\end{eqnarray*}
	
	By using the monotone character of $m(t,\cdot)$, we have 
	\begin{eqnarray}
		I_1\leq0.\label{com3-1}
	\end{eqnarray}
	
	For $I_2$, we have
	\begin{align*}
	I_2& =  \textnormal{P.V. }\int_{-1}^{\infty}\frac{m(t,x)-m(t,x+z)}{|z|^{1+2s}}dz \\    
	& =- \frac{1}{2} \int_{-1}^{1}\frac{m(t,x+z)+m(t,x-z)-2m(t,x)}{|z|^{1+2s}}dz + \int_{1}^{\infty}\frac{m(t,x)-m(t,x+z)}{|z|^{1+2s}}dz \\
	&= I_3+I_4.  
	\end{align*}
	Observe that again to estimate $I_2$ we break the integral into two part and we can easily see that the contribution of $I_4$ can be estimated as in the proof of the previous claim so we won't repeat it. If fact, here the only change  with respect to the situation $x>x_B(t)+1$, is the contribution of $I_3$ since  unlike the previous case, the function $m$ is not any more a $C^2$ smooth function on the domain of integration and we need then more precise estimate. So let us now look more closely at $I_3$.
	  
	For $I_3$, thanks to the definition of $m$, we  can get
	\begin{align*}
	I_3&=-\frac{1}{2} \int_{-1}^1\int_{0}^{1}z\frac{\partial_xm(t,x+\tau z) -\partial_xm(t,x-\tau z)}{|z|^{1+2s}}d\tau  dz\\
	&=-\frac{1}{2}\int_{-1}^1\int_{0}^{1}\frac{z}{|z|^{1+2s}}[\partial_x w(t,x+\tau z)(1-2Bw(t,x+\tau z))^+ -\partial_x w(t,x-\tau z)(1-2Bw(t,x-\tau z))^+ ]\,d\tau  dz
	\end{align*}
	
	Let us rewrite the bracket inside the integral as follows:
	\begin{eqnarray*}
		&& \partial_x w(t,x+\tau z)(1-2Bw(t,x+\tau z))^+ -\partial_x w(t,x-\tau z)(1-2Bw(t,x-\tau z))^+\\
		&=&	[\partial_x w(t,x+\tau z)- \partial_x w(t,x-\tau z)]\cdot (1-2Bw(t,x+\tau z))^+\\
		&& \quad +\, \partial_x w(t,x-\tau z)[(1-2Bw(t,x+\tau z))^+-(1-2Bw(t,x-\tau z))^+ ]
	\end{eqnarray*}
	
	Then we can decompose $I_3$ into two integrals 
	$ I_3=I_5+I_6$
	with 
	\begin{align*}
	&I_5:=\frac{1}{2}\int_{-1}^1\int_{0}^{1}\frac{z}{|z|^{1+2s}}[\partial_x w(t,x+\tau z)- \partial_x w(t,x-\tau z)] (1-2Bw(t,x+\tau z))^+ \,d\tau dz\\
	&I_6:=\frac{1}{2} \int_{-1}^1\int_{0}^{1}\frac{z}{|z|^{1+2s}} \partial_x w(t,x-\tau z)[(1-2Bw(t,x+\tau z))^+-(1-2Bw(t,x-\tau z))^+ ]\,d\tau  dz.
	\end{align*}
	
	Now since $w(t,x)$ is smooth, using the Fundamental Theorem of Calculus, we get 
	\begin{eqnarray*}
		I_5=-\frac{1}{2}\int_{-1}^1\int_{0}^{1}\int_{-1}^1\frac{2\tau z^2}{|z|^{1+2s}}\partial_{xx} w(t,x+\sigma \tau z) (1-2Bw(t,x+\tau z))^+ \,d\sigma d\tau dz.
	\end{eqnarray*}

    Since $w(t,\cdot)$ is convex, then
    \begin{eqnarray*}
    	I_5\leq0. 
    \end{eqnarray*}

	For $I_6$, by using the convexity of $w(t,\cdot)$ and the uniform Lipschitz continuity of the function $(1-2Bw(t,x))^+$ for  $x \in (x_B(t)-1,x_B(t)+1)$, then we have
	\begin{align*}
	I_6&\le -\frac{1}{2} \partial_xw(t,x-1)\int_{-1}^1\int_{0}^{1}\frac{|z|}{|z|^{1+2s}} |(1-2Bw(t,x+\tau z))^+-(1-2Bw(t,x-\tau z))^+ |\,d\tau  dz\\
	&\le -C\partial_xw(t,x-1)\int_{-1}^{1}\frac{z^2}{|z|^{1+2s}}dz \\
	&=-C_{5,2}\partial_x w(t,x-1).
	\end{align*}
	
	A direct computation can give us some $C_{5,1}>0$ such that  $\partial_xw(t,x-1)\ge C_{5,1}\partial_x w(t,x)$, which implies that
	$$I_6\le -C_{5,3}\partial_x w(t,x).$$
	
	Hence
	\begin{eqnarray}
		I_3\leq -C_{5,3}\partial_x w(t,x).\label{com3-2}
	\end{eqnarray}

\end{proof}

By collecting  \eqref{eq-dtm}, {\bf Claim \ref{subsolcla1}}, {\bf Claim \ref{subsolcla2}}, {\bf Claim \ref{subsolcla3}} and \eqref{ac-eq-f}, now we can show that $m(t,x)$ is a subsolution for some appropriate choices of $B$, $\gamma$ and $\epsilon$.

If $\frac{1}{2}<s<1$, by \eqref{eq-dtm}, {\bf Claim \ref{subsolcla1}}, {\bf Claim \ref{subsolcla2}}, {\bf Claim \ref{subsolcla3}} and \eqref{ac-eq-f}, we have
$$
(\partial _t m +(-\Delta)^{s}m-f(m))(t,x) \leq 
 \begin{cases}
-[w(t,x_B(t))]^{\beta}\left[C_0+h(t,x_B(t))\right] &\text{ for }  x \le x_B(t)-1\\
 -[w(t,x)]^{\beta}\left[C_0+h(t,x) -\gamma\right] &\text{ for } x > x_B(t)-1,
\end{cases}
$$
where  $h(t,x)= C_6v_0'(x)[v_0(x)]^{-\beta}$ with $C_6\ge \max\{C_3,C_4,C_5\}$.

We now choose $\gamma\le \frac{C_0}{2}$. In the view of the above inequalities, to complete the construction of the subsolution $m(t,x)$, it suffices to find a condition on $B$ so that $h(t,x)\geq -\frac{C_0}{2}$ for all $t>0$ and all $x\in \R$. From the definitions of $h(t,x)$ and $v_0(x)$, it suffices to achieve
$$
x^{(\beta-1)2s-1}\le  \frac{C_0d^{\beta-1}}{4s dC_6 }, \quad \text{ for all } t>0, x\geq x_B(t)-1.
$$

Since $(\beta-1)2s<1$, this reduces to the following condition on $x_B(0)$:
\begin{equation*} 
x_B(0)\ge \left(\frac{C_0d^{\beta-1}}{4s dC_6}\right)^{\frac{1}{1-2s(\beta-1)}}+1. \label{ac-x0t-ineq}  
\end{equation*}

From \eqref{ac-def-xB} we have $x_B(0)=(2Bd)^{\frac{1}{2s}}$. Hence, in view of the definition of $C_0$, the above inequality holds by selecting $B\geq B_0$, with $B_0>0$ large enough.

If $0<s\le \frac{1}{2}$, when $x\leq x_B(t)-1$, by  \eqref{eq-dtm}, {\bf Claim \ref{subsolcla1}}, and \eqref{ac-eq-f}, we have
\begin{eqnarray*}
	(\partial _t m +(-\Delta)^{s}m-f(m))(t,x)&\leq&\frac{C_3}{B^2}-C_0[w(t,x_B(t))]^{\beta}\\
	&=&\frac{C_3}{B^2}-C_0\left( \frac{1}{2B} \right)^\beta\\
	&=&B^{-2}[C_3-2^{-\beta} C_0B^{2-\beta}   ]
\end{eqnarray*}

Since $\beta<2$, then there exists some $B_1\gg1$ such that $C_3-2^{-\beta} C_0B^{2-\beta} <0$ for all $B\geq B_1$. Hence in this case, we have
\[ (\partial _t m +(-\Delta)^{s}m-f(m))(t,x)<0. \]

When $x>x_B(t)-1$, by  \eqref{eq-dtm}, {\bf Claim \ref{subsolcla2}}, {\bf Claim \ref{subsolcla3}} and \eqref{ac-eq-f}, we have
\begin{eqnarray*}
	&&(\partial _t m +(-\Delta)^{s}m-f(m))(t,x)\\
	&\leq& \gamma [w(t,x)]^{\beta}-C_6\partial_xw(t,x)+C_6[w(t,x)]^{1-2s+2s\beta} x^{2s(2s\beta-2s-1)}-C_0w^{\beta}(t,x)\\
	&=&\gamma [w(t,x)]^{\beta}+C_7[w(t,x)]^\beta\cdot x^{2s\beta-2s-1}+C_6[w(t,x)]^{1-2s+2s\beta} x^{2s(2s\beta-2s-1)}-C_0[w(t,x)]^{\beta}\\
	&=&[w(t,x)]^{\beta}[\gamma+C_7 x^{2s\beta-2s-1}+C_6[w(t,x)]^{1-2s+2s\beta-\beta}x^{2s(2s\beta-2s-1)}  -C_0   ]
\end{eqnarray*}

Take $\gamma=\frac{C_0}{3}$. It's easy to see that $x_B(t)\geq d^{\frac{1}{2s}} (2B)^{\frac{1}{2s}}$. Since $2s\beta-2s-1<0$, so when $B$ is large enough, we have 
\[ C_7 x^{2s\beta-2s-1}\leq \frac{C_0}{3}. \]

Note that since $\beta>1$ and $0<s\le \frac{1}{2}$, we have  $1-2s+2s\beta-\beta\le 0$ and  therefore since $w(t,x)\geq w(0,x)=v_0(x)=\frac{d}{x^{2s}}$, we have 
\begin{eqnarray*}
	C_6[w(t,x)]^{1-2s+2s\beta-\beta}x^{2s(2s\beta-2s-1)} &\leq& C_8 x^{-2s(1-2s+2s\beta-\beta)}\cdot x^{2s(2s\beta-2s-1)}\\
	&=&C_8x^{2s(\beta-2)}
\end{eqnarray*}

Using that  $\beta<2$ and since $x\geq x_B(t)-1\geq d^{\frac{1}{2s}} (2B)^{\frac{1}{2s}}-1  $, so when $B$ is large enough, we have $C_8x^{2s(\beta-2)}\leq \frac{C_0}{3}$, which implies that 
\[C_6[w(t,x)]^{1-2s+2s\beta-\beta}x^{2s(2s\beta-2s-1)} \leq \frac{C_0}{3}.\]

So 
\[ (\partial _t m +(-\Delta)^{s}m-f(m))(t,x)\leq0. \]

In summary, for any $0<s<1$, after some good choices of $\gamma$, $B$ and $\epsilon$, then the function $m(t,x)$ indeed is a subsolution.

\medskip
\noindent {\bf Step three.} It consists in using the subsolution to prove the lower estimate in {\bf Theorem \ref{thm:1-3}}.

Fix $\gamma>0$ and $B_0 >0$ as in the previous step so that $m(t,x)$ is a subsolution. From the comparison principle we get $m(t,x)\le u(t,x)$, for all $t>0$ and $x \in \R$. Recall that $m(t,x_{B_0}(t))=\frac{1}{4B_0}$ and that $u(t,\cdot)$ is nonincreasing (since initial datum $v_0$ is nonincreasing) so that
\begin{equation}
\label{u-grand}
u(t,x)\geq \frac{1}{4B_0}, \quad \forall x\leq x_{B_0}(t).
\end{equation}
In particular, for any $0<\lambda \leq \frac{1}{4B_0}$, the \lq\lq largest'' element $x_\lambda(t)$ of the super level set $\Gamma _\lambda (t)$ has to satisfy
$$
x_\lambda(t)\geq x_{B_0}(t)\geq d^{\frac{1}{\alpha -1}}[\gamma (\beta -1)t]^{\frac{1}{2s(\beta-1)}},
$$
which provides the lower estimate.

It now remains to obtain a similar bound for a given $\frac{1}{4B_0}<\lambda <1$.  Such estimate  can be obtained by redoing the argument in {\bf Section \ref{sec-l}}.

\medskip

\noindent \textbf{Acknowledgement.}
J.Coville  carried out this work in the framework of Archim\`ede Labex (ANR-11-LABX-0033) and of the A*MIDEX project (ANR-11-IDEX-0001-02), funded by the ``Investissements d'Avenir" French Government program managed by the French National Research Agency (ANR). He has also received funding from the ANR DEFI project NONLOCAL (ANR-14-CE25-0013). C. Gui is supported by NSF grants DMS-1601885 and DMS-1901914 and Simons Foundation  Award 617072.  M. Zhao is supported by the National Natural Science Foundation of China No.11801404.
\bibliographystyle{plain}
\bibliography{biblio-asym}

\end{document}